\newtheorem{theorem}{Theorem}[section]   % Base counter on the section
\newtheorem{definition}[theorem]{Definition}  % Shares counter with theorem
\newtheorem{algorithm}[theorem]{Algorithm}  % Shares counter with theorem
\newtheorem{proposition}[theorem]{Proposition}  % Shares counter with theorem
\newtheorem{lemma}[theorem]{Lemma}  % Shares counter with theorem
\newtheorem{example}[theorem]{Example}  % Shares counter with theorem
\newtheorem{remark}[theorem]{Remark}  % Shares counter with theorem
\newtheorem{corollary}[theorem]{Corollary}  % Shares counter with theorem
\newtheorem{conjecture}{Conjecture}
\title{Polynomials in $\mathbf{Z}[x]$ whose divisors are enumerated by $SL_2(\mathbf{N}_0)$}
\author{Anton Shakov}
\date{February 2024}
\begin{document}
\maketitle

\begin{abstract}
We consider a certain left action by the monoid $SL_2(\mathbf{N}_0)$ on the set of divisor pairs $\mathcal{D}_f := \{ (m, n) \in \mathbf{N}_0 \times \mathbf{N}_0 : m \lvert f(n) \}$ where $f \in \mathbf{Z}[x]$ is a polynomial with integer coefficients. We classify all polynomials in $\mathbf{Z}[x]$ for which this action extends to an invertible map $\hat{F}_f: SL_2(\mathbf{N}_0) \rightarrow \mathcal{D}_f$ (Sections~\ref{section_class_statement} and~\ref{section_class_proof}). We call such polynomials \textit{enumerable}. One of these polynomials happens to be $f(n) = n^2 + 1$. It is a well-known conjecture that there exist infinitely many primes of the form $p = n^2 + 1$ (Conjecture~\ref{landau}). In Section~\ref{section_phi0recursions}, we construct a sequence $\mathcal{S}$ on the naturals defined by the recursions

$$
\begin{cases}
\mathcal{S}(4k) = 2\mathcal{S}(2k) - \mathcal{S}(k) \\
\mathcal{S}(4k+1) = 2\mathcal{S}(2k) + \mathcal{S}(2k+1) \\
\mathcal{S}(4k+2) =  2\mathcal{S}(2k+1) + \mathcal{S}(2k) \\
\mathcal{S}(4k+3) =  2\mathcal{S}(2k+1) - \mathcal{S}(k)  \\
\end{cases}
$$

\

with initial conditions $\mathcal{S}(1) = 0$, $\mathcal{S}(2) = 1$, $\mathcal{S}(3) = 1$. 

$$\{ \mathcal{S}(k) \}_{k \in \mathbf{N}} = \{0,1,1,2,3,3,2,3,7,8,5,5,8,7,3, \cdots \}$$

\

$\mathcal{S}$ is shown to have the  properties

\begin{enumerate}
    \item For all $n \in \mathbf{N}_0$, we have $\mathcal{S}(2^n) = \mathcal{S}(2^{n+1} - 1) = n$.
    \item For all $n \in \mathbf{N}_0$, the size of the fiber of $n$ under $\mathcal{S}$ satisfies $|\mathcal{S}^{-1}(\{n\})| = \tau(n^2 + 1)$ where $\tau$ is the divisor counting function.
    \item For all $n \in \mathbf{N}_0$, the integer $n^2 + 1$ is prime if and only if $\mathcal{S}^{-1}(\{n\}) = \{2^n, 2^{n+1} - 1\}$. 
    \item $\mathcal{S}(k)$ is a $2$-regular sequence (as introduced in \cite{q_regular_sequences}).
\end{enumerate}

Sequences with analogous properties are given for all enumerable polynomials in Section~\ref{section_summary}. Informally, these sequences encode the divisor structure of their respective polynomials. Property $(3)$ makes $\mathcal{S}$ amenable to the asymptotic analysis of $k$-regular sequences, an area which has undergone significant development in recent years.
\end{abstract}

\

\

\

\

\

\

\

\

\

\

\tableofcontents

\

\section{Introduction}\label{section_intro}

The set of nonnegative integer matrices with determinant 1, denoted $SL_2(\mathbf{N}_0)$, satisfies the axioms for a \textit{monoid}: 
it contains the usual identity matrix $I$ and is closed under the associative operation of matrix multiplication. We will see that for a small set of polynomials $f \in \mathbf{Z}[x]$, the monoid $SL_2(\mathbf{N}_0)$ intimately describes the divisors of $f$, namely the set of positive integers $d$ such that $d \lvert f(n)$ for some $n \in \mathbf{N}$.

\

An algebraic identity, first studied by Diophantus of Alexandria (c. 200 - c. 298) and which we will refer to as Diophantus' identity is given by

$$(a^2 + b^2)(c^2 + d^2) = (ac+bd)^2 + (ad-bc)^2$$

Diophantus' identity holds in any commutative ring, in particular, the integers $\mathbf{Z}$. It can be taken to mean that the set of sums of two squares of integers is closed under multiplication. Throughout history, it has found many fruitful generalizations. This includes Brahmagupta's Identity, Lagrange's identity, Euler's four-square identity, and Gauss's composition laws for binary quadratic forms.

\

In this paper, we consider Diophantus' identity in the context of the monoid $SL_2(\mathbf{N}_0)$. Taking this view, we can write

$$(a^2 + b^2)(c^2 + d^2) = (ac+bd)^2 + \det \begin{pmatrix} a & b \\
c & d \\
\end{pmatrix}^2$$

Assuming, 

$$\begin{pmatrix} 
a & b \\
c & d \\
\end{pmatrix} \in SL_2(\mathbf{N}_0)$$

yields the factorization 

\begin{equation}\label{connection}
 (ac + bd)^2 + 1 = (a^2 + b^2)(c^2 + d^2)   
\end{equation}

which is apparently a factorization identity for integers of the form $n^2+1$, with $n = ac + bd$.

\

There are still unanswered questions about the divisor structure of $n^2 + 1$ form numbers. Probably the best-known of these is

\begin{conjecture}\label{landau} (Landau's 4th Problem)
There are infinitely many primes of the form $p = n^2 + 1$
\end{conjecture}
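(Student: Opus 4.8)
This is not a theorem awaiting a proof but Landau's fourth problem, one of the oldest open questions in prime number theory, so I state at the outset that I do not expect to settle it; what follows is the shape of the standard attack and the precise point at which every known method stalls.

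The governing heuristic is the Bateman--Horn conjecture. Since $n^2+1$ is irreducible over $\mathbf{Z}$ and admits no fixed prime divisor, the Bunyakovsky hypotheses are met and one predicts
\[
\#\{\, n \le N : n^2+1 \text{ is prime}\,\} \sim \frac{C}{2}\cdot\frac{N}{\log N}, \qquad C = \prod_{p}\frac{p-\omega(p)}{p-1},
\]
where $\omega(p)$ counts the roots of $x^2+1\equiv 0 \pmod p$. The count diverges, so the conjecture should hold. A direct proof would require an asymptotic for $\sum_{n\le N}\Lambda(n^2+1)$, and the usual route to such a sum is a level-of-distribution input: control of $n^2+1$ in arithmetic progressions to moduli as large as $N^{\theta}$ with $\theta$ near $1$, i.e.\ a Bombieri--Vinogradov theorem for this quadratic sequence. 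First I would try to establish such a level of distribution, since it is the ingredient that drives the dispersion method in the closest solved analogues.

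On the sieve side, the strongest unconditional result is Iwaniec's: the linear sieve applied to the sequence $a_n = n^2+1$ (with local densities governed by the Legendre symbol $(-1/p)$) shows that $n^2+1$ is a product of at most two primes for infinitely many $n$. I would begin from this $P_2$ result and attempt to sharpen the sieve so as to isolate the primes themselves, pushing the sieving level $\theta$ toward $1$ while keeping the remainder terms under control.

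The main obstacle is genuine and structural, not merely technical: Selberg's \emph{parity problem}. Sieve weights cannot distinguish integers with an even number of prime factors from those with an odd number, so no purely combinatorial sieve can descend below an almost-prime such as Iwaniec's $P_2$. Breaking parity requires an independent bilinear (Type II) estimate, and here lies the crucial difference from the solved case $a^2+b^4$ of Friedlander and Iwaniec: that sequence has two free variables, which supply exactly the bilinear structure that cancels the parity obstruction, whereas $n^2+1$ is a one-parameter sequence offering no such handle. Within the framework of this paper the equivalent target is Property~(3): one must show that the fiber $\mathcal{S}^{-1}(\{n\})$ consists of exactly the two elements $\{2^n, 2^{n+1}-1\}$ for infinitely many $n$, equivalently that the $2$-regular sequence $\mathcal{S}$ attains its minimal fiber size infinitely often. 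Whether the asymptotic theory of $k$-regular sequences alluded to in the abstract can inject a parity-breaking input where analytic number theory has not is, to my mind, the single genuinely new avenue suggested here, and also the one whose feasibility is hardest to gauge.
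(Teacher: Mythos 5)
This statement is Landau's fourth problem, which the paper records as Conjecture~\ref{landau} precisely because it is open; the paper offers no proof, and your proposal rightly does not claim one, so there is nothing to compare. Your reformulation in terms of the fibers of $\mathcal{S}$ is consistent with the paper's own Remark~\ref{landauremark}, which restates the conjecture as the assertion that infinitely many $n \in \mathbf{N}$ never appear as the second component of a pair in the interior of the $\Phi_0$ tree.
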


Conjecture~\ref{landau} is a special case of what's known as the Bunyakovsky conjecture.

\begin{conjecture}\label{bunyakovsky} (Bunyakovsky Conjecture)
Let $f \in \mathbf{Z}[x]$ satisfy:

\begin{itemize}
    \item $f$ is irreducible in $\mathbf{Z}[x]$
    \item $f$ has a positive leading coefficient
    \item $\gcd(f(0), f(1), f(2), \cdots) = 1$
\end{itemize}

Then there exist infinitely many primes of the form $p = f(n)$.
\end{conjecture}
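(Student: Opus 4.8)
The plan would be to split along the degree of $f$, since the two regimes have completely different status. When $\deg f = 1$, say $f(x) = ax + b$, the three hypotheses force $a > 0$ and $\gcd(a,b) = 1$, so the claim is exactly that the arithmetic progression $b, a+b, 2a+b, \dots$ contains infinitely many primes; I would simply invoke Dirichlet's theorem on primes in arithmetic progressions and be done. All the genuine difficulty is concentrated in the case $\deg f \geq 2$, and here I want to be candid in advance that the statement as written is a celebrated open problem: not a single polynomial of degree $\geq 2$ is known to satisfy it, including $f(n) = n^2 + 1$, which is precisely Landau's Conjecture~\ref{landau}.

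For $\deg f \geq 2$ the natural first step is to pin down the conjectural asymptotic so that one knows what one is aiming at. Writing $\omega(p)$ for the number of solutions of $f(n) \equiv 0 \pmod p$, the local probability that $f(n)$ avoids the factor $p$ is $(1 - \omega(p)/p)$ against a baseline of $(1 - 1/p)$, which leads to the Bateman--Horn prediction $\#\{ n \leq N : f(n) \text{ prime} \} \sim \frac{C_f}{\deg f} \int_2^N \frac{dt}{\log t}$ with $C_f = \prod_p \frac{1 - \omega(p)/p}{1 - 1/p}$. Checking that $C_f$ converges and is strictly positive is the one routine part, and it is exactly the irreducibility and $\gcd$ hypotheses that guarantee $\omega(p) < p$ for all $p$ and hence $C_f > 0$; this confirms the heuristic is nonvacuous but of course proves nothing.

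The strongest rigorous step currently available is a sieve argument. Applying the Selberg sieve or a weighted linear sieve to the sequence $\{f(n)\}_{n \leq N}$, one can show that $f(n)$ is an almost-prime (a product of at most $r$ primes, with $r$ depending on $\deg f$) infinitely often; for $f(n) = n^2 + 1$ the sharpest such result, due to Iwaniec, gives $r = 2$, i.e. infinitely many $n$ for which $n^2 + 1$ is prime or a product of two primes. This is as far as present technology reaches, and it does not isolate the actual primes.

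The main obstacle is therefore not a single step but a structural barrier: the \emph{parity problem} of sieve theory, which prevents any sieve from distinguishing integers with an even number of prime factors from those with an odd number, so the almost-primes above cannot be cut down to genuine primes by sieving alone. Breaking parity has been achieved only for special sparse binary forms (Friedlander--Iwaniec for $a^2 + b^4$, Heath-Brown for $a^3 + 2b^3$) using bilinear-form estimates that have no known analogue for one-variable polynomials. I should be frank that neither the classical toolkit nor the $SL_2(\mathbf{N}_0)$ enumeration and the regular sequence $\mathcal{S}$ developed earlier is expected to surmount this gap; the most one can responsibly assert is the reduction to Dirichlet in degree one together with the almost-prime results in higher degree, while flagging the parity barrier as the true — and still unresolved — obstruction.
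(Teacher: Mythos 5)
The statement you were asked about is labelled as a \emph{conjecture} in the paper, and the paper offers no proof of it --- it is recorded only as context for Landau's problem, of which it is a generalization. Your response is therefore the correct one: you do not claim a proof, and your assessment of the status of the problem is accurate. The degree-one reduction is right (for $f(x) = ax+b$ the hypothesis $\gcd(f(0), f(1), \dots) = 1$ gives $\gcd(b, a+b) = \gcd(a,b) = 1$, so Dirichlet's theorem applies), and your summary of the degree $\geq 2$ situation --- the Bateman--Horn constant being positive precisely because irreducibility and the $\gcd$ condition force $\omega(p) < p$, Iwaniec's $P_2$ result for $n^2+1$, and the parity barrier as the structural obstruction --- is a fair account of what is actually known. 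The one thing worth adding for consistency with this paper: the $SL_2(\mathbf{N}_0)$ enumeration developed here does not claim to attack Bunyakovsky either; its contribution (Remark~\ref{landauremark} and the sequence $\mathcal{S}$) is a reformulation of the $n^2+1$ case as a statement about which integers avoid the interior of the $\Phi_0$ tree, which is a potential new angle rather than a proof. So there is no gap to report --- only the observation that neither you nor the paper proves the statement, nor could either be expected to.
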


Identity~\ref{connection} motivates us to consider the correspondence between factorizations of integers of the form $n^2 + 1$ and the monoid $SL_2(\mathbf{N}_0)$.

A well-known property of $SL_2(\mathbf{N}_0)$ is that it is freely generated by the matrices

$$S := 
\begin{pmatrix}
    1 & 0 \\
    1 & 1 \\
\end{pmatrix}$$

$$T := 
\begin{pmatrix}
    1 & 1 \\
    0 & 1 \\
\end{pmatrix}$$

Hence, $SL_2(\mathbf{N}_0)$ is a free monoid of rank 2 (\cite{nathanson2014forest}, Corollary 1).

\begin{remark}
$S$ and $T$ are infinite order matrices in $SL_2(\mathbf{N}_0)$ and their powers are given by

$$S^\alpha = \begin{pmatrix}
    1 & 0 \\
    \alpha & 1 \\
\end{pmatrix} \text{ and  } \ T^\alpha = \begin{pmatrix}
    1 & \alpha \\
    0 & 1 \\
\end{pmatrix}$$

for all $\alpha \in \mathbf{N}_0$.
\end{remark}

This allows for a binary tree enumeration of $SL_2(\mathbf{N}_0)$ via multiplication by $S$ (left child) and $T$ (right child) beginning with the $2 \times 2$ identity matrix, $I$. The first 4 rows will be

$$\begin{tikzpicture}[level distance=15mm,
level 1/.style={sibling distance=42mm},
level 2/.style={sibling distance=20mm},
level 3/.style={sibling distance=10mm}]
\node {$I$}
child {
node {$S$}
child {
node {$S^2$}
child {node {$S^3$}}
child {node {$TS^2$}}
}
child {
node {$TS$}
child {node {$STS$}}
child {node {$T^2S$}}
}
}
child {
node {$T$}
child {
node {$ST$}
child {node {$S^2T$}}
child {node {$TST$}}
}
child {
node {$T^2$}
child {node {$ST^2$}}
child {node {$T^3$}}
}
} ;
\end{tikzpicture}$$

We compute the explicit matrix products with the usual convention that matrix multiplication is done right to left.

$$\begin{tikzpicture}[level distance=15mm, 
                    level 1/.style={sibling distance=50mm},
                    level 2/.style={sibling distance=25mm},
                    level 3/.style={sibling distance=13mm}]
  \node {\(\begin{pmatrix}1 & 0 \\ 0 & 1\end{pmatrix}\)}
    child {
      node {\(\begin{pmatrix}1 & 0 \\ 1 & 1\end{pmatrix}\)} 
      child {
        node {\(\begin{pmatrix}1 & 0 \\ 2 & 1\end{pmatrix}\)}
        child {node {\(\begin{pmatrix}1 & 0 \\ 3 & 1\end{pmatrix}\)}}
        child {node {\(\begin{pmatrix}3 & 1 \\ 2 & 1\end{pmatrix}\)}}
      }
      child {
        node {\(\begin{pmatrix}2 & 1 \\ 1 & 1\end{pmatrix}\)}
        child {node {\(\begin{pmatrix}2 & 1 \\ 3 & 2\end{pmatrix}\)}}
        child {node {\(\begin{pmatrix}3 & 2 \\ 1 & 1\end{pmatrix}\)}}
      }
    }
    child {
      node {\(\begin{pmatrix}1 & 1 \\ 0 & 1\end{pmatrix}\)}
      child {
        node {\(\begin{pmatrix}1 & 1 \\ 1 & 2\end{pmatrix}\)}
        child {node {\(\begin{pmatrix}1 & 1 \\ 2 & 3\end{pmatrix}\)}}
        child {node {\(\begin{pmatrix}2 & 3 \\ 1 & 2\end{pmatrix}\)}}
      }
      child {
        node {\(\begin{pmatrix}1 & 2 \\ 0 & 1\end{pmatrix}\)}
        child {node {\(\begin{pmatrix}1 & 2 \\ 1 & 3\end{pmatrix}\)}}
        child {node {\(\begin{pmatrix}1 & 3 \\ 0 & 1\end{pmatrix}\)}}
      } 
    };
\end{tikzpicture}$$

\begin{remark}\label{remark_interior}
The set of matrices which are on the "interior" of the tree, i.e. matrices with all nonzero components, is precisely the semigroup $SL_2(\mathbf{N})$.
\end{remark}

If we apply the mapping $SL_2(\mathbf{N}_0) \ni A \rightarrow (a^2 + b^2, ac + bd)$ to this matrix tree we get the tree of nonnegative integer pairs

$$
\begin{tikzpicture}[level distance=15mm, 
                    level 1/.style={sibling distance=50mm},
                    level 2/.style={sibling distance=25mm},
                    level 3/.style={sibling distance=13mm}]
  \node {\((1, 0)\)}
    child {
      node {\((1, 1)\)} 
      child {
        node {\((1, 2)\)}
        child {node {\((1, 3)\)}}
        child {node {\((10, 7)\)}}
      }
      child {
        node {\((5, 3)\)}
        child {node {\((5, 8)\)}}
        child {node {\((13, 5)\)}}
      }
    }
    child {
      node {\((2, 1)\)}
      child {
        node {\((2, 3)\)}
        child {node {\((2, 5)\)}}
        child {node {\((13, 8)\)}}
      }
      child {
        node {\((5, 2)\)}
        child {node {\((5, 7)\)}}
        child {node {\((10, 3)\)}}
      } 
    };
\end{tikzpicture}
$$

As a result of the identity $(ac + bd)^2 + 1 = (a^2 + b^2)(c^2 + d^2)$, we see that the first pair component $a^2 + b^2$ corresponds to a divisor of $n^2 + 1$ where $n = ac + bd$, or the second component of the pair $(a^2 + b^2, ac + bd)$. In Section~\ref{section_class_statement}, we clarify this connection and discuss which polynomials in $\mathbf{Z}[x]$, besides $n^2 + 1$, have this relationship to $SL_2(\mathbf{N}_0)$. 

\

We will prove in Section~\ref{section_class_proof} that the mapping $A \rightarrow (a^2 + b^2, ac + bd)$ is invertible with the codomain taken to be all pairs of nonnegative integers $(m, n)$ such that $m \lvert (n^2 + 1)$ (recall that $a \lvert b$ is the relation "$a$ divides $b$"). We will also prove that analogous mappings arising from other polynomials are invertible. 

\

In Section~\ref{section_examples}, we look at several clarifying examples and discuss a few properties of divisor pair trees.

\

In Section~\ref{section_phi0recursions} we specifically study the divisor pair tree given above and derive an integer sequence $\mathcal{S}$ which is shown to generate this tree.

\

In Section~\ref{section_summary}, we summarize our findings.

\section{Statement of classification theorem}\label{section_class_statement}

We state the following generalizations of Diophantus' identity. Let $\beta \in \mathbf{Z}$.

\[(a^2 + \beta ab + b^2)(c^2 + \beta cd + d^2) = (ac + \beta bc + bd)^2 + \beta (ac + \beta bc + bd)(ad-bc) + (ad - bc)^2\]
\[(a^2 + \beta ab - b^2)(c^2 + \beta cd - d^2) = (ac + \beta bc - bd)^2 + \beta (ac + \beta bc - bd)(ad-bc) - (ad - bc)^2\]

\

Just as in the case of Diophantus' identity, we can take this to mean that the sets $\{a^2 + \beta ab + b^2 : a, b \in \mathbf{Z} \}$ and $\{a^2 + \beta ab - b^2 : a, b \in \mathbf{Z} \}$ are closed under multiplication for all $\beta \in \mathbf{Z}$. We recover Diophantus' identity by setting $\beta = 0$ in the former identity. We again impose a restriction on the integers $a, b, c,$ and $d$ by assuming the matrix

$$\begin{pmatrix}
a & b \\
c & d \\
\end{pmatrix} \in SL_2(\mathbf{N}_0)$$ 

\

Since we know the determinant $ad - bc = 1$ we get

\[ (a^2 + \beta ab + b^2)(c^2 + \beta cd + d^2) = (ac + \beta bc + bd)^2 + \beta (ac + \beta bc + bd) + 1\]
\[(a^2 + \beta ab - b^2)(c^2 + \beta cd - d^2) = (ac + \beta bc - bd)^2 + \beta (ac + \beta bc - bd) - 1\]

\

These are now factorization identities for polynomials of the form

\begin{definition}\label{lilphiandpsi}
$\phi_\beta (n) := n^2 + \beta n + 1$, $\psi_\beta (n) := n^2 + \beta n - 1$. 
\end{definition} 

\begin{definition}
Let $f \in \mathbf{Z}[x]$. We define the divisor pair set of $f$ to be $\mathcal{D}_f := \{ (m, n) \in \mathbf{N}_0 \times \mathbf{N}_0 : m \lvert f(n) \}$. 
\end{definition}

\begin{definition}
Define the family of functions $\Phi = (\Phi_\beta)_{\beta \in \mathbf{N}_0}$ indexed by $\beta$ such that each $\Phi_\beta: SL_2(\mathbf{N}_0) \rightarrow \mathcal{D}_{\phi_\beta(n)}$ is given by

$$\Phi_\beta \begin{pmatrix}
    a & b \\
    c & d \\
\end{pmatrix} = (a^2 + \beta ab + b^2, ac + \beta bc + bd)$$
\end{definition}

Each $\Phi_{\beta}$ is a well-defined function since $(a^2 + \beta ab + b^2, ac + \beta bc + bd) \in \mathbf{N}_0 \times \mathbf{N}_0$ and we saw that

$$(a^2 + \beta ab + b^2) \ \lvert \ \phi_\beta(ac + \beta bc + bd)$$

\

Divisor pair sets allow us to investigate the matrix-factorization correspondence suggested by Diophantus' identity. For a given $\beta \in \mathbf{N}_0$, we ask: is $\Phi_\beta$ one-to-one? Is it onto? 

\

Note that the mapping we discussed at the end of Section~\ref{section_intro}, namely $SL_2(\mathbf{N}_0) \ni A \rightarrow (a^2 + b^2, ac + bd)$, is the function $\Phi_0$ with the codomain $\mathcal{D}_{\phi_0}$.

\

Before proceeding, we also define a family of functions for polynomials of the form $\psi_\beta(n)$.

\begin{definition}\label{Psifamily}
We define the family of functions $\Psi = (\Psi_\beta)_{\beta \in \mathbf{N}_0}$ indexed by $\beta$ such that each $\Psi_\beta : SL_2(\mathbf{N}_0) \rightarrow \mathcal{D}_{\psi_\beta(n)}$ is given by
$$\Psi_\beta \begin{pmatrix}
    a & b \\
    c & d \\
\end{pmatrix} = 
(\max \{a, b\}^2 + \beta ab - \min \{a, b \}^2,  \ \max \{ac, bd \} + \beta bc - \min \{ac, bd\})$$
\end{definition}

We show that the functions in this family are well-defined.

\begin{proof}
Clearly, both pair components are nonnegative at all matrices $A \in SL_2(\mathbf{N}_0)$. It remains to check that $\max \{a, b\}^2 + \beta ab - \min \{a, b \}^2$ divides $\psi_\beta(\max \{ac, bd \} + \beta bc - \min \{ac, bd\})$.

\

Since we're working in $SL_2(\mathbf{N}_0)$, the determinant $ad - bc = 1$ which means $a \geq b$ if and only if $c \geq d$, for all matrices except $I$ (this can easily be checked by assuming the contrary). Suppose $a = \max \{ a, b \}$. Then, $ac = \max \{ ac, bd \}$ and the conclusion follows from 
\[(a^2 + \beta ab - b^2)(c^2 + \beta cd - d^2) = (ac + \beta bc - bd)^2 + \beta (ac + \beta bc - bd) - 1\]
Otherwise, suppose $b = \max \{ a, b \}$. Then, $bd = \max \{ ac, bd \}$. The conclusion follows from the same identity with the variables renamed
\[(b^2 + \beta ab - a^2)(d^2 + \beta cd - c^2) = (bd + \beta bc - ac)^2 + 2(bd + \beta bc - ac)(ad - bc) - (ad-bc)^2\]

\end{proof}

\begin{definition}
Let $c: SL_2(\mathbf{N}_0) \rightarrow SL_2(\mathbf{N}_0)$ be an involution on $SL_2(\mathbf{N}_0)$ given by

\

$$c: \begin{pmatrix}
    a & b \\
    c & d \\
\end{pmatrix} \rightarrow 
\begin{pmatrix} 
d & c \\ 
b & a \\
\end{pmatrix}
$$

\

We call $c(A)$ the complement of $A \in SL_2(\mathbf{N}_0)$.
\end{definition}

\begin{remark}
$c(A)$ can also be written as an outer automorphism on $SL_2(\mathbf{N}_0)$

$$\begin{pmatrix}
    0 & 1 \\
    1 & 0 \\
\end{pmatrix}\begin{pmatrix}
    a & b \\
    c & d \\
\end{pmatrix}
\begin{pmatrix}
    0 & 1 \\
    1 & 0 \\
\end{pmatrix} = 
\begin{pmatrix} 
d & c \\ 
b & a \\
\end{pmatrix}
$$

where $\begin{pmatrix}
    0 & 1 \\
    1 & 0 \\
\end{pmatrix}$ has determinant $-1$.

\end{remark}

\

\begin{proposition}\label{refl}
Suppose $A = S^{\alpha_k} T^{\alpha_{k-1}} \cdots T^{\alpha_1}S^{\alpha_0} \in SL_2(\mathbf{N}_0)$ with each $\alpha_i \in \mathbf{N}_0$. Then $c(A) = T^{\alpha_k} S^{\alpha_{k-1}} \cdots S^{\alpha_1}T^{\alpha_0}$.
\end{proposition}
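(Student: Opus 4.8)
The plan is to recognize the complement map $c$ as conjugation by the swap matrix and then exploit the fact that this conjugation exchanges the free generators $S$ and $T$. Write $J := \begin{pmatrix} 0 & 1 \\ 1 & 0 \end{pmatrix}$, so that $J^2 = I$ and, by the preceding remark, $c(A) = JAJ$ for every $A \in SL_2(\mathbf{N}_0)$. Because the word representation of $A$ is given to us in the hypothesis, I will not need freeness of the monoid; it suffices to push the conjugation through the given product.

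First I would observe that the map $X \mapsto JXJ$ is multiplicative on the monoid of all $2 \times 2$ integer matrices: since $J^2 = I$, we have $(JXJ)(JYJ) = JX(J^2)YJ = J(XY)J$ for all $X$ and $Y$. Next, a direct one-line computation gives $JSJ = T$ and $JTJ = S$. Combined with multiplicativity, this immediately yields, for every exponent $\alpha \in \mathbf{N}_0$, the relations $J S^{\alpha} J = (JSJ)^{\alpha} = T^{\alpha}$ and $J T^{\alpha} J = (JTJ)^{\alpha} = S^{\alpha}$.

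Finally I would apply $c$ to $A = S^{\alpha_k} T^{\alpha_{k-1}} \cdots T^{\alpha_1} S^{\alpha_0}$, inserting a factor $J^2 = I$ between each pair of consecutive powers so that each block is individually conjugated:
$$c(A) = J S^{\alpha_k} T^{\alpha_{k-1}} \cdots T^{\alpha_1} S^{\alpha_0} J = (J S^{\alpha_k} J)(J T^{\alpha_{k-1}} J) \cdots (J T^{\alpha_1} J)(J S^{\alpha_0} J) = T^{\alpha_k} S^{\alpha_{k-1}} \cdots S^{\alpha_1} T^{\alpha_0},$$
which is exactly the claimed expression.

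As for the main obstacle, there is little of substance once the remark is invoked: the entire content is the pair of identities $JSJ = T$, $JTJ = S$ together with the telescoping of the interleaved copies of $J$. The only points deserving genuine care are that $J \notin SL_2(\mathbf{N}_0)$ (which is why the remark must phrase this as an \emph{outer} automorphism rather than an inner one), and the bookkeeping ensuring that exactly one $J$ lands on each side of every generator block so that the alternating $S/T$ pattern is cleanly swapped without disturbing the exponents $\alpha_i$ or their order.
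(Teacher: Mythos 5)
Your proof is correct and follows essentially the same route as the paper: both arguments rest on the two facts that $c$ swaps $S$ and $T$ and that $c(AB)=c(A)c(B)$, then apply this across the given word. The only cosmetic difference is that you derive multiplicativity from the conjugation formula $c(A)=JAJ$ with $J^2=I$, whereas the paper asks the reader to verify $c(AB)=c(A)c(B)$ directly; the content is identical.
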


\begin{proof}
First, note that $c(S) = T$ and $c(T) = S$. One can then manually verify that for any matrices $A$ and $B$, we have $c(AB) = c(A)c(B)$.
\end{proof}

Note that Proposition~\ref{refl} can be interpreted as saying that $c$ is a  reflection across the $SL_2(\mathbf{N}_0)$ enumeration tree since the left child matrix, $S$ is swapped with the right child matrix, $T$.

\begin{proposition}\label{depmatrix}
For all $A \in SL_2(\mathbf{N}_0)$, we have $TA = c(S(c(A)))$ 
\end{proposition}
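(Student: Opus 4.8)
The plan is to reduce the identity to the two facts already established about the complement map $c$ in Proposition~\ref{refl}: that $c$ is a homomorphism satisfying $c(AB) = c(A)c(B)$, and that on generators it acts by $c(S) = T$ and $c(T) = S$. I would also use that $c$ is an involution, $c(c(A)) = A$, which is built into its definition — it is conjugation by the symmetric permutation matrix $J = \begin{pmatrix} 0 & 1 \\ 1 & 0 \end{pmatrix}$, and $J^2 = I$. Together these make the claim almost immediate.

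The key step is to apply the homomorphism property to the product $S \cdot c(A)$, writing
$$c\bigl(S\,c(A)\bigr) = c(S)\,c\bigl(c(A)\bigr).$$
Substituting $c(S) = T$ and the involution identity $c(c(A)) = A$ then yields $c\bigl(S\,c(A)\bigr) = T A$, which is exactly the assertion. So the entire argument is a one-line consequence of Proposition~\ref{refl}.

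As an independent sanity check I would verify the identity by direct computation on a generic $A = \begin{pmatrix} a & b \\ c & d \end{pmatrix}$: forming $S\,c(A) = \begin{pmatrix} d & c \\ b+d & a+c \end{pmatrix}$ and taking its complement returns $\begin{pmatrix} a+c & b+d \\ c & d \end{pmatrix}$, which is visibly $TA$. The only place an error could enter is the orientation of the composition law — if $c$ were an anti-homomorphism the factors would reverse and the identity would break — so the main (and essentially only) obstacle is conceptual: being certain that $c$ composes in the forward order $c(AB) = c(A)c(B)$. Since conjugation by a fixed matrix is always an automorphism, this is guaranteed, and no genuine computational difficulty remains.
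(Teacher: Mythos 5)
Your argument is correct and is essentially the paper's own proof read in the opposite direction: the paper writes $TA = c(c(TA)) = c(c(T)\,c(A)) = c(S\,c(A))$, while you expand $c(S\,c(A)) = c(S)\,c(c(A)) = TA$, using exactly the same three ingredients (the homomorphism property $c(AB)=c(A)c(B)$, the generator swap $c(S)=T$, and the involution $c\circ c = \mathrm{id}$). Your explicit matrix verification is a harmless extra check; no gap.
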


\begin{proof}

$$TA = c(c(TA)) = c(c(T) c(A)) = c(Sc(A))$$

One may also grasp this fact geometrically by viewing $c$ as a reflection across the matrix tree generated by $S$ and $T$.
\end{proof}

$$\begin{tikzpicture}[level distance=15mm,
level 1/.style={sibling distance=42mm},
level 2/.style={sibling distance=20mm},
level 3/.style={sibling distance=10mm}]
\node {$I$}
child {
node {$S$}
child {
node {$S^2$}
child {node {$S^3$}}
child {node {$TS^2$}}
}
child {
node {$TS$}
child {node {$STS$}}
child {node {$T^2S$}}
}
}
child {
node {$T$}
child {
node {$ST$}
child {node {$S^2T$}}
child {node {$TST$}}
}
child {
node {$T^2$}
child {node {$ST^2$}}
child {node {$T^3$}}
}
} ;
\end{tikzpicture}$$

\begin{proposition}\label{uniqueness of factors}
For all $A \in SL_2(\mathbf{N}_0)$, if $A \neq I$ then exactly one of the following conditions holds: 
\begin{itemize}
    \item $A = SB$ for some $B \in SL_2(\mathbf{N}_0)$
    \item $c(A) = SB$ for some $B \in SL_2(\mathbf{N}_0)$.
\end{itemize}
\end{proposition}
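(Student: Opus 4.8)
The plan is to exploit the freeness of $SL_2(\mathbf{N}_0)$ as a monoid on the generators $S$ and $T$. Since $A \neq I$, its unique factorization into $S$'s and $T$'s is nonempty and therefore begins with a well-defined leftmost letter, which is either $S$ or $T$. The whole proposition reduces to the observation that the statement ``$A = SB$ for some $B$'' is equivalent to ``the word for $A$ begins with $S$,'' together with the fact that applying $c$ swaps the leftmost letter between $S$ and $T$.

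First I would establish the translation between each bulleted condition and the leftmost letter. If $A = SB$, then by uniqueness of the free factorization the word for $A$ is $S$ followed by the word for $B$, so it begins with $S$; conversely, if the word for $A$ begins with $S$, stripping that letter exhibits $A = SB$ with $B$ the matrix of the remaining word. The identical equivalence applies verbatim to $c(A)$. Thus the two alternatives read, respectively, ``$A$ begins with $S$'' and ``$c(A)$ begins with $S$.''

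Next I would use that $c$ is a monoid homomorphism with $c(S) = T$ and $c(T) = S$, as recorded in the proof of Proposition~\ref{refl} (concretely $c(X) = JXJ$ with $J = \begin{pmatrix} 0 & 1 \\ 1 & 0 \end{pmatrix}$ satisfying $J^2 = I$, whence $c(XY) = JXJJYJ = c(X)c(Y)$). Because $c$ then acts letterwise on the free word, the leftmost letter of $c(A)$ is exactly $c$ applied to the leftmost letter of $A$. Hence $A$ begins with $S$ if and only if $c(A)$ begins with $T$, that is, if and only if $c(A)$ does \emph{not} begin with $S$.

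Finally I would assemble the dichotomy: since the leftmost letter of $A$ is exactly one of $S$ or $T$, precisely one of ``$A$ begins with $S$'' and ``$c(A)$ begins with $S$'' can hold, which is the desired conclusion. The only point requiring care — and the step I would flag as the crux rather than a genuine obstacle — is the appeal to freeness to guarantee that a word beginning with $T$ can never be rewritten as $SB$. This is where uniqueness of factorization, i.e. the rank-$2$ freeness cited from \cite{nathanson2014forest}, does the essential work, simultaneously ruling out overlap (exactness) and ensuring that at least one alternative occurs.
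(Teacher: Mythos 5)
Your proof is correct and takes essentially the same route as the paper: the paper's one-line argument also reduces the claim to Proposition~\ref{refl} (that $c$, being a homomorphism with $c(S)=T$ and $c(T)=S$, swaps the leading letter of the free word for $A$), so that exactly one of $A$, $c(A)$ begins with $S$. You simply make explicit the appeal to freeness that the paper leaves implicit.
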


\begin{proof}
By Proposition~\ref{refl} exactly one of $A$ or $c(A)$ begins with $S$ (the other one has to begin with $T$).
\end{proof}

\begin{definition}\label{c_bar_f}
For a polynomial $f \in \mathbf{Z}[x]$, we define the involution $\bar{c}_f(m, n) := (\frac{|f(n)|}{m}, n)$. We call $\bar{c}_f$ the complement on $\mathcal{D}_f$ since it sends $m$ to its complementary factor. Note that $\bar{c}_f(m, n)$ is well-defined whenever $m \neq 0$.
\end{definition}

\begin{definition}
For a polynomial $f \in \mathbf{Z}[x]$, we define the left monoid action $(\cdot): SL_2(\mathbf{N}_0) \times \mathcal{D}_f \rightarrow \mathcal{D}_f$ satisfying

$$S  \ \cdot  \ (m, n) = (m, m+n)$$
$$T  \cdot (m, n) = \bar{c}_f (S  \cdot  \bar{c}_f(m, n))$$
\end{definition}

\begin{definition}
We say $F: SL_2(\mathbf{N}_0) \rightarrow \mathcal{D}_f$ is equivariant if for all $A \in SL_2(\mathbf{N}_0)$ we have

$$F(SA) = S \cdot F(A)$$
$$F(TA) = T \cdot F(A)$$

We will use the shorthand notations $\bar{S}(m, n) := S \cdot (m, n)$ and $\bar{T}(m, n) := T \cdot (m, n)$.
\end{definition}

Throughout this paper, we use bar $ \ \bar{} \ $ to distinguish between functions on matrices and functions on pairs. A bar indicates that we are talking about functions on pairs. We will also write $\bar{T}_f$ when we wish to specify the polynomial $f$ from which $\bar{c}_f$ and therefore $\bar{T}_f$ is derived.  

\begin{proposition}\label{invariance}
For all $f \in \mathbf{Z}[x]$ the set $\mathcal{D}_f$ is mapped to itself by the functions $\bar{S}$, $\bar{c}_f$, and $\bar{T}$ meaning that if $(m, n) \in \mathcal{D}_f$ then $\bar{S}(m, n) \in \mathcal{D}_f$, $\bar{c}_f(m, n) \in \mathcal{D}_f$, and $\bar{T}(m, n) \in \mathcal{D}_f$.
\end{proposition}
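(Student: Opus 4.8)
The plan is to handle the three maps in the order $\bar{S}$, $\bar{c}_f$, and then $\bar{T}$, since the defining formula $\bar{T}(m,n) = \bar{c}_f(\bar{S}(\bar{c}_f(m,n)))$ exhibits $\bar{T}$ as a composition of the first two maps. Thus the third case will follow immediately once the first two are established, and the only real content lies in $\bar{S}$ and $\bar{c}_f$.

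The invariance of $\bar{S}$ rests on a single elementary fact about integer polynomials: if $a \equiv b \pmod{m}$ then $f(a) \equiv f(b) \pmod{m}$, which follows by writing $f(x) = \sum_i c_i x^i$ and using that $(a - b) \mid (a^i - b^i)$ termwise. Applying this with $a = m + n$ and $b = n$ gives $f(m+n) \equiv f(n) \pmod{m}$. First I would take $(m,n) \in \mathcal{D}_f$, so that $m \mid f(n)$; combining with the congruence yields $m \mid f(m+n)$, i.e. $\bar{S}(m,n) = (m, m+n) \in \mathcal{D}_f$.

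For $\bar{c}_f$, I would argue directly from the definition of the complementary divisor. Given $(m,n) \in \mathcal{D}_f$ with $m \neq 0$, the quantity $|f(n)|/m$ is a nonnegative integer because $m \mid f(n)$, and since $(|f(n)|/m)\cdot m = |f(n)|$ we get $(|f(n)|/m) \mid |f(n)|$, hence $(|f(n)|/m) \mid f(n)$. Therefore $\bar{c}_f(m,n) = (|f(n)|/m,\, n) \in \mathcal{D}_f$.

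With these two cases in hand, the invariance of $\bar{T}$ follows by composition: starting from $(m,n) \in \mathcal{D}_f$, each of the three successive applications of $\bar{c}_f$, $\bar{S}$, $\bar{c}_f$ lands back in $\mathcal{D}_f$. The point requiring care is well-definedness rather than the divisibility itself: $\bar{c}_f$ is only defined when the first component is nonzero, so I would check that the intermediate $\bar{S}$ step does not destroy this (it preserves the first component $|f(n)|/m$), and treat the degenerate case $f(n) = 0$ separately, where the complementary factor becomes $0$ and the chain must be interpreted accordingly. I expect this bookkeeping around zero entries to be the only genuine obstacle; all of the divisibility content of the proposition is carried by the polynomial congruence and by the elementary property of complementary divisors.
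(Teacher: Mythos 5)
Your argument is correct, and it is exactly the routine verification the paper has in mind: the paper explicitly omits the proof of Proposition~\ref{invariance} as ``little more than writing out definitions,'' and your write-up supplies precisely those details (the congruence $f(n+m)\equiv f(n) \pmod{m}$ for $\bar{S}$, the complementary-divisor observation for $\bar{c}_f$, and composition for $\bar{T}$), together with an appropriate caveat about the first component vanishing.
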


We omit the proof of Proposition~\ref{invariance} as it involves little more than writing out definitions.

\

To summarize, $\bar{S}$ doesn't depend on $f$, has infinite order, and preserves the first pair component $m$. The complement $\bar{c}_f$ has order 2, depends on $f$, and preserves the second pair component $n$. $\bar{T}$ depends on $f$, has infinite order, and doesn't preserve either pair component.

\begin{proposition}\label{prop_equivariant}
Let $F: SL_2(\mathbf{N}_0) \rightarrow \mathcal{D}_f$ be a mapping satisfying

$$F(SA) = \bar{S}\left(F(A)\right)$$
$$F\left(c(A)\right) = \bar{c}_f\left(F(A)\right)$$

for all $A \in SL_2(\mathbf{N}_0)$. Then $F$ is equivariant.
\end{proposition}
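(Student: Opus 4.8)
The plan is to observe that one of the two equivariance conditions, namely $F(SA) = \bar{S}(F(A))$, is granted verbatim as a hypothesis, so the entire task reduces to deriving the $T$-condition $F(TA) = \bar{T}(F(A))$ from the two given relations. The key structural input is Proposition~\ref{depmatrix}, which expresses left multiplication by $T$ in terms of left multiplication by $S$ conjugated by the complement involution: $TA = c(S(c(A)))$. On the side of pairs, the definition of the left action gives the mirror relation $\bar{T} = \bar{c}_f \circ \bar{S} \circ \bar{c}_f$, since by definition $T \cdot (m,n) = \bar{c}_f(\bar{S}(\bar{c}_f(m,n)))$.

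First I would rewrite $F(TA)$ using Proposition~\ref{depmatrix} as $F(c(S(c(A))))$, and then peel off the operations one at a time by applying the two hypotheses in the correct order. Applying the complement hypothesis $F(c(B)) = \bar{c}_f(F(B))$ with $B = S(c(A))$ gives $F(TA) = \bar{c}_f(F(S(c(A))))$. Applying the $S$-hypothesis $F(SB) = \bar{S}(F(B))$ with $B = c(A)$ peels off the $S$ to yield $\bar{c}_f(\bar{S}(F(c(A))))$. A final application of the complement hypothesis to $F(c(A))$ produces $\bar{c}_f(\bar{S}(\bar{c}_f(F(A))))$.

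To finish, I would recognize the resulting composition $\bar{c}_f \circ \bar{S} \circ \bar{c}_f$ applied to $F(A)$ as exactly $\bar{T}(F(A))$, by the definition of the left action of $T$. Together with the hypothesis $F(SA) = \bar{S}(F(A))$, this establishes both equivariance identities and hence that $F$ is equivariant. I expect no substantive obstacle here: the argument is essentially a bookkeeping exercise in conjugation, and the only points requiring care are matching the order of composition on the matrix side (where $c \circ S \circ c$ acts on $A$) against the order on the pair side (where $\bar{c}_f \circ \bar{S} \circ \bar{c}_f$ acts on $F(A)$), and the implicit observation, guaranteed by Proposition~\ref{invariance}, that each intermediate argument $c(A)$, $S c(A)$, and their images under $F$ stays within the relevant domain so that every step is legitimate.
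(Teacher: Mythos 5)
Your proof is correct and follows essentially the same route as the paper's (one-line) proof: both rest on the identity $\bar{T} = \bar{c}_f \circ \bar{S} \circ \bar{c}_f$ together with $TA = c(S(c(A)))$ from Proposition~\ref{depmatrix}, and your version simply makes explicit the peeling-off steps that the paper leaves implicit.
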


\begin{proof}
Since $\bar{T}(m, n) = \bar{c}_f\left(\bar{S}\left(\bar{c}_f(m, n)\right)\right)$.
\end{proof}

It is easier to use Proposition~\ref{prop_equivariant} to check that a map $F$ is equivariant than to proceed straight from the definition.

\begin{proposition}
Each function in the families $\Phi = (\Phi_\beta)_{\beta \in \mathbf{N}_0}$ and $\Psi = (\Psi_\beta)_{\beta \in \mathbf{N}_0}$ is equivariant.
\end{proposition}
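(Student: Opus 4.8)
The plan is to use Proposition~\ref{prop_equivariant}, which reduces the task of proving equivariance to checking two algebraic identities for each family, rather than verifying the $S$- and $T$-recursions separately. Concretely, for each $\beta$ I must show that the maps $\Phi_\beta$ and $\Psi_\beta$ satisfy $F(SA) = \bar{S}(F(A))$ and $F(c(A)) = \bar{c}_f(F(A))$ for all $A \in SL_2(\mathbf{N}_0)$. This is appealing because both conditions involve only a single matrix multiplication ($S$ on the left, or the complement $c$) rather than the more awkward $\bar{T}$ action.

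I would begin with the family $\Phi$. Writing $A = \begin{pmatrix} a & b \\ c & d \end{pmatrix}$, the product $SA = \begin{pmatrix} a & b \\ a+c & b+d \end{pmatrix}$. The first component of $\Phi_\beta(SA)$ is $a^2 + \beta ab + b^2$, which is unchanged from $\Phi_\beta(A)$, matching the fact that $\bar{S}$ preserves the first pair component. The second component becomes $a(a+c) + \beta b(a+c) + b(b+d) = (ac + \beta bc + bd) + (a^2 + \beta ab + b^2)$, which is exactly $m + n$ where $(m,n) = \Phi_\beta(A)$; this confirms $\Phi_\beta(SA) = \bar{S}(\Phi_\beta(A)) = (m, m+n)$. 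For the complement condition, I compute $\Phi_\beta(c(A))$ using $c(A) = \begin{pmatrix} d & c \\ b & a \end{pmatrix}$: the first component is $d^2 + \beta dc + c^2$, which by Identity~\ref{connection} (and its $\beta$-generalization) equals $\phi_\beta(n)/m$ since $(a^2+\beta ab + b^2)(c^2 + \beta cd + d^2) = \phi_\beta(n) $ with $n = ac+\beta bc+bd$; the second component is $dc + \beta db + ca = n$, so $\Phi_\beta(c(A)) = (\phi_\beta(n)/m, n) = \bar{c}_{\phi_\beta}(\Phi_\beta(A))$.

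The family $\Psi$ requires the same two checks but is more delicate because of the $\max/\min$ bookkeeping in Definition~\ref{Psifamily}. The key observation, already noted in the well-definedness proof, is that on $SL_2(\mathbf{N}_0) \setminus \{I\}$ the determinant condition forces $a \geq b \iff c \geq d$, so the $\max/\min$ choices are consistent across the two coordinates and one can split into the two cases $a = \max\{a,b\}$ and $b = \max\{a,b\}$. In each case the $\max/\min$ expressions collapse to signed quadratic forms, and one applies the appropriate $\psi_\beta$-identity exactly as for $\Phi$. I would also need to verify the base case $A = I$ separately, and to confirm that left-multiplication by $S$ and the complement $c$ interact correctly with the $\max/\min$ selection (e.g. that passing from $A$ to $SA$ does not flip which of $a,b$ is larger in a way that breaks the formula).

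The main obstacle will be the $\Psi$ case: tracking the $\max/\min$ selections through both the $S$-action and the complement, and confirming that the sign conventions in the two $\psi_\beta$-identities line up with $\bar{c}_{\psi_\beta}(m,n) = (|\psi_\beta(n)|/m, n)$ — in particular one must check that $\psi_\beta(n)$ has the expected sign (or handle its absolute value) so that the complementary factor is genuinely $|\psi_\beta(n)|/m$ and lands back in $\mathcal{D}_{\psi_\beta}$. By contrast, the $\Phi$ case should reduce to the two routine expansions sketched above once Proposition~\ref{prop_equivariant} is invoked.
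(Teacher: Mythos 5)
Your proposal matches the paper's proof essentially verbatim: both invoke Proposition~\ref{prop_equivariant} and verify the two identities $\Phi_\beta(SA) = \bar{S}(\Phi_\beta(A))$ and $\Phi_\beta(c(A)) = \bar{c}_f(\Phi_\beta(A))$ by direct expansion of $SA$ and $c(A)$, and both defer the $\Psi$ family as analogous (the paper simply calls it ``largely identical,'' while you flag the $\max/\min$ bookkeeping and sign issues more explicitly, which is if anything more careful). One small transcription slip: the second component of $\Phi_\beta(c(A))$ is $db + \beta cb + ca$, not $dc + \beta db + ca$, though it does equal $n$ as you claim.
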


\begin{proof}
We prove the result for the family $\Phi$. The proof for the family $\Psi$ is largely identical.

\

For,
$$A = \begin{pmatrix}
a & b \\
c & d \\
\end{pmatrix} \in SL_2(\mathbf{N}_0)$$

we check that, $\Phi_\beta(SA) = \bar{S}(\Phi_\beta(A))$ and $\Phi_\beta\left(c(A)\right) = \bar{c}_f(\Phi_\beta(A))$ and then use Proposition~\ref{prop_equivariant} to conclude that $\Phi_\beta$ is equivariant.

\begin{align*}
\Phi_\beta(SA) &=  \Phi_\beta \begin{bmatrix} \begin{pmatrix}
1 & 0 \\
1 & 1 \\
\end{pmatrix} 
\begin{pmatrix}
a & b \\
c & d \\
\end{pmatrix} 
\end{bmatrix} \\
               &=\Phi_\beta \begin{pmatrix}
                            a & b \\
                            a+c & b+d \\
                            \end{pmatrix} \\
               &= (a^2 + \beta ab + b^2, \ a(a+c) + \beta b(a+c) + b(b+d)\\
               &= (a^2 + \beta ab + b^2, \ (ac + \beta bc + bd) + (a^2 + \beta ab + b^2)) \\
               &= \bar{S}(\Phi_\beta(A))
\end{align*}

\begin{align*}
\Phi_\beta(c(A)) &= \Phi_\beta\begin{pmatrix}
    d & c \\
    b & a \\
\end{pmatrix} \\
        &= (d^2 + \beta dc + c^2, \ db + \beta cb + ca) \\
        &= (\frac{\phi_\beta(ac + \beta bc + bd)}{a^2 + \beta ab + b^2}, ac + \beta bc + bd) \\
        &=\bar{c}_f(\Phi_\beta(A))
\end{align*}
\end{proof}

We now define enumerable polynomials.

\begin{definition}
For a given $f \in \mathbf{Z}[x]$, we say $\mathcal{D}_f$ can be enumerated by $SL_2(\mathbf{N}_0)$ if there exists an invertible, equivariant map $F: SL_2(\mathbf{N}_0) \rightarrow \mathcal{D}_f$. If $\mathcal{D}_f$ can be enumerated by $SL_2(\mathbf{N}_0)$, we say the polynomial $f$ is enumerable. 
\end{definition}

For Lemmas~\ref{lemma_(0,n)} and~\ref{lemma_F(I)=(1,0)} as well as Corollary~\ref{cor_nonvanishing}, we assume that $F: SL_2(\mathbf{N}_0) \rightarrow \mathcal{D}_f$ is an invertible, equivariant map for some enumerable polynomial $f \in \mathbf{Z}[x]$.

\begin{lemma}\label{lemma_(0,n)}
The pair $(0,n) \notin \mathcal{D}_f$
 for all  $n \in \mathbf{N}_0$.
\end{lemma}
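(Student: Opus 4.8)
The plan is to argue by contradiction, exploiting that every pair of the form $(0,n)$ is a fixed point of the action of $S$. Directly from the definition $\bar{S}(m,n) = (m, m+n)$ we have $\bar{S}(0,n) = (0, 0+n) = (0,n)$, so $(0,n)$ is fixed by $\bar{S}$ regardless of the polynomial $f$. This is the only structural fact about the action we will need.

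Suppose, for contradiction, that $(0,n) \in \mathcal{D}_f$ for some $n \in \mathbf{N}_0$. Since $F$ is invertible, it is in particular surjective, so there is a matrix $A \in SL_2(\mathbf{N}_0)$ with $F(A) = (0,n)$. Applying the equivariance relation $F(SA) = \bar{S}(F(A))$ and then the fixed-point observation gives
$$F(SA) = \bar{S}(F(A)) = \bar{S}(0,n) = (0,n) = F(A),$$
so $F$ takes the same value at $SA$ and at $A$.

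To contradict injectivity of $F$, I would then show $SA \neq A$. This is immediate: $S$ has determinant $1$ and is therefore invertible over $\mathbf{Q}$ with $S \neq I$, so $SA = A$ would force $S = I$ upon right-multiplication by $A^{-1}$ (equivalently, one may invoke that $SL_2(\mathbf{N}_0)$ is a free monoid and hence cancellative). Since $SA \neq A$ while $F(SA) = F(A)$, the map $F$ is not injective, contradicting its invertibility. Hence no such $n$ exists, and $(0,n) \notin \mathcal{D}_f$ for all $n \in \mathbf{N}_0$.

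There is essentially no serious obstacle here; the only point requiring care is the justification that $SA \neq A$, which I handle via invertibility of $S$ over $\mathbf{Q}$. It is worth noting that the argument never inspects $f$ beyond the abstract hypotheses that $F$ is invertible and equivariant, so it uniformly shows that an enumerable polynomial can have no root in $\mathbf{N}_0$: under the standard convention, $(0,n) \in \mathcal{D}_f$ would mean $0 \mid f(n)$, i.e. $f(n) = 0$, and the lemma rules this out.
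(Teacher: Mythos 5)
Your argument is correct and is essentially the paper's own proof: both use surjectivity to produce $A$ with $F(A)=(0,n)$, then note $F(SA)=\bar{S}(0,n)=(0,n)=F(A)$ to contradict injectivity. Your extra step verifying $SA\neq A$ (via cancellativity of the free monoid) is a small but legitimate point of care that the paper leaves implicit.
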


\begin{proof}
Since $F$ is assumed to be invertible, it must be onto and so $\mathbf{Im}(F) = \mathcal{D}_f$ where $\mathbf{Im}(F)$ denotes the image of $F$. Suppose towards a contradicition that $(0, n) \in \mathbf{Im}(F)$ i.e. that there exists a matrix $A \in SL_2(\mathbf{N}_0)$ such that $F(A) = (0, n)$. Then $F(SA) = \bar{S}(F(A)) = (0, n + 0) = F(A)$ contradicting the one-to-one assumption on $F$.
\end{proof}

\begin{corollary}\label{cor_nonvanishing}
The polynomial $f$ is nonvanishing on $\mathbf{N}_0$, i.e. $f(n) \neq 0$ for all $n \in \mathbf{N}_0$.
\end{corollary}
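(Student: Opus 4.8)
The plan is to derive Corollary~\ref{cor_nonvanishing} directly from Lemma~\ref{lemma_(0,n)}, which asserts that $(0,n) \notin \mathcal{D}_f$ for every $n \in \mathbf{N}_0$. The key observation is simply that the definition of the divisor pair set $\mathcal{D}_f = \{(m,n) : m \mid f(n)\}$ interacts with the value $f(n) = 0$ in a very specific way: every nonnegative integer divides $0$, including $m = 0$ itself (since $0 \mid 0$ under the standard convention that $a \mid b$ means $b = ka$ for some integer $k$). So the strategy is to show that if $f(n) = 0$ for some $n$, then $(0,n)$ would be forced to lie in $\mathcal{D}_f$, contradicting the lemma.

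First I would argue by contraposition (or contradiction): suppose $f(n_0) = 0$ for some $n_0 \in \mathbf{N}_0$. Then I would check that the pair $(0, n_0)$ satisfies the membership condition for $\mathcal{D}_f$, namely that $0 \mid f(n_0)$. Since $f(n_0) = 0$ and $0 \mid 0$, this holds, so $(0, n_0) \in \mathcal{D}_f$. This directly contradicts Lemma~\ref{lemma_(0,n)}, which guarantees $(0, n_0) \notin \mathcal{D}_f$. Hence no such $n_0$ can exist, and $f(n) \neq 0$ for all $n \in \mathbf{N}_0$.

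The only subtlety worth flagging is the divisibility convention for $m = 0$: one must confirm that the paper's intended reading of $m \mid f(n)$ makes $0 \mid 0$ true, which is the standard convention (indeed it is implicit in Lemma~\ref{lemma_(0,n)} itself, since that lemma is only meaningful if pairs of the form $(0,n)$ could \emph{a priori} belong to $\mathcal{D}_f$). I expect this convention check to be the main (and essentially only) obstacle, and it is a trivial one; the rest is immediate. No induction, no use of equivariance, and no appeal to the specific form of $f$ is needed — the corollary is a one-line consequence of the lemma, with the lemma itself carrying all the real content via the collision $\bar{S}(0,n) = (0,n)$ that breaks injectivity.
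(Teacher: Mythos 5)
Your proposal is correct and is essentially identical to the paper's own proof: both argue that if $f(n_0)=0$ then $0 \mid f(n_0)$, so $(0,n_0) \in \mathcal{D}_f$, contradicting Lemma~\ref{lemma_(0,n)}. Your extra remark about the divisibility convention for $0 \mid 0$ is a reasonable clarification but does not change the argument.
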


\begin{proof}
Otherwise, there is an integer $n$ such that $0 = f(n)$ and so $0 \lvert f(n)$ which means $(0, n) \in \mathcal{D}_f$, contradicting the previous lemma.
\end{proof}

\begin{lemma}\label{lemma_F(I)=(1,0)}
$F$ sends the identity matrix $I$ to the pair $(1,0)$.
\end{lemma}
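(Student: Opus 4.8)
The plan is to exploit the surjectivity of $F$ together with the freeness of $SL_2(\mathbf{N}_0)$. Since $1 \mid f(0)$ we have $(1,0) \in \mathcal{D}_f = \mathbf{Im}(F)$, so there is a matrix $A$ with $F(A) = (1,0)$; it then suffices to show that any such $A$ must equal $I$. I would argue by contradiction: assuming $A \neq I$, Proposition~\ref{uniqueness of factors} splits the situation into the two exhaustive cases $A = SB$ and $c(A) = SB$ for some $B \in SL_2(\mathbf{N}_0)$. The idea is that $(1,0)$ can be hit neither by $\bar{S}$ nor by $\bar{T}$, which are exactly the maps that record left multiplication by $S$ and $T$.

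In the first case, equivariance gives $(1,0) = F(SB) = \bar{S}(F(B))$. Writing $F(B) = (m,n) \in \mathcal{D}_f$, the definition $\bar{S}(m,n) = (m, m+n)$ forces $m = 1$ and $m + n = 0$, hence $n = -1$, contradicting $n \in \mathbf{N}_0$. This case is routine.

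The second case is the main obstacle and is where the complement machinery enters. Since $c$ is a homomorphism with $c(S) = T$ (Proposition~\ref{refl}), the condition $c(A) = SB$ is equivalent to $A = T\,c(B)$, so equivariance gives $(1,0) = F(T\,c(B)) = \bar{T}(F(c(B)))$. Writing $F(c(B)) = (m,n) \in \mathcal{D}_f$ and unwinding $\bar{T} = \bar{c}_f \circ \bar{S} \circ \bar{c}_f$, I would track only the second coordinate: $\bar{c}_f$ leaves it unchanged while $\bar{S}$ increments it by the first coordinate, so the second coordinate of $\bar{T}(m,n)$ equals $\tfrac{|f(n)|}{m} + n$. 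The crux is that this quantity is strictly positive. Lemma~\ref{lemma_(0,n)} guarantees $m \neq 0$, so that $\bar{c}_f$ is defined and $\tfrac{|f(n)|}{m}$ makes sense, and Corollary~\ref{cor_nonvanishing} guarantees $f(n) \neq 0$, whence $\tfrac{|f(n)|}{m} \geq 1$ and the second coordinate is at least $1$. This contradicts the requirement that it equal $0$. Having eliminated both cases, $A = I$, and therefore $F(I) = (1,0)$.
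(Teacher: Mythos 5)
Your proof is correct and follows essentially the same route as the paper: surjectivity produces a preimage $A$ of a pair of the form $(\ast,0)$, Proposition~\ref{uniqueness of factors} splits the case $A \neq I$ into $A = SB$ and $c(A) = SB$, and each case is killed by showing the second coordinate would have to be negative (equivalently, that $\bar{S}$ and $\bar{T}$ strictly increase it). The only differences are cosmetic: you target $(1,0)$ directly and unwind $\bar{T}$ in the second case, whereas the paper targets $(|f(0)|,0)$, reduces that case to an $\bar{S}$ computation by first applying $\bar{c}_f$, and then uses $c(I)=I$ to extract the side fact $|f(0)|=1$ (reused later in the paper), which your version does not recover but which is not needed for the lemma as stated.
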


\begin{proof}
The pair $(|f(0)|, 0) \in \mathcal{D}_f = \mathbf{Im}(F)$. Then there exists a matrix $A \in SL_2(\mathbf{N}_0)$ such that $F(A) = (|f(0)|, 0)$. We will show that $A = I$ and $|f(0)| = 1$. 

\

Suppose $A = SB$ for some $B \in SL_2(\mathbf{N}_0)$. Then, $F(A) = F(SB) = \bar{S}(F(B)) = (|f(0)|, 0)$. We rewrite $F(B) := (m, n) \in \mathcal{D}_f$. Then $\bar{S}(F(B)) = (m, n+m) = (|f(0)|, 0) \implies n < 0$ (since $f$ being nonvanishing implies $|f(0)|  
> 0$) but this is a contradiction since $n \in \mathbf{N}_0$.

\

Suppose instead that $c(A) = SB$ for some $B \in SL_2(\mathbf{N}_0)$. Then we have $F(c(A)) = \bar{c}_f(F(A)) = (\frac{|f(0)|}{|f(0)|}, 0) = (1,0)$. So, $F(SB) = (1,0)$. Write $F(B) := (m, n) \in \mathcal{D}_f$. Then, $F(SB) =  \bar{S}(F(B)) = (m, m+n) = (1,0) \implies n = -1$, a contradiction.

\

It follows from Proposition~\ref{uniqueness of factors}  that $A = I$ and therefore that $F(I) = (|f(0)|, 0)$. We also have $F\left(c(I)\right) = \bar{c}_f\left(F(I)\right)$, and since $c(I) = I$, we get $F(I) = \bar{c}_f\left(F(I)\right)$. Finally we can write $F(I) = (|f(0)|, 0) = \bar{c}_f(|f(0)|, 0)) = (\frac{|f(0)|}{|f(0)|}, 0) = (1, 0)$.
\end{proof}

We can now prove the uniqueness of invertible, equivariant maps.

\begin{proposition}\label{uniqueness of st pres map}
Suppose $F$ and $F'$ are both invertible, equivariant maps into the set $\mathcal{D}_f$ derived from the same polynomial $f \in \mathbf{Z}[x]$. Then $F = F'$.
\end{proposition}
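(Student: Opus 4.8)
The plan is to prove $F = F'$ by strong induction on the length of the unique word in $S$ and $T$ that represents a matrix $A \in SL_2(\mathbf{N}_0)$. This is legitimate precisely because the monoid is freely generated by $S$ and $T$ (stated in Section~\ref{section_intro}), so each $A$ has a well-defined word length, and every non-identity matrix has a leftmost generator.

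For the base case I would handle $A = I$, the empty word. Both $F$ and $F'$ are invertible equivariant maps into the same set $\mathcal{D}_f$, so Lemma~\ref{lemma_F(I)=(1,0)} applies to each separately and gives $F(I) = (1,0) = F'(I)$. This is the only place where the invertibility hypothesis is genuinely needed: equivariance by itself does not determine the image of the identity, but the injectivity argument behind Lemma~\ref{lemma_F(I)=(1,0)} (which in turn uses Lemma~\ref{lemma_(0,n)} and Corollary~\ref{cor_nonvanishing}) forces that image to be $(1,0)$.

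For the inductive step I would fix $A \neq I$ and assume $F(B) = F'(B)$ for every $B$ of strictly smaller word length. By freeness of rank $2$, the word for $A$ begins with either $S$ or $T$, so I may write $A = SB$ or $A = TB$ with $B$ strictly shorter. The key observation is that $\bar{S}$ and $\bar{T} = \bar{T}_f$ are fixed functions on $\mathcal{D}_f$ determined solely by $f$, not by the particular equivariant map; indeed $\bar{T}$ is built from $\bar{S}$ and $\bar{c}_f$ alone. Hence in the first case equivariance yields $F(A) = \bar{S}(F(B)) = \bar{S}(F'(B)) = F'(A)$, and in the second case $F(A) = \bar{T}(F(B)) = \bar{T}(F'(B)) = F'(A)$, where the middle equality is the inductive hypothesis. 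This closes the induction and gives $F = F'$ on all of $SL_2(\mathbf{N}_0)$.

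I expect no real difficulty in the inductive step itself, which becomes mechanical once freeness supplies the decomposition $A = SB$ or $A = TB$. The genuinely load-bearing step is the base case: the equality $F(I) = F'(I)$ is not a formal consequence of equivariance and must be imported from Lemma~\ref{lemma_F(I)=(1,0)}. One should also record that $\bar{T}$ is everywhere defined on the relevant pairs, since $\bar{c}_f$ requires the second components never to annihilate $f$; this is exactly what Corollary~\ref{cor_nonvanishing} guarantees.
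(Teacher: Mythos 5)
Your proposal is correct and is essentially the paper's own argument: the paper states that freeness of $SL_2(\mathbf{N}_0)$ on $S$ and $T$ means an equivariant map is determined by its value at $I$, and then invokes Lemma~\ref{lemma_F(I)=(1,0)} to pin that value to $(1,0)$. Your induction on word length is just the explicit unpacking of the phrase ``uniquely determined by where it sends the identity,'' with the same load-bearing use of the lemma at the base case.
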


\begin{proof}
Since the matrices $S$ and $T$ freely generate the monoid $SL_2(\mathbf{N}_0)$, we see that an equivariant map $F$ is uniquely determined by where it sends the identity matrix. However, we proved that an invertible, equivariant map $F$ necessarily sends the identity matrix $I$ to the pair $(1,0)$. The conclusion follows.
\end{proof}

The uniqueness of invertible, equivariant maps motivates the following definition

\begin{definition}
For a polynomial $f \in \mathbf{Z}[x]$ we let $\hat{F}_f: SL_2(\mathbf{N}_0) \rightarrow \mathcal{D}_f$ denote the equivariant map satisfying $\hat{F}_f(I) = (1,0)$.
\end{definition}

Note that we do not require $\hat{F}_f$ to be invertible. Whenever a polynomial $f$ is enumerable, $\hat{F}_f$ is automatically forced to be invertible by the uniqueness of equivariant, invertible maps. However, it is not hard to see that $\hat{F}_f: SL_2(\mathbf{N}_0) \rightarrow \mathcal{D}_f$ will more generally be well-defined for polynomials $f \in \mathbf{Z}[x]$ that are nonvanishing on $\mathbf{N}_0$.

\

We can write down the first few rows of the general binary tree generated by $\hat{F}_f$ through applying $\bar{S}$ as the left child and $\bar{T}_f$ as the right child

$$
\begin{tikzpicture}[level distance=15mm, 
                    level 1/.style={sibling distance=50mm},
                    level 2/.style={sibling distance=25mm}]
  \node {\((1, 0)\)}
    child {
      node {\(\bar{S}(1,0)\)} 
      child {
        node {\(\bar{S}^2(1,0)\)}
      }
      child {
        node {\(\bar{S}\bar{T}_f(1,0)\)}
      }
    }
    child {
      node {\(\bar{T}_f(1,0)\)}
      child {
        node {\(\bar{T}_f\bar{S}(1,0)\)}
      }
      child {
        node {\(\bar{T}_f^2(1,0)\)}
      } 
    };
\end{tikzpicture}
$$

\

We can evaluate each composition and write each pair in terms of $f$.

$$
\begin{tikzpicture}[level distance=15mm, 
                    level 1/.style={sibling distance=60mm},
                    level 2/.style={sibling distance=25mm}]
  \node {\((1, 0)\)}
    child {
      node {\((1, 1)\)} 
      child {
        node {\((1, 2)\)}
      }
      child {
        node {\(\left(\frac{|f(1 + |f(1)|)}{|f(1)|}, 1 + |f(1)|\right)\)}
      }
    }
    child {
      node {\(\left(|f(1)|, 1\right)\)}
      child {
        node {\(\left(|f(1)|, 1 + |f(1)|\right)\)}
      }
      child {
        node {\(\left(|f(2)|, 2\right)\)}
      } 
    };
\end{tikzpicture}
$$

Notice that the "boundary" of the binary tree enumerating $\hat{F}_f$ will be $(1, n)$ on the left and $(|f(n)|, n)$ on the right. This corresponds to the \textit{trivial} factorizations of $\mathcal{D}_f$, namely those of the form $|f(n)| = 1 \times |f(n)|$. The interior of the tree (i.e. all the pairs not in the boundary) correspond to the \textit{nontrivial} factorizations of the tree (i.e. where neither $m$ nor $\frac{|f(n)|}{m}$ equals $1$). We have previously remarked that the interior of the $S, T$ tree enumeration for $SL_2(\mathbf{N}_0)$ will be the semigroup $SL_2(\mathbf{N})$. Thus, nontrivial factorizations of $f(n)$ for $f$ enumerable and $n \in \mathbf{N}_0$ will correspond to matrices in $SL_2(\mathbf{N})$.

\begin{lemma}\label{f(1), f(2) prime}
Suppose that $f \in \mathbf{Z}[x]$ is enumerable. Then $|f(1)|$, $|f(2)|$, and $|f(|f(1)|)|$ are prime.
\end{lemma}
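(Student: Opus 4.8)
The plan is to read every primality claim off the divisor--pair tree generated by $F := \hat{F}_f$, exploiting that $F$ is a bijection. Since $F$ is onto and one-to-one, for each fixed $n$ the nodes whose value has second coordinate $n$ are in bijection with the pairs $(m,n) \in \mathcal{D}_f$, and as $m$ ranges over these it ranges over all positive divisors of $|f(n)|$ (here $f(n)\neq 0$ by Corollary~\ref{cor_nonvanishing}, and $m=0$ is excluded by Lemma~\ref{lemma_(0,n)}). Thus $|f(n)|$ is prime precisely when there are exactly two nodes with second coordinate $n$. Two are always present: the left--spine node with $F(S^n)=\bar{S}^n(1,0)=(1,n)$ and the right--spine node with $F(T^n)=\bar{T}_f^{\,n}(1,0)=(|f(n)|,n)$, realizing the trivial divisors $1$ and $|f(n)|$; since $S^n\neq T^n$ for $n\geq 1$, injectivity gives $(1,n)\neq(|f(n)|,n)$, i.e. $|f(n)|\geq 2$ for all $n\geq 1$. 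So primality of $|f(n)|$ is exactly the statement that there are \emph{no other} nodes with second coordinate $n$.

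The central tool is that the second coordinate strictly increases down every edge: $\bar{S}(m,n)=(m,n+m)$ adds $m\geq 1$, while $\bar{T}_f(m,n)=\bar{c}_f(\bar{S}(\bar{c}_f(m,n)))$ adds the complementary factor $|f(n)|/m\geq 1$; both increments are positive integers, each equal to $1$ only in the trivial case ($m=1$ for a $\bar{S}$--step, $m=|f(n)|$ for a $\bar{T}_f$--step). Hence a node at depth $d$ has second coordinate $\geq d$, with equality iff every increment along its path is $1$. Using $|f(\ell)|\geq 2$ for $\ell\geq 1$, an all-increments-$1$ path cannot switch spines (from $(1,\ell)$ the only $+1$ move is $\bar{S}$, as $\bar{T}_f$ would force $|f(\ell)|=1$; symmetrically on the right), so the depth-$d$ nodes with second coordinate exactly $d$ are precisely $S^d$ and $T^d$.

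This finishes $|f(1)|$ and $|f(2)|$ by a finite check. A node with second coordinate $1$ has depth $\leq 1$, and the only such nodes are $S$ and $T$, so $|f(1)|$ is prime. A node with second coordinate $2$ has depth $\leq 2$; the depth-$2$ nodes carry second coordinates $\{2,\,1+|f(1)|\}$, and since $|f(1)|\geq 2$ only $S^2$ and $T^2$ reach $2$, so $|f(2)|$ is prime.

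For $|f(|f(1)|)|$, write $N:=|f(1)|$ and try to show $S^N,T^N$ are the only nodes with second coordinate $N$. Any other such node $A$ is off both spines, hence interior, so its value is a nontrivial factorization with both $m\geq 2$ and $|f(N)|/m\geq 2$ (Remark~\ref{remark_interior}). Tracing up to the deepest spine ancestor $P=S^j$ or $T^j$, the path leaves the spine by the unique off-spine move, whose increment is $|f(j)|$, landing at second coordinate $j+|f(j)|$; all later increments are $\geq 2$ since interior nodes have both factors $\geq 2$. As the root's children are both on spines we get $j\geq 1$, and $j=1$ would already give second coordinate $1+N>N$, so $j\geq 2$ and $N\geq j+|f(j)|$. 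Thus an interior node at second coordinate $N$ forces some $j\geq 2$ with $j+|f(j)|\leq N$. I expect this exclusion to be the main obstacle: it cannot come from injectivity at a single level (several nodes may legitimately share one second coordinate), so it must be extracted from the global bijection. I would attack it by taking the interior node of \emph{minimal} second coordinate $v\leq N$; minimality forces $|f(\ell)|$ prime for all $1\leq\ell<v$ and forces $v=j+|f(j)|$ with $|f(j)|$ prime, after which I would look for a collision or a counting contradiction (comparing the node count with second coordinate $\leq N$, namely $\sum_{\ell=0}^{N}\tau(|f(\ell)|)$, against the spine count $2N+1$) to rule the configuration out. Ruling out this ``premature'' interior node --- equivalently, showing enumerability forbids the growth failure $j+|f(j)|\leq N$ --- is the crux.
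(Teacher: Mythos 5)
Your handling of $|f(1)|$ and $|f(2)|$ is correct and complete, and it is essentially the paper's argument: count the tree nodes carrying a given second coordinate, observe that the two spines contribute exactly the trivial divisors $1$ and $|f(n)|$, and use the strict increase of second coordinates down the tree to show that no third node can carry second coordinate $1$ or $2$. Your depth bookkeeping (a depth-$d$ node has second coordinate $\geq d$, with equality only on the spines) is in fact tighter and more explicit than what the paper writes.

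The genuine gap is the third claim, and you have flagged it yourself: you reduce the primality of $|f(|f(1)|)|$ to the nonexistence of an interior entry point $ST^j$ or $TS^j$ with $j\geq 2$ and $j+|f(j)|\leq|f(1)|$, and you do not rule this out. Without that exclusion the conclusion simply does not follow, and the repair strategies you sketch do not yet close it: the counting comparison of $\sum_{\ell=0}^{N}\tau(|f(\ell)|)$ against the spine count $2N+1$ is a restatement of the claim (the sum equals $2N+1$ if and only if there is no interior node at second coordinate $\leq N$), not an independent constraint. For context, the paper dispatches this point in one sentence, asserting that $1+|f(1)|$ is the \emph{smallest} second component occurring on the interior of the tree and justifying this only by the parent-to-child increase; that justification covers descendants of $ST$ and $TS$ but says nothing about the other interior entry points $ST^j,TS^j$ for $j\geq 2$, whose second components are exactly the quantities $j+|f(j)|$ your reduction isolates. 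So your finer analysis has located the one step where the published argument is thinnest, but as a proof your proposal is incomplete at precisely that step: it still needs the bound $j+|f(j)|>|f(1)|$ for all $j\geq 2$ (for instance extracted from Proposition~\ref{maincondition} or some growth input on $|f|$) before the third primality claim is established.
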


\begin{proof}
We show that $|f(1)|$, $|f(2)|$, and $|f(|f(1)|)|$ do not appear in the interior of the tree for $\hat{F}_f$. From the definition of $\bar{c}_f$ we see that the $\hat{F}_f$ generalized divisor pair tree will be symmetric in the second component. From the definition of $\bar{S}$ and $f$ being nonvanishing, we have that children pairs must have a strictly larger second component than their parent. As a result, $1 + |f(1)|$ is the smallest second component on the interior of the tree, and since $1 < 1 + |f(1)|$, we have that $|f(1)|$ only has a nontrivial factorization on the tree. Since $f$ is enumerable, which of course means that $\hat{F}_f$ is invertible, we have that $|f(1)|$ has no nontrivial factorizations in general, and so it must be prime. Similarly, since $|f(1)| < 1 + |f(1)|$, we find that $|f(|f(1)|)|$ is prime. Since $\hat{F}_f$ is one-to-one, $ 1 < |f(1)|$ (otherwise $(1,1)$ appears twice on the second row). Then, $2 < 1 + |f(1)|$. Thus, $|f(2)|$ must also be prime.
\end{proof}

\begin{lemma}\label{f -f lemma}
Let $f \in \mathbf{Z}[x]$ be nonvanishing on $\mathbf{N}_0$ so that $\hat{F}_f$ is well-defined. Then $\hat{F}_f(A) = \hat{F}_{(-f)}(A)$ for all matrices $A \in SL_2(\mathbf{N}_0)$.
\end{lemma}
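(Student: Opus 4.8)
The plan is to observe that every ingredient in the construction of $\hat{F}_f$ depends on $f$ only through its absolute value $|f|$, and since $|f| = |-f|$ as functions on $\mathbf{N}_0$, the two maps must be built from identical data and therefore coincide.

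First I would record the two elementary facts that drive the argument. Because divisibility is insensitive to sign, $m \mid f(n)$ if and only if $m \mid -f(n)$, so the divisor pair sets coincide, $\mathcal{D}_f = \mathcal{D}_{-f}$; in particular $\hat{F}_f$ and $\hat{F}_{-f}$ have the same codomain. Next, since $|f(n)| = |-f(n)|$ for every $n \in \mathbf{N}_0$, Definition~\ref{c_bar_f} gives $\bar{c}_f(m,n) = (\frac{|f(n)|}{m}, n) = (\frac{|-f(n)|}{m}, n) = \bar{c}_{-f}(m,n)$, so the two complements agree on $\mathcal{D}_f = \mathcal{D}_{-f}$. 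Both are well-defined here because $f$, and hence $-f$, is nonvanishing on $\mathbf{N}_0$.

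From these two facts the equivariant data coincide. The left action $\bar{S}(m,n) = (m, m+n)$ does not involve $f$ at all, and $\bar{T}_f(m,n) = \bar{c}_f(\bar{S}(\bar{c}_f(m,n)))$ is built solely from $\bar{S}$ and $\bar{c}_f$; hence $\bar{T}_f = \bar{T}_{-f}$. Thus $\hat{F}_f$ and $\hat{F}_{-f}$ satisfy the same initial condition, $\hat{F}_f(I) = (1,0) = \hat{F}_{-f}(I)$, and the same two recursions $F(SA) = \bar{S}(F(A))$ and $F(TA) = \bar{T}_f(F(A)) = \bar{T}_{-f}(F(A))$.

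To conclude I would argue by induction on the word length of $A$ in the free generators $S$ and $T$, using that $SL_2(\mathbf{N}_0)$ is freely generated of rank $2$, as noted in the introduction. The base case $A = I$ is immediate from the matching initial conditions. For the inductive step, any $A \neq I$ factors uniquely as $SB$ or $TB$ with $B$ of smaller length; applying the appropriate recursion, the inductive hypothesis $\hat{F}_f(B) = \hat{F}_{-f}(B)$, and the equality of $\bar{S}$ and of $\bar{T}_f = \bar{T}_{-f}$, yields $\hat{F}_f(A) = \hat{F}_{-f}(A)$. There is essentially no genuine obstacle here, as the whole statement reduces to $|f| = |-f|$; the one point requiring care is verifying that the construction of $\hat{F}_f$ never references the sign of $f$ but only its absolute value, which is exactly the observation $\bar{c}_f = \bar{c}_{-f}$ made above.
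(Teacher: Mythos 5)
Your proposal is correct and follows essentially the same route as the paper: both arguments reduce to the observations that $\bar{S}$ is independent of $f$, that $\bar{c}_f = \bar{c}_{(-f)}$ because $|f(n)| = |-f(n)|$, and that the common initial condition $\hat{F}_f(I) = (1,0) = \hat{F}_{(-f)}(I)$ then forces agreement everywhere. Your explicit induction on word length merely spells out what the paper compresses into the statement that an equivariant map is determined by its value at $I$.
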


\begin{proof}
$\hat{F}_f$ is determined by $\bar{S}$ (which is independent of $f$) and $\bar{c}_f(m,n) = (\frac{|f(n)|}{m}, n) = (\frac{|-f(n)|}{m}, n) = \bar{c}_{(-f)}(m,n)$. Since $\hat{F}_f(I) = (1,0)$, the two functions must be the same at every matrix.
\end{proof}

Lemma~\ref{f -f lemma} allows us to consider $f \in \mathbf{Z}[x]$ up to multiples of $\{\pm 1\}$. Note that one of $f$ or $-f$ must have a positive leading coefficient and so we may restrict our focus to polynomials $f \in \mathbf{Z}[x]$ with positive leading coefficients throughout the classification.

\

We can finally state the classification result which will be proved in the following section. Recall that we defined $\phi_\beta(n) = n^2 + \beta n + 1$ and $\psi_\beta(n) = n^2 + \beta n - 1$.

\begin{theorem}\label{main}
Suppose that $f \in \mathbf{Z}[x]$ is enumerable and, without loss of generality, that $f$ has a positive leading coefficient. Then

$$f \in \{\phi_0, \ \phi_1, \ \psi_2, \ \phi_3 \}$$

with $\hat{F}_{\phi_0} = \Phi_0, \ \hat{F}_{\phi_1} = \Phi_1, \ \hat{F}_{\psi_2} = \Psi_2$,  and  $\hat{F}_{\phi_3} = \Phi_3$.
\end{theorem}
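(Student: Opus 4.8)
The plan is to reduce the statement to a purely enumerative condition on the binary tree generated by $\hat{F}_f$, and then to pin down $f$ by combining the local data near the root with a reduction-theoretic analysis of the tree's fibers. By Lemma~\ref{f -f lemma} I may assume $f$ has positive leading coefficient, and by Lemma~\ref{lemma_F(I)=(1,0)} any enumerable $f$ satisfies $|f(0)|=1$, so $f(0)=\pm 1$; the sign will ultimately separate the $\phi$ family from the $\psi$ family. The central reformulation is the following. Since $\bar{S}$ and $\bar{T}_f$ map $\mathcal{D}_f$ into itself (Proposition~\ref{invariance}) and each step strictly increases the second coordinate, the node set is partitioned into \emph{fibers} indexed by the second coordinate $n$, and the fiber over $n$ lands inside $\{(d,n): d \mid |f(n)|\}$, a set of size $\tau(|f(n)|)$. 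Writing $D_f(n)$ for the number of tree nodes with second coordinate $n$, the map $\hat{F}_f$ is a bijection if and only if for every $n$ the nodes over $n$ have distinct first coordinates and $D_f(n)=\tau(|f(n)|)$. The complement $\bar{c}_f$ acts on each fiber by $m \mapsto |f(n)|/m$ (this is the reflection symmetry of the tree), so each fiber is symmetric and the entire problem becomes one of counting and matching divisors.

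Next I would extract constraints from the shallow part of the tree, where Lemma~\ref{f(1), f(2) prime} already forces $|f(1)|$, $|f(2)|$ and $|f(|f(1)|)|$ to be prime. To locate a node $(m,n)$ of a fiber I run its \emph{ancestry}: a node is obtained from its parent either by $\bar{S}$, in which case the parent is $(m,n-m)$ and this is admissible exactly when $m \le n$, or by $\bar{T}_f = \bar{c}_f \bar{S} \bar{c}_f$, in which case one undoes $\bar{c}_f$, a single $\bar{S}$, and $\bar{c}_f$ again. Iterating this descent is a deterministic, Euclidean (continued-fraction) reduction that must terminate at the root $(1,0)$, and its admissibility at each step imposes inequalities of the shape $|f(r)| < q^2$ on the intermediate values; it is precisely the reduction of the binary quadratic form attached to $(m,n)$ through the generalized Diophantus identities of Section~\ref{section_class_statement}. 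Running it at small $n$ forces $f$ to be monic of degree $2$: if the leading coefficient exceeds $1$ or the degree is at least $3$, then $|f(n)|$ outgrows the number of admissible descents terminating at $n$, so some value with a repeated prime factor has $D_f(n) < \tau(|f(n)|)$ and a divisor pair is missed. The polynomial $n^3 + n + 1$ is the model failure: it passes every primality test of Lemma~\ref{f(1), f(2) prime}, yet $f(7) = 351 = 3^3 \cdot 13$ receives only four tree nodes against $\tau(351)=8$.

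Once $f = n^2 + \beta n + \varepsilon$ with $\varepsilon = f(0) \in \{\pm 1\}$, it remains to pin $\beta$. The primality conditions give necessary congruence and size restrictions, but they are not sufficient (for instance $\phi_9$ and $\psi_7$ survive all three primality tests), so the surviving cases must be eliminated by the fiber-count condition $D_f(n) = \tau(|f(n)|)$ at a well-chosen $n$. The relevant discriminant is $\beta^2 - 4$ for $\phi_\beta$ and $\beta^2 + 4$ for $\psi_\beta$, and the descent above enumerates all divisors of a value exactly when the discriminant has class number one (a single reduced class under the $SL_2(\mathbf{N}_0)$ reduction); this singles out $\beta^2 - 4 \in \{-4,-3,5\}$ (giving $\phi_0, \phi_1, \phi_3$) and $\beta^2 + 4 = 8$ (giving $\psi_2$), with every larger $\beta$ failing because a non-principal reduced form produces an unrepresented divisor. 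For the converse direction I would invoke uniqueness: $\Phi_\beta$ and $\Psi_\beta$ are equivariant with $\Phi_\beta(I) = \Psi_\beta(I) = (1,0)$, so by Proposition~\ref{uniqueness of st pres map} they coincide with $\hat{F}_{\phi_\beta}$ and $\hat{F}_{\psi_\beta}$, and one then checks directly that these four explicit maps are bijections, injectivity coming from the free generation of $SL_2(\mathbf{N}_0)$ by $S$ and $T$ and surjectivity from the exhaustiveness of the form representation in these class-number-one cases.

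The main obstacle is exactly this last representation step: proving that for the four distinguished polynomials every divisor of $f(n)$ arises from a matrix in $SL_2(\mathbf{N}_0)$, so that $\hat{F}_f$ is onto, while for every other monic quadratic — and a fortiori every higher-degree candidate — some divisor is permanently skipped. This is a statement about reduction theory of binary quadratic forms of discriminant $\beta^2 \mp 4$, and making the equivalence ``the tree enumerates all divisors $\iff$ the discriminant has a single reduced class'' fully rigorous, uniformly across the definite cases ($\phi_0, \phi_1$) and the indefinite cases ($\psi_2, \phi_3$), is where the real work lies. The degree bound and the primality bookkeeping are comparatively routine by contrast.
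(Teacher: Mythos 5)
Your architecture matches the paper's in outline (reduce to a condition on the descent toward the root, use Lemma~\ref{f(1), f(2) prime} for small cases, invoke Proposition~\ref{uniqueness of st pres map} to identify $\hat{F}_f$ with $\Phi_\beta$ or $\Psi_\beta$), but the two steps you defer are exactly the content of the theorem, and the mechanism you propose for them is not established and is likely not even the right criterion. The paper's engine is Proposition~\ref{maincondition}: $f$ is enumerable if and only if every $(m,n)\in\mathcal{D}_f\setminus\{(1,0)\}$ satisfies $\min\{m,|f(n)|/m\\}\leq n<\max\{m,|f(n)|/m\}$. This converts both injectivity and surjectivity into a single local inequality that can be checked or violated by exhibiting one explicit divisor pair. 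Your fiber-count reformulation $D_f(n)=\tau(|f(n)|)$ is just a restatement of bijectivity and gives you nothing checkable; you then propose to decide it via ``discriminant has a single reduced class,'' which you admit is where the real work lies. That equivalence is doubtful as stated: whether every divisor of a represented number is itself represented by the principal form is governed by one-class-per-genus type conditions rather than class number one, and even full representability of divisors is strictly weaker than the tree condition, which additionally demands the ordering $\min\leq n<\max$ so that exactly one backward step ($\bar{S}^{-1}$ or $\bar{c}_f\bar{S}^{-1}\bar{c}_f$) is admissible at each node. Without that ordering your Euclidean descent need not be deterministic or terminate at $(1,0)$.

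Concretely, two gaps. First, the degree bound: your argument is one numerical example ($f(7)=351$ for $n^3+n+1$) plus the assertion that $|f(n)|$ ``outgrows the number of admissible descents.'' You must produce, for \emph{every} $f$ with $\deg f\geq 3$ or $\deg f=2$ with leading coefficient $\geq 2$, a divisor pair that breaks enumerability. The paper does this with Schildkraut's construction: choose $a$ with $|f(-a)|>3a$ and $f(n)>\tfrac{3}{2}n^2$ for $n>2a$, set $n_0=|f(-a)|-a$, and deduce $f(n_0)=(n_0+a)(n_0+b)$ with $a,b>0$, so both factors exceed $n_0$ and the left inequality of Proposition~\ref{maincondition} fails. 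Nothing in your proposal supplies this uniform construction. Second, the elimination of the remaining monic quadratics and the surjectivity of the four surviving maps both rest on your unproved class-number claim. The paper instead eliminates $\phi_\beta$ for odd $\beta>3$ by exhibiting the perfect-square value $\phi_\beta(n_0)=(n_0+\tfrac{\beta-1}{2})^2$ at $n_0=(\tfrac{\beta-1}{2})^2-1$, which is an interior fixed point of $\bar{c}_f$ and kills injectivity; eliminates even $\beta$ by parity of $f(1)$; eliminates $\psi_b$ for $b>2$ via $f(b-1)=b(2b-3)$; and proves the four survivors enumerable by directly verifying the inequality with elementary estimates such as $(n+1)^2>n^2+\beta n\pm 1>n^2$ in the relevant ranges (plus the boundary analysis $(n+1)(n+2)>|\phi_3(n)|\geq(n+1)^2$ for $\phi_3$). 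Also, a small factual slip: $\phi_9$ does not survive all three primality tests, since $|f(|f(1)|)|=\phi_9(11)=221=13\cdot 17$.
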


\section{Proof of classification theorem}\label{section_class_proof}

\begin{remark}
The inverses of $\bar{S}$ and $\bar{c}_f$ are given by, $\bar{S}^{-1}(m,n) = (m, n-m)$, and $\bar{c}_f^{-1}(m, n) = \bar{c}_f(m, n)$.
\end{remark}

\begin{proposition}\label{maincondition}
Let $f \in \mathbf{Z}[x]$ be nonvanishing on $\mathbf{N}_0$. Then $f$ is enumerable if and only if for all $(m, n) \in \mathcal{D}_f \setminus \{(1, 0) \}$

$$\min \{m, \frac{|f(n)|}{m} \} \leq n < \max \{m, \frac{|f(n)|}{m} \}$$

\end{proposition}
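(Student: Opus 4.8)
The plan is to recast the displayed inequality as a statement about which tree-moves can be \emph{undone}, and then run an ascent argument through the free $S,T$-tree. For $(m,n)\in\mathcal{D}_f$ write $m':=|f(n)|/m$ for the complementary divisor (well-defined since $f$ nonvanishing forces $m,m'\ge 1$). The preliminary step is the elementary congruence $f(n)\equiv f(n-d)\pmod d$ for every $d\ge 1$, which shows that if $(m,n)\in\mathcal{D}_f$ with $n\ge m$ then $(m,n-m)\in\mathcal{D}_f$. Combining this with Proposition~\ref{invariance} I can pin down the images exactly: $\bar S$ and $\bar T$ are injective (the latter because $\bar T=\bar c_f\bar S\bar c_f$ with $\bar c_f$ an involution), $\mathbf{Im}(\bar S)=\{(m,n)\in\mathcal{D}_f: n\ge m\}$ and $\mathbf{Im}(\bar T)=\{(m,n)\in\mathcal{D}_f: n\ge m'\}$. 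Call these the \emph{$S$-reducible} and \emph{$T$-reducible} pairs. Since $\bar S$ and $\bar T$ strictly increase the second coordinate, $(1,0)$ lies in neither image, i.e.\ it is not a child.

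Next I would reformulate the inequality. A short case split on the sign of $m-m'$ shows that, for $(m,n)\ne(1,0)$, the condition $\min\{m,m'\}\le n<\max\{m,m'\}$ holds if and only if \emph{exactly one} of $n\ge m$ and $n\ge m'$ holds. So the proposition becomes: $\hat F_f$ is a bijection if and only if every non-root pair of $\mathcal{D}_f$ lies in exactly one of $\mathbf{Im}(\bar S)$, $\mathbf{Im}(\bar T)$.

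For the forward implication I assume this exactly-one-reducible property and prove $\hat F_f$ is onto and one-to-one by strong induction on the second coordinate $n$. Surjectivity: $(1,0)=\hat F_f(I)$, and for $(m,n)\ne(1,0)$ the unique available reduction, $\bar S^{-1}$ or $\bar T^{-1}$, returns a pair of $\mathcal{D}_f$ with second coordinate $n-m<n$ or $n-m'<n$; by induction that pair is $\hat F_f(C)$, so $(m,n)=\hat F_f(SC)$ or $\hat F_f(TC)$ by equivariance. Injectivity: if $\hat F_f(A)=\hat F_f(B)=(m,n)$ then either $(m,n)=(1,0)$, forcing $A=B=I$ because $(1,0)$ is not a child, or $(m,n)\ne(1,0)$ and its reducibility type together with Proposition~\ref{uniqueness of factors} forces $A,B$ to share their first letter; stripping it and using injectivity of $\bar S$ (resp.\ $\bar T$) reduces to the common parent of smaller second coordinate, and induction gives $A=B$. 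For the reverse implication I assume $f$ enumerable, so $\hat F_f$ is the invertible equivariant map (Lemma~\ref{lemma_F(I)=(1,0)}, Proposition~\ref{uniqueness of st pres map}). Given $(m,n)\ne(1,0)$, surjectivity gives $(m,n)=\hat F_f(A)$ with $A\ne I$, and by freeness $A$ begins with exactly one of $S,T$. If $A=SC$ then $(m,n)=\bar S(\hat F_f(C))$ is $S$-reducible; were it also $T$-reducible it would equal $\bar T(\hat F_f(C'))=\hat F_f(TC')$ for some $C'$, and injectivity of $\hat F_f$ would give $SC=TC'$, contradicting freeness of $SL_2(\mathbf{N}_0)$. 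So exactly one reducibility holds, which is the desired inequality; the case $A=TC$ is symmetric.

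The step I expect to demand the most care is the exact identification of $\mathbf{Im}(\bar S)$ and $\mathbf{Im}(\bar T)$ with the reducible sets, since this is what bridges the purely arithmetic inequality and the combinatorics of the tree. Its real content is the congruence $f(n)\equiv f(n-d)\pmod d$, which guarantees that undoing $\bar S$ or $\bar T$ never leaves $\mathcal{D}_f$; once that closure is in place the rest is bookkeeping resting on equivariance and the freeness of $SL_2(\mathbf{N}_0)$.
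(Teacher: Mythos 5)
Your argument is correct and takes essentially the same route as the paper: the congruence $f(n)\equiv f(n-d)\pmod{d}$ converts the two-sided inequality into the statement that exactly one of the two candidate parents $(m,n-m)$ and $\bigl(\tfrac{|f(n)|}{m},\,n-\tfrac{|f(n)|}{m}\bigr)$ lies in $\mathcal{D}_f$, and then freeness of $SL_2(\mathbf{N}_0)$ together with descent on the second coordinate yields the equivalence with invertibility. Your explicit strong induction for surjectivity and injectivity simply makes rigorous the paper's ``iterate the reduction and unwrap'' description of the same descent.
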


\begin{proof}
($\implies$:) \ Suppose $f$ is enumerable. Then there exists an invertible, equivariant map $F: SL_2(\mathbf{N}_0) \rightarrow \mathcal{D}_f$. Consider $ (m, n) \in \mathcal{D}_f \setminus \{(1,0)\}$. Since $F$ is onto, there exists $A \in SL_2(\mathbf{N}_0)$ such that $(m, n) = F(A)$. By Proposition~\ref{uniqueness of factors} and $F$ being invertible \textit{exactly one} of the following conditions hold

\begin{enumerate}
    \item $F(A) = F(SB) = \bar{S}(F(B))$ for some $B \in SL_2(\mathbf{N}_0)$
    \item $F(c(A)) = F(SB) = \bar{S}(F(B))$ for some $B \in SL_2(\mathbf{N}_0)$
\end{enumerate}

\

Note that

\

$(1) \iff F(B) = \bar{S}^{-1}(F(A)) = \bar{S}^{-1}(m, n) = (m, n-m) \in \mathcal{D}_f$.

\

$(2) \iff F(B) = \bar{S}^{-1}(\bar{c}_f(F(A))) = \bar{S}^{-1}(\frac{|f(n)|}{m}, n) = (\frac{|f(n)|}{m}, n - \frac{|f(n)|}{m}) \in \mathcal{D}_f$.

\

Of course, if $m \lvert f(n)$ then

$$m \lvert f(n-m)$$

and,

$$\frac{|f(n)|}{m} \ \lvert  \  f(n - \frac{|f(n)|}{m} )$$

meaning the only way $(1)$ or $(2)$ could fail is if $n - m < 0$ or $n - \frac{|f(n)|}{m} < 0$, respectively (as that would mean leaving $\mathbf{N}_0 \times \mathbf{N}_0$). Thus, exactly one of these two inequalities holds. This is equivalent to the conclusion of this direction.

\

$(\impliedby:)$ We will assume that for all $(m, n) \in \mathcal{D}_f \setminus \{(1, 0) \}$ 

$$\min \{m, \frac{|f(n)|}{m} \} \leq n < \max \{m, \frac{|f(n)|}{m} \}$$

and prove that $\hat{F}_f: SL_2(\mathbf{N}_0) \rightarrow \mathcal{D}_f$ is invertible. Since $f$ is nonvanishing on $\mathbf{N}_0$, $\hat{F}_f$ is well-defined. By definition, $\hat{F}_f(I) = (1,0)$. By the previous direction, we know our assumption is equivalent to saying that for all $(m, n) \in \mathcal{D}_f \setminus \{(1,0)\},$ exactly one of the following holds: $n - m \geq 0$ or $n - \frac{|f(n)|}{m} \geq 0$. This means that exactly one of the following holds: $\bar{S}^{-1}(m, n) \in \mathcal{D}_f$ or $\bar{S}^{-1}(\bar{c}_f(m,n)) \in \mathcal{D}_f$.

\

If we start with a given $(m, n) \in \mathcal{D}_f$ and iterate the process of "factoring out" an $\bar{S}^{-1}$ or $\bar{c}_f$, depending on which one keeps us in $\mathcal{D}_f$, we are guaranteed by our assumption to get an expression of the form

$$\bar{S}^{-\alpha_k} \bar{c}_f \bar{S}^{-\alpha_{k-1}} \bar{c}_f \cdots \bar{c}_f \bar{S}^{-\alpha_1} \bar{c}_f \bar{S}^{-\alpha_0}(m,n) = (1,0)$$

where each $\alpha_i \geq 0$. Then simply "unwrap" the expression by undoing the inverses to find $\hat{F}_f^{-1}(m, n) \in SL_2(\mathbf{N}_0)$.

\end{proof}

\

In fact, one can infer from this argument that

\begin{remark}\label{onto, one-to-one}

\begin{itemize}

\

    \item $\hat{F}_f$ is one-to-one $\iff n < \max \{m, \frac{|f(n)|}{m} \}$ for all $(m, n) \in \mathbf{Im}(\hat{F}_f)$ \\
    \item $\hat{F}_f$ is onto $\iff \min \{m, \frac{|f(n)|}{m} \} \leq n$ for all $(m, n) \in \mathcal{D}_f \setminus \{(1,0)\}$
\end{itemize}
\end{remark}

\

Note that the first inequality in Propositon~\ref{maincondition} is $(=)$ precisely when $(m, n) = (1, 1)$ since we showed $|f(0)| = 1$ and so $\gcd(n, f(n)) = 1$ for all $n \in \mathbf{N}_0$. Hence, if $m \lvert f(n)$ and $m = n$ we must have $(m, n) = (1,1)$.

\

We will use the condition in Proposition~\ref{maincondition} to prove Theorem~\ref{main}.  

\

\begin{lemma}
Suppose that $f \in \mathbf{Z}[x]$ is enumerable. Then $\deg(f) \geq 2$.
\end{lemma}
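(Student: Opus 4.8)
The plan is to argue by contradiction: assume $f$ is enumerable with $\deg(f) \leq 1$ and derive a violation of the criterion in Proposition~\ref{maincondition}. Throughout I will use that an enumerable $f$ is nonvanishing on $\mathbf{N}_0$ (Corollary~\ref{cor_nonvanishing}) and satisfies $|f(0)| = 1$ (established inside the proof of Lemma~\ref{lemma_F(I)=(1,0)}), and by Lemma~\ref{f -f lemma} I may assume the leading coefficient is positive.

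First I would dispose of the constant case. If $\deg(f) = 0$ then $f$ is a nonzero constant, and $|f(0)| = 1$ forces $f \equiv 1$. But then $|f(1)| = 1$ is not prime, contradicting Lemma~\ref{f(1), f(2) prime}. (Equivalently, the pair $(1,1) \in \mathcal{D}_f$ already fails $n < \max\{m, |f(n)|/m\}$.) Hence $\deg(f) \geq 1$.

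For the degree-one case, write $f(n) = an + b$ with $a \geq 1$; since $|f(0)| = |b| = 1$ we have $b = \pm 1$, and nonvanishing forces $a \geq 2$ when $b = -1$ (otherwise $f(1) = 0$). The idea is to exhibit a single pair $(m, n) \in \mathcal{D}_f \setminus \{(1,0)\}$ with $n \geq \max\{m, |f(n)|/m\}$, i.e. a factorization of $|f(n)|$ into two factors both at most $n$; this breaks the right-hand inequality of Proposition~\ref{maincondition} and so shows $f$ is not enumerable. The natural device is to force the fixed divisor $m = a+1$ (which is automatically coprime to $a$) by choosing $n$ in the residue class $n \equiv b \pmod{a+1}$, using $a \equiv -1 \pmod{a+1}$. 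Concretely, for $b = 1$ take $n = a+2$, where $f(a+2) = (a+1)^2$, so the pair $(a+1,\, a+2)$ has $\max\{a+1,\, a+1\} = a+1 < a+2 = n$. For $b = -1$ take $n = 2a+1$, where $f(2a+1) = (a+1)(2a-1)$, so the pair $(a+1,\, 2a+1)$ has $\max\{a+1,\, 2a-1\} = 2a-1 < 2a+1 = n$ (using $a \geq 2$ to guarantee $2a-1 \geq a+1$). In either case Proposition~\ref{maincondition} fails, so $f$ is not enumerable, and therefore $\deg(f) \geq 2$.

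The only real subtlety is the degree-one case, and the crux is producing, for each sign of $b$, an explicit $n$ at which $|f(n)|$ admits a factorization with both factors $\leq n$. I expect the main obstacle to be organizing the two sign cases uniformly rather than any deep difficulty: the choice $m = a+1$ reduces the divisibility $m \mid f(n)$ to a single linear congruence in $n$, after which one only checks that the cofactor $|f(n)|/m$ is smaller than $n$, which is a one-line inequality in each case.
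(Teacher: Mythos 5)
Your proof is correct and follows essentially the same route as the paper: rule out constants via $|f(0)|=1$, then kill the degree-one case by exhibiting an explicit pair violating the right-hand inequality of Proposition~\ref{maincondition}, with the identical witness $f(a+2)=(a+1)^2$ when $b=1$. The only differences are cosmetic: for $b=-1$ you use $f(2a+1)=(a+1)(2a-1)$ where the paper uses $f(a^3)=(a^2-1)(a^2+1)$, and in the constant case you invoke Lemma~\ref{f(1), f(2) prime} where the paper notes $F(S)=F(T)=(1,1)$; both variants are valid.
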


\begin{proof}
Suppose, towards a contradiction, that there exists an invertible, equivariant map $F: SL_2(\mathbf{N}_0) \rightarrow \mathcal{D}_f$ for some $f$ with $\deg(f) \leq 1$.

\

We first address the case $\deg(f) = 0$. We know $|f(0)| = 1$ so the only two possibilities for enumerable polynomials are the constant functions $\pm 1$. In either case, we have $F(S) = F(SI) =  \bar{S}(1,0) = (1,1)$ and $F(T) = F(TI) = \bar{T}_f(1,0) = (\bar{c}_f \circ \bar{S} \circ \bar{c}_f) (1,0) = (1,1)$, contradicting that $F$ is one-to-one.

\

We now address the case $\deg(f) = 1$. Since $|f(0)| = 1$, we can write $f(n) = an \pm 1$ for some $a \in \mathbf{N}$ (we're allowed to make the restriction to $\mathbf{N}$ by Lemma~\ref{f -f lemma}). We find $(m, n) \in \mathcal{D}_f$ such that the inequalities in Proposition~\ref{maincondition} fail.

\

For $f(n) = an+1$, take $n = a+2$. Then $f(a+2) = (a+1)^2 \implies (m, n) := (a+1, a+2) \in \mathcal{D}_f$. Then, $\max \{m, \frac{|f(n)|}{m} \} = a+1 < a+2$, contradicting the right inequality in Proposition~\ref{maincondition}.

\

For $f(n) = an-1$, take $n = a^3$. Then $f(a^3) = a^4 - 1 = (a^2 - 1)(a^2 + 1) \implies (a^2 + 1, a^3) \in \mathcal{D}_f$. $\max \{a^2-1, a^2+1 \} = a^2 + 1 < a^3$ for all $a \geq 2$. Hence, it remains to check $a = 1$ (i.e. $f(n) = n-1$). But then $f(1) = 0$ which contradicts $f$ being nonvanishing on $\mathbf{N}_0$.
\end{proof}

\begin{lemma}
Suppose $f$ is monic and $\deg(f) = 2$. Then $f$ is enumerable if and only if $f \in \{\phi_0, \phi_1, \psi_2, \phi_3\}$
\end{lemma}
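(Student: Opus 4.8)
The plan is to reduce everything to the single criterion of Proposition~\ref{maincondition}. Since an enumerable $f$ satisfies $|f(0)| = 1$ (established in the proof of Lemma~\ref{lemma_F(I)=(1,0)}) and $f$ is monic of degree $2$, its constant term must be $\pm 1$, so $f = \phi_b$ or $f = \psi_b$ for some $b \in \mathbf{Z}$. It therefore suffices to decide, for each sign of the constant term and each $b \in \mathbf{Z}$, whether the inequality $\min\{m, |f(n)|/m\} \le n < \max\{m, |f(n)|/m\}$ holds for every $(m,n) \in \mathcal{D}_f \setminus \{(1,0)\}$. I record the criterion in unordered form: writing $|f(n)| = dD$ with $d \le D$, it demands $d \le n < D$ for every such factorization.

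For sufficiency I would verify the criterion for the four candidates by a square-root bracketing argument. For $\phi_0$ and $\phi_1$ one checks $n < \sqrt{f(n)} < n+1$ for $n \ge 1$, so every small factor $d \le \sqrt{f(n)}$ is an integer $< n+1$, hence $d \le n$, while every large factor $D \ge \sqrt{f(n)} > n$ satisfies $D > n$; the pair $(1,0)$ is excluded. For $\psi_2$ the same bracketing $n < \sqrt{f(n)} < n+1$ holds for $n \ge 1$, and $\psi_2(n) = (n+1)^2 - 2$ is never a perfect square, so the middle factor is strictly below $\sqrt{f(n)}$. For $\phi_3$ one has $n+1 < \sqrt{f(n)} < n+2$, so the only integer in $(n, \sqrt{f(n)}]$ is $n+1$; since $\phi_3(n) \equiv -1 \pmod{n+1}$ we have $n+1 \nmid f(n)$, and the criterion holds. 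By Proposition~\ref{maincondition} each of these four is enumerable. Finally, every $\Phi_\beta$ and $\Psi_2$ is equivariant and sends $I$ to $(1,0)$, so by the uniqueness of equivariant maps they coincide with $\hat{F}_{\phi_\beta}$ and $\hat{F}_{\psi_2}$, giving the stated identifications.

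For necessity I would rule out every remaining $b$ by exhibiting one offending pair. The case $b \le -1$ is handled uniformly: $f(-b) = (-b)^2 + b(-b) \pm 1 = \pm 1$, so the pair $(1,-b) \in \mathcal{D}_f \setminus \{(1,0)\}$ (as $-b \ge 1$) fails $n < \max = 1$, and no $f$ with negative middle coefficient is enumerable. For $b \ge 0$ I combine Lemma~\ref{f(1), f(2) prime} with two explicit factorizations. In the $\phi_b$ family, $b=2$ is excluded because $|\phi_2(1)| = 4$ is not prime (indeed $\phi_2(n) = (n+1)^2$), and for $b \ge 4$ the identity $\phi_b(b-3) = (b-2)(2b-5)$ yields the pair $(b-2,\, b-3)$ whose smaller factor $b-2$ exceeds $n = b-3$, violating $\min \le n$; this leaves $b \in \{0,1,3\}$. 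In the $\psi_b$ family, $b=0$ is excluded since $\psi_0(1)=0$ is vanishing and $b=1$ since $|\psi_1(1)| = 1$ is not prime, while for $b \ge 3$ the identity $\psi_b(b-1) = b(2b-3)$ gives the pair $(b,\, b-1)$ with smaller factor $b > n = b-1$; this leaves only $b=2$. The survivors are exactly $\{\phi_0, \phi_1, \psi_2, \phi_3\}$.

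The main obstacle is necessity for large $b$, where no single small evaluation such as $|f(1)|$ or $|f(2)|$ suffices (for example $\phi_9$ passes both primality tests). The trick is to select the right near-balanced factorization: the substitutions $n = b-3$ for $\phi_b$ and $n = b-1$ for $\psi_b$ are engineered so that the linear value $d = n+1$ divides $f(n)$ — the residue $f(-1) \bmod (n+1)$ is by construction a multiple of $n+1$ — and, crucially, so that this $d$ is the \emph{smaller} cofactor yet still strictly exceeds $n$, which is precisely what Proposition~\ref{maincondition} forbids. Checking the inequalities $d \le D$ and $d > n$ in these families and confirming the boundary values of $b$ at which the constructed $n$ would fall below $1$ is the only genuinely delicate bookkeeping.
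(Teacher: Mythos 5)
Your proposal is correct and follows the same overall strategy as the paper: reduce everything to the criterion of Proposition~\ref{maincondition}, use $|f(0)|=1$ to restrict to the families $\phi_b$ and $\psi_b$, invoke Lemma~\ref{f(1), f(2) prime} for the small exceptional values of $b$, and kill each remaining $b$ with one explicit offending divisor pair. The sufficiency half is the paper's inequality argument repackaged as square-root bracketing; in particular your treatment of $\phi_3$ via $n+1 \nmid \phi_3(n)$ is equivalent to the paper's step forcing $m = \frac{|\phi_3(n)|}{m} = n+1$ and hence $n=0$. The one genuine divergence is the elimination of $\phi_b$ for large $b$: the paper splits on the parity of $b$, using primality of $|f(1)| = b+2$ for even $b$ and the perfect-square value $\phi_b(n_0) = (n_0 + \frac{b-1}{2})^2$ (a fixed point of $\bar{c}_f$) for odd $b>3$, whereas you use the single uniform factorization $\phi_b(b-3) = (b-2)(2b-5)$, whose smaller factor $b-2$ exceeds $n=b-3$ for every $b \ge 4$. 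Your version is more uniform and exactly parallels the $\psi_b(b-1) = b(2b-3)$ computation that both you and the paper use for the $\psi$ family; the paper's version has the incidental benefit of exhibiting the square values of $\phi_b$ and the fixed-point mechanism explicitly. Both arguments are complete, and your handling of negative middle coefficients via the pair $(1,-b)$ is just the paper's forms $(3)$ and $(4)$ rephrased through the criterion rather than through the complement fixed point.
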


\begin{proof}

($\implies:$)
Write $f(n) = n^2 \pm bn \pm 1$ with $b > 0$. Then we have the following forms to consider

\begin{enumerate}
    \item $f(n) = n^2 + bn + 1$
    \item $f(n) = n^2 + bn - 1$
    \item $f(n) = n^2 - bn + 1$
    \item $f(n) = n^2 - bn - 1$
\end{enumerate}

$(3)$ and $(4)$ can be eliminated by remarking that for both forms $|f(b)| = 1$ and so $(1, b) = (|f(b)|, b) 
= \bar{c}_f(1, b)$ which contradicts $F$ being one-to-one.

\

For form $(1)$, if $b$ is even, we have that $f(1) = b+2$ is also even, and since $|f(1)|$ must be prime (Lemma~\ref{f(1), f(2) prime}), we get $|b+2| = 2$ and so $b = 0$ is the only admissible even value of $b$. For the case where $b$ is odd, we can check that

$$f(n_0) = n_0^2 + bn_0 + 1 = (n_0 + \frac{b-1}{2})^2$$

where $n_0 := (\frac{b-1}{2})^2 - 1$. If $b > 3$ and odd we have that $n_0 > 0$ and so 

$$\bar{c}_f(n_0 + \frac{b-1}{2}, \ n_0) =  (n_0 + \frac{b-1}{2}, n_0)$$

implies $f(n) = n^2 + bn + 1$ is not enumerable for $b > 3$ and odd. Thus for form $(1)$ we have narrowed the possibilities down to $\phi_0$, $\phi_1$ and $\phi_3$.

\

For form $(2)$, set $n = b-1$. Then, $f(b-1) = b(2b-3)$ meaning the first inequality of proposition~\ref{maincondition} fails for all $b > 2$ since we have

$$\min \{b, 2b-3 \} > b-1$$

\

Thus, it remains to check $b=1$ for form $(2)$, or whether $f(n) = n^2 + n - 1$ is enumerable. However, $f(1) = 1$, which contradicts $\hat{F}_f$ being one-to-one. Thus, we have narrowed the possibilities down to $\psi_2$.

\

($\impliedby:$) \ We now apply Proposition~\ref{maincondition} to prove that each of the polynomials $f \in \{\phi_0, \phi_1, \psi_2, \phi_3\}$ is enumerable by showing that all pairs $(m, n) \in \mathcal{D}_f \setminus \{(1,0)\}$ satisfy the bounds

\

$$\min \{m, \frac{|f(n)|}{m} \} \leq n < \max \{m, \frac{|f(n)|}{m} \}$$

\

($\phi_0:$) \ Consider $(m, n) \in \mathcal{D}_{\phi_0} \setminus \{(1,0)\}$. If the right-side inequality fails, i.e. $n \geq m$ and $n \geq \frac{|\phi_0(n)|}{m}$

\

$$|\phi_0(n)| = |n^2 + 1| = m \frac{|\phi_0(n)|}{m} \leq n \cdot n = n^2$$

Which is false for all $n \in \mathbf{N}_0$.

\

If the left-side inequality fails, i.e. $m \geq n+1$ and $\frac{|\phi_0(n)|}{m} \geq n+1$

\

$$|\phi_0(n)| = |n^2 + 1| = m \frac{|\phi_0(n)|}{m} \geq (n+1)^2$$

which is always false on $\mathbf{N}_0$, except at $n = 0$ which corresponds to the pair $(1,0)$. Thus, by Proposition~\ref{maincondition}, $\phi_0$ is enumerable.

\

\

($\phi_1:$) Consider $(m, n) \in \mathcal{D}_{\phi_1} \setminus \{(1,0)\}$.
If for some pair $(m, n) \in \mathcal{D}_f$ the right-side inequality fails, we get the same contradiction as for $\phi_0$.

\

If the right-side inequality fails i.e. $m \geq n+1$ and $\frac{|\phi_1(n)|}{m} \geq n+1$

$$|\phi_1(n)| = |n^2 + n + 1| = m\frac{|\phi_1(n)|}{m} \geq (n+1)^2$$

which is always false on $\mathbf{N}_0$, except at $n = 0$ which corresponds to the pair $(1,0)$. Thus, by Proposition~\ref{maincondition}, $\phi_1$ is enumerable.

\

\

($\psi_2:$) Consider $(m, n) \in \mathcal{D}_{\psi_2} \setminus \{(1,0)\}$. If the right-side inequality fails

$$|\psi_2(n)| = |n^2 + 2n - 1| = m \frac{|\psi_2(n)|}{m} \leq n \cdot n = n^2$$

which is false for all $n \in \mathbf{N}_0$ except $n=0$, which corresponds to the pair $(1,0)$.

\

If the left-side inequality fails, i.e. $m \geq n+1$ and $\frac{|\psi_2(n)|}{m} \geq n+1$

$$|\psi_2(n)| = |n^2 + 2n - 1| = m\frac{|\psi_2(n)|}{m} \geq (n+1)^2$$

which is false for all $n \in \mathbf{N}_0$. Thus, by Proposition~\ref{maincondition}, $\psi_2$ is enumerable.

\

\

($\phi_3:$) Consider $(m, n) \in \mathcal{D}_{\phi_3} \setminus \{(1,0)\}$. If the right-side inequality fails, we get the same contradiction as for $\phi_0$ and $\phi_1$.

\

If the left-side inequality fails i.e. $m \geq n + 1$ and $\frac{|\phi_3(n)|}{m} \geq n+1$, we have $n^2 + 3n + 1 \geq (n+1)^2$. This will be true in general on $\mathbf{N}_0$. However, it will also be true that

$$n^2 + 3n + 2 = (n+1)(n+2) > |n^2 + 3n + 1| \geq (n+1)^2$$

Thus, we get

$$(n+1)(n+2) > m  \frac{|\phi_3(n)|}{m} \geq (n+1)^2$$

and since $m$ and $\frac{f(n)}{m}$ are necessarily integers, both $\geq n+1$, we have that $m = \frac{|\phi_3(n)|}{m} = n+1 \implies (n+1)^2 = |n^2 + 3n + 1| \implies n = 0$. Once again, this corresponds to the pair $(1,0)$. Thus, by Proposition~\ref{maincondition}, $\phi_3$ is enumerable.
\
\end{proof}

\begin{lemma}\label{carlslemma}
Suppose $f \in \mathbf{Z}[x]$ is enumerable . Then $\deg(f) \leq 2$ and if $\deg(f) = 2$ then $f$ has leading coefficient $\pm 1$.
\end{lemma}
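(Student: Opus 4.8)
I reduce the statement to exhibiting a single ``middle divisor.'' By Lemma~\ref{f -f lemma} I may assume $f$ has positive leading coefficient $a$; write $d=\deg f$, and recall from Lemma~\ref{lemma_F(I)=(1,0)} that the constant term is $\pm1$ and from Corollary~\ref{cor_nonvanishing} that $f$ is nonvanishing on $\mathbf N_0$. The plan is to contrapose Proposition~\ref{maincondition}: its forward direction says that if $f$ is enumerable then every $(m,n)\in\mathcal D_f\setminus\{(1,0)\}$ satisfies $\min\{m,|f(n)|/m\}\le n$. Hence it suffices, whenever $d\ge 3$ or ($d=2$ and $a\ge2$), to exhibit one $n\ge1$ together with a prime $\ell\mid f(n)$ lying strictly in the window $n<\ell<|f(n)|/n$; the pair $(\ell,n)$ then violates the inequality (its cofactor $|f(n)|/\ell$ is forced into the same window, both exceeding $n$), so $f$ is not enumerable. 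Granting this for the two regimes, the lemma follows: $d\ge3$ is impossible, and for $d=2$ the positive leading coefficient must be $1$, i.e.\ $\pm1$ before normalization.

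The reason such a window can be populated is growth. Since the constant term is $\pm1$, we have $f(n)\sim an^{d}$, so the window $(n,|f(n)|/n)$ has multiplicative width $\asymp an^{\,d-2}$: it widens without bound when $d\ge3$, and for $d=2$ it has width $\asymp a$, a genuine interval precisely when $a\ge2$. (For monic quadratics the window $(n,\,n+b+c/n)$ has bounded length, which is exactly why $\phi_0,\phi_1,\phi_3$ escape.) What remains is to drop an actual prime factor of some $f(n)$ into the window.

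For $d\ge3$ I would argue by Hensel lifting. A nonconstant polynomial has a root modulo infinitely many primes, so choose a prime $\ell\nmid a\cdot\mathrm{disc}(f)$ at which $f$ has a simple root, and let $\alpha\in\mathbf Z_\ell$ be its lift. Because $f$ does not vanish on $\mathbf N_0$, $\alpha$ is not a nonnegative integer, so the $\ell$-adic digits of $\alpha$ are nonzero arbitrarily far out; whenever the top digit of the truncation $\rho_j:=\alpha\bmod\ell^{j}\in[0,\ell^{j})$ is nonzero we have $\rho_j\ge \ell^{\,j-1}$. Taking $n=\rho_j$ and the divisor $\ell^{j}$, I get $\ell^{j}\mid f(\rho_j)$ and $n<\ell^{j}$, while $|f(\rho_j)|/\rho_j\ge \tfrac12\rho_j^{\,d-1}\ge \tfrac12\ell^{(j-1)(d-1)}>\ell^{j}$ for $j$ large, using $(j-1)(d-1)>j$ when $d\ge3$. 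Thus $\ell^{j}$ sits in the window and furnishes the required violation.

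The case $d=2,\ a\ge2$ is where I expect the real difficulty. Here $f(n)=an^{2}+bn+c$ with $c=\pm1$, and the prime-power trick above is useless because a single power jumps by a factor $\ell$, overshooting the width-$a$ window. Instead I would look for a prime $\ell$ together with a root $\rho$ of $f$ modulo $\ell$ landing in the top band $\big((\ell+|b|)/a,\ \ell\big)$: then $n=\rho$ gives $n<\ell<an<|f(n)|/n$ directly. Equivalently, writing $a f(n)=g(an)$ with $g(u)=u^{2}+bu+ac$ monic, one lifts a root of $g$ modulo $\ell$ to a multiple $N=an$ of $a$ with $N\in(\ell,a\ell)$ via the Chinese remainder theorem. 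Infinitely many $\ell$ carry a root (the discriminant $b^{2}-4ac$ is a quadratic residue for a positive density of primes), and the hypothesis $a\nmid c=\pm1$ rules out the degenerate alignment in which both roots are pushed out of the band. Pinning down that a root genuinely falls in this comparatively narrow band for infinitely many $\ell$ is the crux of the whole lemma; the awkward sub-cases $c=-1$ and $a$ a perfect square (where the residue argument is least transparent) can alternatively be handled by manufacturing a value $f(n)=y^{2}$ through a Pell equation seeded by $f(0)$ and using the balanced divisor $y\approx\sqrt a\,n\in(n,an)$.
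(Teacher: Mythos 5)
Your reduction is the same as the paper's: by the forward direction of Proposition~\ref{maincondition} it suffices to produce one pair $(m,n)\in\mathcal D_f$ with $\min\{m,|f(n)|/m\}>n$. Your execution, however, diverges, and it is not complete. The $\deg f\ge 3$ branch via Hensel lifting is essentially sound (take $n=\alpha\bmod \ell^j$ with top digit nonzero, so $\ell^{j-1}\le n<\ell^j\mid f(n)$ and the cofactor also exceeds $n$ once $(j-1)(d-1)>j$), modulo the small caveat that if $f$ is not squarefree you must run the argument on an irreducible factor rather than excluding primes dividing $\operatorname{disc}(f)=0$. But the case $\deg f=2$ with leading coefficient $a\ge 2$ --- which you yourself identify as ``the crux of the whole lemma'' --- is left unproven. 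Finding a prime $\ell$ carrying a root of $f$ in the band $\bigl((\ell+|b|)/a,\ \ell\bigr)$ is a statement about the distribution of roots of quadratic congruences to prime moduli; it is true but far from elementary (it is the territory of Duke--Friedlander--Iwaniec), and the Pell-equation fallback is only gestured at. As written, the lemma is not established in exactly the regime where the window has bounded multiplicative width.

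The paper avoids all of this with a single elementary construction (due to Schildkraut) that treats both regimes uniformly: choose $a\in\mathbf N$ large enough that $|f(-a)|>3a$ and $f(n)>\tfrac32 n^2$ for $n>2a$ (possible exactly because $f$ grows at least like $2n^2$), and set $n_0:=|f(-a)|-a$. Then $f(n_0)\equiv f(-a)\equiv 0\pmod{|f(-a)|}$, so $f(n_0)=(n_0+a)(n_0+b)$ with $b\in\mathbf Z$, and $f(n_0)>\tfrac32 n_0^2>n_0(n_0+a)$ forces $b>0$; both factors exceed $n_0$ and the left inequality of Proposition~\ref{maincondition} fails. No primes, no Hensel, no equidistribution. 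If you want to salvage your route, you would need to either supply the missing input on roots of quadratic congruences or replace the quadratic case with an argument of this congruence type; the cleanest fix is simply to adopt the evaluation-at-$-a$ trick, which subsumes your $\deg f\ge3$ case as well.
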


\begin{proof}
First, we'll use Lemma~\ref{f -f lemma} to narrow our attention to $f$ with positive leading coefficients.

\

We will show that if $f \in \mathbf{Z}[x]$ with either $\deg(f) \geq 3$ or $\deg(f) = 2$ and leading coefficient at least $2$, then there exists $n_0 \in \mathbf{N}$ such that $f(n_0) = (n_0+a)(n_0+b)$ for $a, b \in \mathbf{N}$. From this, it follows that no such $f$ can be enumerable, since the left-side inequality is violated

$$\min \{n+a, n+b \} > n$$

\

The following argument is due to Carl Schildkraut \cite{carlslemma}

\

Pick a positive integer $a$, large enough so that

\begin{enumerate}
    \item $|f(-a)| > 3a$, and
    \item $f(n) > \frac{3}{2}n^2$ for all $n > 2a$.
\end{enumerate}

We can choose such an integer $a$ since $f$ grows at least on the order of $2|n|^2$ as $|n| \rightarrow \infty$.

\

We choose $n_0 := |f(-a)| - a$. Note that $n_0 > 2a$ by $(1)$. In particular, $n_0 > 0$. We find

$$f(n_0) = f(|f(-a)|-a) \equiv f(-a) \equiv 0 \text{ (mod } |f(-a)|)$$

As a result, we can write

$$f(n_0) = |f(-a)|(n_0+b) = (n_0+a)(n_0+b)$$

for some $b \in \mathbf{Z}$. We want to show $b > 0$. Indeed, we have

$$f(n_0) > \frac{3}{2}n_0^2 > n_0(n_0 + a)$$

by $(2)$ and then $(1)$. Therefore $b > 0$ and the conclusion follows.
\end{proof}

\section{Some examples}\label{section_examples}

\begin{theorem}\label{divisorcount}

Let $\tau(n)$ denote the divisor counting function. Then for $n \in \mathbf{N}_0$
\

\begin{flalign*}
&& \# \left\{ \begin{pmatrix} a & b \\
    c & d \\
    \end{pmatrix} \in SL_2(\mathbf{N}_0) : ac + bd = n \right\} &= \tau(n^2 + 1) &
\end{flalign*}
    
\begin{flalign*}
&& \# \left\{ \begin{pmatrix} a & b \\
    c & d \\
    \end{pmatrix} \in SL_2(\mathbf{N}_0) : ac + bc + bd = n \right\} &= \tau(n^2 + n + 1) &
\end{flalign*}
    
\begin{flalign*}
&& \# \left\{ \begin{pmatrix} a & b \\
    c & d \\
    \end{pmatrix} \in SL_2(\mathbf{N}_0) : ac + 3bc + bd = n \right\} &= \tau(n^2 + 3n + 1) &
\end{flalign*}

\begin{flalign*}
&& \# \left\{ \begin{pmatrix} a & b \\
    c & d \\
    \end{pmatrix} \in SL_2(\mathbf{N}_0) : \max \{ac, bd \} + 2bc - \min \{ac, bd\} = n \right\} &= \tau(n^2 + 2n - 1) &
\end{flalign*}

\end{theorem}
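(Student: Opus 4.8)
The plan is to recognize each of the four counting conditions as a single fiber of the second-coordinate projection, transported through one of the bijections classified in Theorem~\ref{main}. First I would observe that the second pair component of $\Phi_\beta(A)$ is exactly $ac + \beta bc + bd$, so the conditions $ac + bd = n$, $ac + bc + bd = n$, and $ac + 3bc + bd = n$ are precisely the conditions that the second coordinate of $\Phi_0(A)$, $\Phi_1(A)$, and $\Phi_3(A)$ respectively equals $n$. Likewise, the second component of $\Psi_2(A)$ is $\max\{ac,bd\} + 2bc - \min\{ac,bd\}$, which matches the fourth condition. Thus each of the four claims counts the matrices $A \in SL_2(\mathbf{N}_0)$ that the relevant map sends to a divisor pair with fixed second coordinate $n$.

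Next I would invoke Theorem~\ref{main}: the polynomials $\phi_0, \phi_1, \phi_3, \psi_2$ are enumerable with $\hat{F}_{\phi_0} = \Phi_0$, $\hat{F}_{\phi_1} = \Phi_1$, $\hat{F}_{\phi_3} = \Phi_3$, and $\hat{F}_{\psi_2} = \Psi_2$. Since enumerability forces invertibility, each of these maps is a bijection from $SL_2(\mathbf{N}_0)$ onto the corresponding divisor pair set $\mathcal{D}_f$. Consequently, restricting the bijection to the matrices whose image has second coordinate $n$ yields a bijection onto the set $\{\, m \in \mathbf{N}_0 : (m, n) \in \mathcal{D}_f \,\}$, so the desired matrix count equals $\#\{\, m \in \mathbf{N}_0 : m \mid f(n)\,\}$.

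It then remains to count these $m$ for fixed $n$. By Corollary~\ref{cor_nonvanishing} each $f$ appearing here is nonvanishing on $\mathbf{N}_0$, so $f(n) \neq 0$; hence $m = 0$ is excluded and $m$ ranges exactly over the positive divisors of $|f(n)|$, of which there are $\tau(|f(n)|)$. For $f \in \{\phi_0, \phi_1, \phi_3\}$ one checks $f(n) > 0$ on all of $\mathbf{N}_0$, so $\tau(|f(n)|) = \tau(f(n))$ gives the stated values $\tau(n^2+1)$, $\tau(n^2+n+1)$, and $\tau(n^2+3n+1)$. For $\psi_2$ one has $\psi_2(n) > 0$ for $n \geq 1$ while $\psi_2(0) = -1$, so reading $\tau(n^2+2n-1)$ as $\tau(|n^2+2n-1|)$ covers the $n = 0$ case, where both sides equal $1$.

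I do not expect a deep obstacle: the statement is essentially a corollary of the bijectivity established in Theorem~\ref{main} combined with the identification of each listed expression as a second pair component. The only points demanding care will be the bookkeeping just described—excluding the divisor $m = 0$ via nonvanishing, and handling the absolute value (equivalently, the single negative value $\psi_2(0) = -1$) so that the divisor count $\tau$ is always applied to a positive integer.
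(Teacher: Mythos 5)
Your proposal is correct and takes essentially the same route as the paper, which simply states that the theorem follows immediately from the enumerability of $\phi_0, \phi_1, \psi_2, \phi_3$ via $\Phi_0, \Phi_1, \Psi_2, \Phi_3$ established in Theorem~\ref{main}. Your write-up just makes explicit the fiber-counting bookkeeping (identifying each expression as a second pair component, excluding $m = 0$ via nonvanishing, and handling $\psi_2(0) = -1$) that the paper leaves implicit.
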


\begin{proof}
This follows immediately from our proof that every $f \in \{\phi_0, \phi_1, \psi_2, \phi_3 \}$ is enumerated by its corresponding function in the families $\Phi$ or $\Psi$.
\end{proof}

Restricting to the interior of $SL_2(\mathbf{N}_0)$, namely to the set $SL_2(\mathbf{N})$ (which corresponds to nontrivial factorization pairs) we show

\begin{corollary}
    \item $\phi_0(n) = n^2 + 1$ is prime $\iff \nexists \begin{pmatrix} a & b \\
    c & d \\
    \end{pmatrix} \in SL_2(\mathbf{N})$ such that $ac + bd = n$ \\
    \item $\phi_1(n) = n^2 + n + 1$ is prime $\iff \nexists \begin{pmatrix} a & b \\
    c & d \\
    \end{pmatrix} \in SL_2(\mathbf{N})$ such that $ac + bc + bd = n$ \\
    \item $\psi_2(n) = n^2 + 2n - 1$ is prime $\iff \nexists \begin{pmatrix} a & b \\
    c & d \\
    \end{pmatrix} \in SL_2(\mathbf{N})$ such that $\max \{ac, bd\} +2bc - \min \{ac, bd\} = n$ \\
    \item $\phi_3(n) = n^2 + 3n + 1$ is prime $\iff \nexists \begin{pmatrix} a & b \\
    c & d \\
    \end{pmatrix} \in SL_2(\mathbf{N})$ such that $ac + 3bc + bd = n$ \\

\end{corollary}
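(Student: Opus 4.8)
The plan is to treat all four statements uniformly, since each enumerable polynomial $f \in \{\phi_0, \phi_1, \psi_2, \phi_3\}$ is enumerated by the corresponding bijection $\hat{F}_f \in \{\Phi_0, \Phi_1, \Psi_2, \Phi_3\}$ (Theorem~\ref{main}), and in each of the four cases the expression appearing in the corollary is exactly the second coordinate of $\hat{F}_f(A)$. I would first observe that fixing this second coordinate to equal $n$ selects, under the bijection $\hat{F}_f : SL_2(\mathbf{N}_0) \to \mathcal{D}_f$, precisely the fiber of divisor pairs $\{(m, n) : m \mid f(n)\}$, so that matrices $A$ realizing the prescribed value $n$ are in bijection with the divisors $m$ of $|f(n)|$. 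This is the content already packaged in Theorem~\ref{divisorcount}.

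The key step is to separate, within this fiber, the interior matrices $SL_2(\mathbf{N})$ from the boundary matrices. I would invoke the correspondence developed in Section~\ref{section_class_statement}: the boundary matrices of the enumeration tree are exactly those with a zero entry, which a short determinant argument shows to be precisely the powers $S^\alpha$ and $T^\alpha$, while the interior is $SL_2(\mathbf{N})$ (Remark~\ref{remark_interior}). A direct computation in each family gives $\hat{F}_f(S^n) = (1, n)$ and $\hat{F}_f(T^n) = (|f(n)|, n)$, so by injectivity of $\hat{F}_f$ these are the only boundary matrices in the fiber over $n$, and they account for exactly the two trivial divisors $1$ and $|f(n)|$. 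Consequently the matrices of $SL_2(\mathbf{N})$ realizing the value $n$ correspond exactly to the nontrivial divisors of $|f(n)|$, i.e. those $m$ with $1 < m < |f(n)|$.

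From here the biconditional is immediate for $n \geq 1$. For such $n$ one checks $|f(n)| \geq 2$ in each of the four families, so the two trivial divisors are distinct; then $f(n)$ is prime if and only if it has no nontrivial divisor, if and only if the fiber over $n$ contains no matrix of $SL_2(\mathbf{N})$, which is exactly the asserted nonexistence statement. I would close by flagging the degenerate case $n = 0$, where $|f(0)| = 1$ is a unit (neither prime nor possessing a nontrivial factorization) and no matrix of $SL_2(\mathbf{N})$ satisfies the equation, so the four equivalences are understood to range over $n \geq 1$.

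The main obstacle is bookkeeping rather than conceptual: verifying, for each of the four maps $\Phi_0, \Phi_1, \Phi_3, \Psi_2$ separately (with the $\Psi_2$ case requiring care with the $\max/\min$ terms), that $S^\alpha$ and $T^\alpha$ land on the two trivial pairs, so that the identification of $SL_2(\mathbf{N})$ with the nontrivial factorizations is airtight. Once that computation is recorded, the primality criterion follows formally from the bijectivity of $\hat{F}_f$ together with the characterization of a prime as an integer exceeding $1$ admitting no nontrivial factorization.
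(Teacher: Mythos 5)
Your proposal is correct and follows essentially the same route the paper takes: it combines the bijectivity of $\hat{F}_f$ (Theorem~\ref{main} and Theorem~\ref{divisorcount}) with the observation that the boundary matrices $S^\alpha$, $T^\alpha$ map to the trivial pairs $(1,n)$ and $(|f(n)|,n)$ while the interior $SL_2(\mathbf{N})$ corresponds exactly to the nontrivial factorizations. Your explicit flagging of the degenerate case $n=0$ (where $|f(0)|=1$) is a point of care the paper leaves implicit.
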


\

For a given $(m, n) \in \mathcal{D}_f$, we describe the procedure for computing $\hat{F}_f^{-1}(m, n) \in SL_2(\mathbf{N}_0)$.

\

\begin{algorithm}\label{algorithm}\label{computinginverses}
Let $f \in \mathbf{Z}[x]$ be enumerable and let $(m,n) \in \mathcal{D}_f$. To find $\hat{F}_f^{-1}(m, n) \in SL_2(\mathbf{N}_0)$ apply the following steps

\begin{enumerate}
    \item $\text{While } (m, n) \neq (1,0) \text{ do:  } (m,n) \rightarrow \bar{c}_f\bar{S}^{- \lfloor \frac{n}{m} \rfloor}(m, n)$.
    \item Record the steps $- \lfloor \frac{n}{m} \rfloor$ to find the unique sequence of $\bar{S}, \bar{c}_f$ that generates $(m,n)$.
    \item Convert this into a sequence of $S$ and $T_f$ using the relation $\bar{T}_f = \bar{c}_f \circ \bar{S} \circ \bar{c}_f$. If necessary, change $(1,0)$ to $\bar{c}_f(1,0)$.
    \item Convert this into a sequence of $S$ and $T$.
    \item Multiply the matrices.
\end{enumerate}

\end{algorithm}

\begin{example}
\end{example}

As a demonstration of Algorithm~\ref{computinginverses}, we do an inverse calculation.

\

We take one of the enumerable polynomials, say  $\phi_1(n) = n^2 + n + 1$. Then $\phi_1(100) = 10101$. One can check that $37 \ \lvert \ 10101$ and so, $(37, 100) \in \mathcal{D}_{\phi_1}$. We know from the uniqueness of invertible, equivariant maps, that $\hat{F}_{\phi_1} = \Phi_1$. Let us compute $\Phi_1^{-1}(37,100)$. 

\

The first step is to reduce $(37, 100)$ to the root pair $(1,0)$ via the operations $\bar{S}^{-1}$ and $\bar{c}_f$. We record the intermediate steps.

\

\

$\bar{S}^{-2}(37, 100) = (37, 100 - 2 \times 37) = (37, 26)$

\

$\bar{c}_f(37,26) = (\frac{26^2 + 26 + 1}{37}, 26) = (19, 26)$.

\

$\bar{S}^{-1}(19, 26) = (19, 7)$.

\

$\bar{c}_f(19,7) = (3,7)$

\

$\bar{S}^{-2}(3,7) = (3,1)$

\

$\bar{c}_f(3,1) = (1,1)$

\

$\bar{S}^{-1}(1,1)  = (1,0)$

\

and we are done. Note that each pair we went through along the way belongs to the set $\mathcal{D}_{\phi_1}$ (by Proposition~\ref{invariance}). Now, since we found that

$$(1,0) = \bar{S}^{-1}\bar{c}_f\bar{S}^{-2}\bar{c}_f\bar{S}^{-1}\bar{c}_f\bar{S}^{-2}(37,100)$$

we apply $\bar{S}$ and $\bar{c}_f$ on the right i.e. undo the inverses to get

$$(37,100) = \bar{S}^2\bar{c}_f\bar{S}\bar{c}_f\bar{S}^2\bar{c}_f\bar{S}(1,0)=\bar{S}^2\bar{c}_f\bar{S}\bar{c}_f\bar{S}^2\bar{c}_f\bar{S}\bar{c}_f(1,0)$$

Note we added an extra $\bar{c}_f$ on the far-right, which is allowed since $\bar{c}_f(1,0) = (1,0)$.

$$(37, 100) = \bar{S}^2\bar{T}_f\bar{S}^2\bar{T}_f(1,0)$$

We can now find the corresponding matrix in $SL_2(\mathbf{N}_0)$

$$\Phi_1^{-1}(37, 100)= \Phi_1^{-1}(\bar{S}^2\bar{T}_f\bar{S}^2\bar{T}_f(1,0)) = S^2TS^2T (I)$$

\

Now compute the matrix product.

$$S^2TS^2T = \begin{pmatrix}
    3 & 4 \\
    8 & 11 \\
\end{pmatrix} \in SL_2(\mathbf{N}_0)$$

Indeed, $(a^2 + ab + b^2, ac + bc + bd) = (37, 100)$.

\

We can also find the $\Phi$ and $\Psi$ family "relatives" of the pair $(37,100) \in \mathcal{D}_{\phi_1}$ which all correspond to the same matrix in $SL_2(\mathbf{N}_0)$.

$$ \Phi_0 \begin{pmatrix}
    3 & 4 \\
    8 & 11 \\
\end{pmatrix} = (25, 68)$$

$$ \Psi_2 \begin{pmatrix}
    3 & 4 \\
    8 & 11 \\
\end{pmatrix} = (31, 84)$$

$$\Phi_3 \begin{pmatrix}
    3 & 4 \\
    8 & 11 \\
\end{pmatrix} = (61, 164)$$

Indeed, as a sanity check we find $\phi_0(68) = 25 \times 185$, $\psi_2(84) = 31 \times 233$, and $\phi_3(164) = 61 \times 449$. Note that in each of these factorizations, the second ("complementary") factor corresponds to the first component when the respective function is applied to the complement matrix

$$c \begin{pmatrix}
    3 & 4 \\
    8 & 11 \\
\end{pmatrix} = \begin{pmatrix}
    11 & 8 \\
    4 & 3 \\
\end{pmatrix}$$

\

Deriving pairs from the same matrix defines a one-to-one correspondence between each of the sets $\mathcal{D}_{\phi_0}$, $\mathcal{D}_{\phi_1}$, $\mathcal{D}_{\psi_2}$, and $\mathcal{D}_{\phi_3}$. At the risk of sounding overly poetic, one could say that $SL_2(\mathbf{N}_0)$ serves as a "highway" for divisibility facts about enumerable polynomials.

\

We give one more application. Denote by $\mathbf{P}$ the set of positive integer primes.

\

\begin{definition}
Let $\mathcal{P}(f) = \{ p \in \mathbf{P} : p \lvert f(n) \text{ for some } n \in \mathbf{N}_0 \}$.
\end{definition}

It is well-known and easy to check that $\mathcal{P}(\phi_0(n)) = \{p \in \mathbf{P}: p \equiv 1 \text{ mod } 4 \} \cup \{2 \}$.

\

Consider $\bar{S}^{\alpha_k} \bar{c}_f \cdots \bar{c}_f \bar{S}^{\alpha_1}\bar{c}_f\bar{S}^{\alpha_0}(1,0)$. What can we say about the sequence of first pair components $(m_i)_{i=0}^k$ induced by the sequence $(\alpha_i)_{i=0}^k$?

\

We consider the sequence defined by $(m_{k}, n_{k}) = \bar{c}_f\bar{S}^{\alpha_{k}}(m_{k-1}, n_{k-1})$ with $(m_0, n_0) := (1, 0)$. Then we have

$$n_{k} = n_{k-1} + \alpha_{k} m_{k-1}$$

$$m_{k} = \frac{|f(n_{k})|}{m_{k-1}}$$

\begin{proposition}\label{recursion}
$$m_{k} = \prod_{i=0}^k |f(n_{k-i})|^{(-1)^{i}}$$
\end{proposition}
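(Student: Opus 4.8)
The plan is to prove this by induction on $k$, using the two scalar recursions $n_k = n_{k-1} + \alpha_k m_{k-1}$ and $m_k = |f(n_k)|/m_{k-1}$ that were extracted just above the statement. The key structural observation is that the second recursion rewrites as $m_k \, m_{k-1} = |f(n_k)|$, an alternating (telescoping-in-exponent) relation: each new factor $|f(n_k)|$ comes paired with a division by the previous $m_{k-1}$, and the claimed formula is exactly what one obtains by unwinding this alternation. So the arithmetic content is entirely contained in the recursion, and the proof is the bookkeeping needed to read it off.

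For the base case $k = 0$, I would note that $(m_0, n_0) = (1, 0)$ forces $m_0 = 1$, while the right-hand side is the single factor $|f(n_0)|^{(-1)^0} = |f(0)|$. Here I would invoke the earlier fact, established in Lemma~\ref{lemma_F(I)=(1,0)} and used repeatedly since, that $|f(0)| = 1$ for enumerable $f$; hence both sides equal $1$. (This same fact also explains why the $i=k$ term $|f(n_0)|^{(-1)^k}$ contributes nothing in general, since $n_0 = 0$.)

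For the inductive step, assume $m_{k-1} = \prod_{i=0}^{k-1} |f(n_{k-1-i})|^{(-1)^i}$. Substituting into $m_k = |f(n_k)|/m_{k-1}$ gives $m_k = |f(n_k)| \cdot \prod_{i=0}^{k-1} |f(n_{k-1-i})|^{(-1)^{i+1}}$. The only genuinely fiddly step is the reindexing: setting $j = i+1$ turns the product into $\prod_{j=1}^{k} |f(n_{k-j})|^{(-1)^j}$, and folding in the leading factor $|f(n_k)| = |f(n_k)|^{(-1)^0}$ yields $\prod_{j=0}^{k} |f(n_{k-j})|^{(-1)^j}$, which is the desired expression. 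I expect this index shift --- keeping the sign exponent aligned with its subscript --- to be the only place where an off-by-one slip could creep in; everything else is formal manipulation, and no obstacle of real depth arises once the recursion is written in the form $m_k m_{k-1} = |f(n_k)|$.
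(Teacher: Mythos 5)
Your proof is correct and takes essentially the same route as the paper, which simply unwinds the recursion $m_k = |f(n_k)|/m_{k-1}$ step by step ("continuing in this manner"); you have merely formalized that unwinding as an induction, with the base case correctly resting on $|f(0)|=1$.
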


\begin{proof}
We know that $m_{k} = \frac{|f(n_k)|}{m_{k-1}}$. Then, recursively, $m_{k+1} = \frac{|f(n_k)|}{\frac{|f(n_{k-1})|}{m_{k-2}}} = \frac{|f(n_k)| m_{k-2}}{|f(n_{k-1})|}$. Continuing in this manner, the proposition follows. 
\end{proof}

\

\begin{proposition}
Let $f \in \mathbf{Z}[x]$ be enumerable. Then for all $p \in \mathcal{P}(f)$ There exist natural numbers $n_0 < n_1 < \cdots < n_k < p$ such that

$$p = \prod_{i=0}^k |f(n_{k-i})|^{(-1)^{i}}$$

In particular, for all $p \equiv 1\text{ mod } 4$, there exist natural numbers $n_0 < n_1 < \cdots < n_k < p$ such that

$$p = \prod_{i=0}^k (n_{k-i}^2 + 1)^{(-1)^{i}}$$
\end{proposition}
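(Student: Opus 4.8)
The plan is to produce, for each prime $p\in\mathcal{P}(f)$, a single pair $(p,n)\in\mathcal{D}_f$ with $n<p$, and then read off the claimed alternating product as the sequence of first components traced out by Proposition~\ref{recursion} along the reduction of $(p,n)$ to the root $(1,0)$. First I would fix $p\in\mathcal{P}(f)$, so that $p\mid f(n)$ for some $n\in\mathbf{N}_0$. Since $f(n+p)\equiv f(n)\ (\mathrm{mod}\ p)$, I may replace $n$ by its least nonnegative residue and assume $0\le n<p$; moreover $n\ge 1$, because $|f(0)|=1$ (Lemma~\ref{lemma_F(I)=(1,0)}) is not divisible by $p$. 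Thus $(p,n)\in\mathcal{D}_f$ with $1\le n<p$.

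Because $f$ is enumerable, $\hat F_f$ is invertible, so Algorithm~\ref{computinginverses} reduces $(p,n)$ to $(1,0)$ through a word in $\bar S^{-1}$ and $\bar c_f$. The point is that $n<p$ forces $\lfloor n/p\rfloor=0$, so the reduction of $(p,n)$ opens with a (trivial) $\bar S^{0}$ immediately followed by $\bar c_f$; equivalently, $(p,n)$ is itself a ``complement juncture,'' the terminal pair $(m_k,n_k)$ of the sequence $(m_i,n_i)=\bar c_f\bar S^{\alpha_i}(m_{i-1},n_{i-1})$, $(m_0,n_0)=(1,0)$, underlying Proposition~\ref{recursion}. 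Hence $m_k=p$ and $n_k=n$, and Proposition~\ref{recursion} gives immediately
$$p=m_k=\prod_{i=0}^{k}|f(n_{k-i})|^{(-1)^{i}}.$$

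The main obstacle, and the step requiring genuine care, is the strict monotonicity $n_0<n_1<\cdots<n_k$ and the bound $n_k<p$. The bound is free, since $n_k=n<p$. For monotonicity I would argue that every interior block exponent satisfies $\alpha_i\ge 1$ for $i=1,\dots,k$: after each application of $\bar c_f$ the dichotomy of Proposition~\ref{maincondition} (one of $m,\ |f(n)|/m$ is $\le n$, the other $>n$) guarantees that the first component has dropped to a value $\le n_i$, so the next $\bar S$-exponent $\lfloor n_i/m\rfloor$ is at least $1$. Since $n_i-n_{i-1}=\alpha_i m_{i-1}$ and $m_{i-1}\ge 1$, this yields $n_{i-1}<n_i$ for every $i$, with no degenerate repeated second component (the apparent $\bar S^{0}$ at the very top belongs to the reduction's bookkeeping, not to a block $\bar c_f\bar S^{\alpha_k}$). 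Finally, the term $i=k$ in the product contributes $|f(n_0)|^{(-1)^k}=|f(0)|^{(-1)^k}=1$, so $n_0=0$ may be discarded, leaving strictly increasing positive integers $n_0<n_1<\cdots<n_k<p$ after relabelling.

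The special case is then just a substitution. Since $\mathcal{P}(\phi_0)=\{p\in\mathbf{P}:p\equiv 1\ (\mathrm{mod}\ 4)\}\cup\{2\}$, every prime $p\equiv 1\ (\mathrm{mod}\ 4)$ lies in $\mathcal{P}(\phi_0)$, and applying the above with $f=\phi_0$ and $|f(n)|=n^2+1$ gives
$$p=\prod_{i=0}^{k}\bigl(n_{k-i}^{2}+1\bigr)^{(-1)^{i}}$$
for some $n_0<n_1<\cdots<n_k<p$, as claimed.
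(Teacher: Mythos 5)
Your proposal is correct and follows the same route as the paper: reduce $(p,n)$ to $(1,0)$ through the $\bar{c}_f\bar{S}^{-\alpha}$ chain and read the alternating product off Proposition~\ref{recursion}. In fact you supply more detail than the paper does --- the normalization $n<p$, the observation that the dichotomy of Proposition~\ref{maincondition} forces every interior $\bar{S}$-exponent to be at least $1$ (hence the strict monotonicity of the $n_i$), and the bound $n_k<p$ --- all of which the paper leaves implicit or defers to Example~\ref{fraction}.
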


\begin{proof}
Recall that $\hat{F}_f: SL_2(\mathbf{N}_0) \rightarrow \mathcal{D}_f$ is an invertible, equivariant map. Then, $p \in \mathcal{P}(f)$ and so $p \lvert f(n)$ for some $n \in \mathbf{N}_0$ meaning $(p, n) \in \mathcal{D}_f$. We know $(p, n) = F(A)$ for some $A \in SL_2(\mathbf{N}_0)$. Then, $F(A) = \bar{S}^{\alpha_k} \bar{c}_f \cdots \bar{c}_f \bar{S}^{\alpha_1}\bar{c}_f\bar{S}^{\alpha_0}(1,0) = (p, n)$. The existence of such naturals then follows from Proposition~\ref{recursion}. The upper bound of $p$ will become obvious when we consider Example~\ref{fraction}.
\end{proof}

\begin{example}\label{fraction}
\end{example}

We compute two such representations of a prime $p \equiv 1 \text{ mod } 4$. We take $p := 113$.

\

Consider the congruence $n^2 + 1 = 0 \text{ mod } 113$

\

This congruence has two solutions, $n = 15$, $98 \text { mod } 113$. Then, we know $(113,15) \in \mathcal{D}_{\phi_0}$ and $(113, 98) \in \mathcal{D}_{\phi_0}$.

Apply the first part of Algorithm~\ref{algorithm} to $(113, 15)$, saving the intermediate pairs

$$(113, 15)$$
$$(2, 15)$$
$$(2, 1)$$
$$(1,1)$$
$$(1,0)$$

Then varying the placement of the $n$ components between the numerator and denominator in the above steps we find

$$113 = \frac{(15^2 + 1)}{(1^2 + 1)}$$

Now do the same for $(113, 98)$.

$$(113, 98)$$
$$(85, 98)$$
$$(85, 13)$$
$$(2, 13)$$
$$(2, 1)$$
$$(1,1)$$
$$(1,0)$$

Varying the $n$ components in the above steps we find

$$113 = \frac{(98^2 + 1)(1^2 + 1)}{(13^2 + 1)}$$

\section{Recursions for the $\Phi_0$ tree and its $\mathcal{S}$-sequence}\label{section_phi0recursions}

We give the first 4 rows of the $\Phi_0$ tree enumeration of $SL_2(\mathbf{N}_0)$, which is obtained either by starting with $(1, 0)$ and applying $\bar{S}$ on the right and $\bar{T}_{\phi_0}$ on the left, or by directly applying the function $\Phi_0: SL_2(\mathbf{N}_0) \rightarrow \mathcal{D}_{\phi_0}$ to the matrix tree of $SL_2(\mathbf{N}_0)$ generated by the matrices $S$ and $T$.

$$
\begin{tikzpicture}[level distance=15mm, 
                    level 1/.style={sibling distance=50mm},
                    level 2/.style={sibling distance=25mm},
                    level 3/.style={sibling distance=13mm}]
  \node {\((1, 0)\)}
    child {
      node {\((1, 1)\)} 
      child {
        node {\((1, 2)\)}
        child {node {\((1, 3)\)}}
        child {node {\((10, 7)\)}}
      }
      child {
        node {\((5, 3)\)}
        child {node {\((5, 8)\)}}
        child {node {\((13, 5)\)}}
      }
    }
    child {
      node {\((2, 1)\)}
      child {
        node {\((2, 3)\)}
        child {node {\((2, 5)\)}}
        child {node {\((13, 8)\)}}
      }
      child {
        node {\((5, 2)\)}
        child {node {\((5, 7)\)}}
        child {node {\((10, 3)\)}}
      } 
    };
\end{tikzpicture}
$$

\begin{remark}\label{landauremark}
Proving that there are infinitely many primes of the form $p = n^2 + 1$ (Conjecture~\ref{landau}), equates to showing that infinitely many $n \in \mathbf{N}$ never appear as the second component of a pair on the interior of the $\Phi_0$ tree. This equivalence follows immediately from Theorem~\ref{main} and the surrounding discussion.
\end{remark}

We study the properties of the $\Phi_0$ tree. Many of the properties we consider extend easily to $\Phi_1, \Psi_2,$ and $\Phi_3$ and can be proved using similar methods. Analogous properties for those trees are given in Section~\ref{section_summary}.

\

We prove several recursions for the row sums and means of $\Phi_0$. Note that we begin indexing rows at $k = 0$.

\

\begin{definition}\label{sumsandavgsdefns}
We define, 

$$M_k(\Phi_0) := \sum_{(m, n) \in \text{row}_{_k}(\Phi_0)} m$$

namely the row sum of the first components

$$N_k(\Phi_0) := \sum_{(m, n) \in \text{row}_{_k}(\Phi_0)} n$$

namely the row sum of the second components

$$R_k(\Phi_0) := \sum_{(m, n) \in \text{row}_{_k}(\Phi_0)} \frac{n}{m}$$

namely the ratio sum of the two components.
\end{definition}

In the following theorem we simply write $M_k$, $N_k,$ and $R_k$ instead of $M_k(\Phi_0)$, $N_k(\Phi_0)$ and $R_k(\Phi_0)$. 

\begin{theorem}\label{phi0recursions}
The following recursions are satisfied

\begin{itemize}
    \item $M_k = 5M_{k-1} - 2M_{k-2} $\ \text{ with initial conditions }$M_0 = 1, \ M_1 = 3$.
    \item $N_k = 5N_{k-1} -  2N_{k-2}$ \text{ with initial conditions }$N_0 = 0, \ N_1 = 2$.
    \item $R_k = R_{k-1} + 3(2^{k-2})$ \text{ with initial condition }$R_0 = 0$.
\end{itemize}
\end{theorem}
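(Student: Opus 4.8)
The plan is to transport everything to matrices using $\hat{F}_{\phi_0} = \Phi_0$ (Theorem~\ref{main}): then $\mathrm{row}_k(\Phi_0)$ is indexed by the $2^k$ length-$k$ words in $S$ and $T$, and the pair sitting at $A = \begin{pmatrix} a & b \\ c & d \end{pmatrix}$ is $(m,n) = (a^2+b^2,\, ac+bd)$. I would track the three quadratic moments $x_A = a^2+b^2$, $y_A = c^2+d^2$, $z_A = ac+bd$, which by Diophantus' identity (Equation~\ref{connection}) satisfy $x_A y_A = z_A^2 + 1$. Writing out $SA$ and $TA$ gives the child rules $(x,y,z)\mapsto(x,\,x+y+2z,\,x+z)$ for the left child and $(x,y,z)\mapsto(x+y+2z,\,y,\,y+z)$ for the right child, which is the computational engine for all three recurrences.

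For $M_k$ and $N_k$ I would sum these contributions over all $A \in \mathrm{row}_{k-1}$ to obtain a linear system in the row sums $M_k = \sum x$, $Q_k := \sum y$, and $N_k = \sum z$. The reflection $c$ is an automorphism interchanging $S$ and $T$ (Proposition~\ref{refl}), so it permutes $\mathrm{row}_k$ within itself while swapping $(a,b)$ with $(d,c)$; this forces $\sum x = \sum y$, i.e. $Q_k = M_k$, collapsing the system to $M_k = 3M_{k-1} + 2N_{k-1}$ and $N_k = 2M_{k-1} + 2N_{k-1}$. The transition matrix $\begin{pmatrix} 3 & 2 \\ 2 & 2 \end{pmatrix}$ has characteristic polynomial $\lambda^2 - 5\lambda + 2$, so by Cayley--Hamilton each coordinate, in particular $M_k$ and $N_k$, obeys $X_k = 5X_{k-1} - 2X_{k-2}$; the initial conditions come from reading off rows $0$ and $1$.

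For $R_k = \sum n/m = \sum z/x$ the left child contributes $z_{SA}/x_{SA} = 1 + z/x$, so the left children total $2^{k-1} + R_{k-1}$, while the right children contribute $W_{k-1} := \sum_{A \in \mathrm{row}_{k-1}} \frac{y_A + z_A}{x_A + y_A + 2z_A}$. The key step is to prove $W_k = 2^{k-1}$. I would evaluate $W_k$ --- which is the sum of $z/x$ over the right children of row $k+1$ --- in two ways: directly it is $\sum_A \frac{y_A + z_A}{x_A + y_A + 2z_A}$, while reflecting by $c$ (which carries each right child $TA$ to the left child $Sc(A)$ and swaps $x \leftrightarrow y$) rewrites the very same sum as $\sum_A \frac{x_A + z_A}{x_A + y_A + 2z_A}$. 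Adding the two forms, the numerators combine into the common denominator and $2W_k = \sum_{A \in \mathrm{row}_k} 1 = 2^k$. Substituting $W_{k-1} = 2^{k-2}$ gives $R_k = R_{k-1} + 2^{k-1} + 2^{k-2} = R_{k-1} + 3 \cdot 2^{k-2}$, with $R_0 = 0$ from the root $(1,0)$.

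I expect the $R_k$ recurrence, and in particular locating the identity $W_k = 2^{k-1}$, to be the main obstacle: the ratio sum is not the coordinate of any finite-dimensional linear recursion, so the direct approach stalls on the awkward term $\frac{y+z}{x+y+2z}$. The decisive idea is that pairing a right child with its mirror image replaces that term by its complement $\frac{x+z}{x+y+2z}$, so the two contributions sum to exactly $1$ per matrix. By contrast, the $M_k$ and $N_k$ recurrences are routine once the $(x,y,z)$ child rules and the symmetry $M_k = Q_k$ are established.
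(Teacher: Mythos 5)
Your proof is correct and is essentially the paper's own argument in matrix coordinates: the paper likewise generates row $k$ from row $k-1$ via the child maps, uses the complement symmetry of each row, and observes that the two ``awkward'' right-child ratios $\frac{y+z}{x+y+2z}$ and $\frac{x+z}{x+y+2z}$ (in its notation, $r(\bar{T}_f(m,n))$ and $r(\bar{T}_f\bar{c}_f(m,n))$) share a denominator and sum to $1$, which is exactly your identity $W_k = 2^{k-1}$. The only difference is presentational --- you phrase things via the invariants $(x,y,z)$ and derive the coupled system plus Cayley--Hamilton for $M_k$ and $N_k$ explicitly, whereas the paper proves only the $R_k$ case in detail and leaves the other two as ``similar.''
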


\begin{proof}

We will prove that $R_k = R_{k-1} + 3(2^{k-2})$ \text{ with initial condition }$R_0 = 0$. The other two recursions are proved in a similar fashion. The initial condition is simply the ratio sum over row $k=0$. Now, let $r(m, n) := \frac{n}{m}$. Then we can write

\begin{align*}
    R_k   &= \sum_{(m, n) \in \text{row}_{_k}(\Phi_0)} \frac{n}{m} \\
    &= \sum_{\substack{
    i = 1 \\ {(m, n) \in \text{row}_{_k}(\Phi_0)} \\}}^{2^k} r(m_i, n_i) \\
    &= \sum_{\substack{
    i = 1 \\ {(m, n) \in \text{row}_{_k}(\Phi_0)} \\}}^{2^{k-1}} r(m_i, n_i) + r\left(\bar{c}_f(m_i, n_i)\right) \\
    &= \sum_{\substack{
    i = 1 \\ {(m, n) \in \text{row}_{_{k-1}}(\Phi_0)} \\}}^{2^{k-2}} r \left(\bar{S}(m_i, n_i)\right) + r\left(\bar{T}_f(m, n)\right) + r \left(\bar{S}(\bar{c}_f(m_i, n_i))\right) + r\left(\bar{T}_f(\bar{c}_f(m_i, n_i))\right) \\
    &= \sum_{\substack{
    i = 1 \\ {(m, n) \in \text{row}_{_{k-1}}(\Phi_0)} \\}}^{2^{k-2}} \frac{m_i + n_i}{m_i} + \frac{n_i + \frac{n_i^2 + 1}{m_i}}{m_i + 2n_i + \frac{n_i^2 + 1}{m_i}} + \frac{n_i + \frac{n_i^2 + 1}{m_i}}{\frac{n_i^2 + 1}{m_i}} + \frac{n_i+m_i}{\frac{n_i^2 + 1}{m_i} + 2n_i + m_i} \\
    &= \sum_{\substack{
    i = 1 \\ {(m, n) \in \text{row}_{_{k-1}}(\Phi_0)} \\}}^{2^{k-2}} \frac{m_i + n_i}{m_i}  + \frac{n_i + \frac{n_i^2 + 1}{m_i}}{\frac{n_i^2 + 1}{m_i}} + 1 \\
    &= \sum_{\substack{
    i = 1 \\ {(m, n) \in \text{row}_{_{k-1}}(\Phi_0)} \\}}^{2^{k-2}} 1 + \frac{n_i}{m_i}  + \frac{n_i}{\frac{n_i^2 + 1}{m_i}} + 1 + 1 \\
    &= \sum_{\substack{
    i = 1 \\ {(m, n) \in \text{row}_{_{k-1}}(\Phi_0)} \\}}^{2^{k-2}} 3 + r(m_i,n_i) + r(\bar{c}_f(m_i, n_i)) \\
    &= 3(2^{k-2}) + \sum_{\substack{
    i = 1 \\ {(m, n) \in \text{row}_{_{k-1}}(\Phi_0)} \\}}^{2^{k-1}} r(m_i,n_i) \\
    &= 3(2^{k-2}) + R_{k-1}\
\end{align*}
\end{proof}

Using matrix diagonalization on the first two recursions and induction on the third, we find the following closed forms

\

\begin{corollary}\label{closed_forms}

\[
M_k(\Phi_0) = 
\frac{1}{34} \left(-\left(\frac{1}{2} \left(5 - \sqrt{17}\right)\right)^k \left(-17 + \sqrt{17}\right) + \left(\frac{1}{2} \left(5 + \sqrt{17}\right)\right)^k \left(17 + \sqrt{17}\right)\right)
\]

\[
N_k(\Phi_0) = 
\frac{2^{1 - k} \left(-\left(5 - \sqrt{17}\right)^k + \left(5 + \sqrt{17}\right)^k\right)}{\sqrt{17}}
\]

\[
R_k(\Phi_0) = \frac{3}{2} \left (2^k - 1 \right)
\]

\end{corollary}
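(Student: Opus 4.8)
The plan is to treat the three recursions separately according to their type. The first two, $M_k = 5M_{k-1} - 2M_{k-2}$ and $N_k = 5N_{k-1} - 2N_{k-2}$, are homogeneous linear recurrences with constant coefficients, so I would solve them by the standard characteristic-root method (equivalently, by diagonalizing the companion matrix $\begin{pmatrix} 5 & -2 \\ 1 & 0 \end{pmatrix}$, as the statement of the corollary suggests). The third, $R_k = R_{k-1} + 3 \cdot 2^{k-2}$, is a first-order inhomogeneous recurrence whose closed form I would obtain either by telescoping it into a geometric sum or by a one-line induction.

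For $M_k$ and $N_k$, first I would write down the common characteristic polynomial $x^2 - 5x + 2$, whose roots are $\lambda_\pm = \frac{1}{2}(5 \pm \sqrt{17})$. Since $\sqrt{17}$ is irrational these roots are distinct, so every solution has the form $A\lambda_+^k + B\lambda_-^k$ with constants $A, B$ determined by the initial data. I would then impose the initial conditions. For $N_k$, the pair $N_0 = 0$, $N_1 = 2$ forces $B = -A$ and $A(\lambda_+ - \lambda_-) = 2$; since $\lambda_+ - \lambda_- = \sqrt{17}$ this gives $A = 2/\sqrt{17}$, and factoring $2^{-k}$ out of each $\lambda_\pm^k$ yields exactly the stated $\frac{2^{1-k}\left(-(5-\sqrt{17})^k + (5+\sqrt{17})^k\right)}{\sqrt{17}}$. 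For $M_k$, the conditions $M_0 = 1$, $M_1 = 3$ give $A + B = 1$ and $A\lambda_+ + B\lambda_- = 3$; solving yields $A\sqrt{17} = 3 - \lambda_- = \frac{1 + \sqrt{17}}{2}$, and after rationalizing one finds $A = \frac{17+\sqrt{17}}{34}$ and $B = \frac{17-\sqrt{17}}{34}$, which is the stated expression once collected over the common denominator $34$.

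For $R_k$, I would unwind the recurrence from $R_0 = 0$ to obtain $R_k = \sum_{j=1}^{k} 3 \cdot 2^{j-2} = \frac{3}{2}\sum_{j=1}^{k} 2^{j-1}$, and the finite geometric identity $\sum_{j=1}^{k} 2^{j-1} = 2^k - 1$ immediately delivers $R_k = \frac{3}{2}(2^k - 1)$. Alternatively a direct induction verifying the step $\frac{3}{2}(2^{k-1} - 1) + 3 \cdot 2^{k-2} = \frac{3}{2}(2^k - 1)$ works just as well.

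There is no genuine obstacle here: the only place that requires care is matching my solved constants to the particular algebraic presentation in the statement — specifically rationalizing $2/\sqrt{17}$ and $\frac{1+\sqrt{17}}{2\sqrt{17}}$, and tracking signs so that $-(-17+\sqrt{17}) = 17 - \sqrt{17}$ lines up with my value of $B$. Everything else is a mechanical consequence of the distinct-roots formula and the geometric-series identity.
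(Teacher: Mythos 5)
Your proposal is correct and matches the paper's approach: the paper derives the first two closed forms by diagonalizing the companion matrix of the recurrence (equivalently, your characteristic-root computation with $\lambda_\pm = \tfrac{1}{2}(5\pm\sqrt{17})$) and the third by induction, exactly as you do. The constants you solve for ($A = \tfrac{17+\sqrt{17}}{34}$, $B = \tfrac{17-\sqrt{17}}{34}$ for $M_k$ and $A = 2/\sqrt{17}$ for $N_k$) agree with the stated formulas.
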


\

\begin{corollary}\label{phi0average}
The limit of the ratio average over the rows of $\Phi_0$

$$\lim_{k \rightarrow \infty} \frac{R_k(\Phi_0)}{2^k} = \frac{3}{2}$$
\end{corollary}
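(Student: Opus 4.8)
The plan is to read the limit directly off the closed form for $R_k(\Phi_0)$ established in Corollary~\ref{closed_forms}. That corollary gives the exact value $R_k(\Phi_0) = \frac{3}{2}\left(2^k - 1\right)$, so dividing by $2^k$ yields
$$\frac{R_k(\Phi_0)}{2^k} = \frac{3}{2}\left(1 - 2^{-k}\right).$$
As $k \to \infty$ the term $2^{-k}$ tends to $0$, and the limit $\frac{3}{2}$ follows immediately. There is essentially no obstacle at this stage: all of the substantive work was already carried out in deriving the recursion $R_k = R_{k-1} + 3 \cdot 2^{k-2}$ in Theorem~\ref{phi0recursions} and solving it in closed form in Corollary~\ref{closed_forms}, so the present statement is a one-line consequence.

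Should one prefer to sidestep the closed form and argue straight from the recursion, I would set $a_k := R_k(\Phi_0)/2^k$ and divide $R_k = R_{k-1} + 3 \cdot 2^{k-2}$ by $2^k$ to obtain the affine recursion $a_k = \frac{1}{2}a_{k-1} + \frac{3}{4}$. Its unique fixed point satisfies $a^\ast = \frac{1}{2}a^\ast + \frac{3}{4}$, i.e. $a^\ast = \frac{3}{2}$, and because the homogeneous multiplier $\frac{1}{2}$ has modulus strictly less than $1$, the sequence $(a_k)$ converges to $a^\ast$ irrespective of its starting value, again giving $\frac{3}{2}$. Both routes reach the conclusion at once; the direct substitution of the closed form is the shorter, and is the one I would record.
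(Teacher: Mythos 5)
Your proposal is correct and matches the paper's (implicit) argument: Corollary~\ref{phi0average} is stated as an immediate consequence of the closed form $R_k(\Phi_0) = \frac{3}{2}(2^k - 1)$ from Corollary~\ref{closed_forms}, exactly as you compute. The alternative fixed-point argument from the recursion is a fine remark but adds nothing beyond the one-line substitution.
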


Let us now define the Calkin-Wilf tree.

\begin{definition}
The Calkin-Wilf Tree, as introduced in \cite{CW}, is generated by the pair maps $\bar{L}:(a, b) \rightarrow (a, a+b)$ (left child map) and $\bar{R}: (a, b) \rightarrow (a+b, b)$ (right child map), beginning with the pair $(1,1)$

$$
\begin{tikzpicture}[level distance=15mm, 
                    level 1/.style={sibling distance=50mm},
                    level 2/.style={sibling distance=25mm},
                    level 3/.style={sibling distance=13mm}]
  \node {\((1, 1)\)}
    child {
      node {\((1, 2)\)} 
      child {
        node {\((1, 3)\)}
        child {node {\((1, 4)\)}}
        child {node {\((4, 3)\)}}
      }
      child {
        node {\((3, 2)\)}
        child {node {\((3, 5)\)}}
        child {node {\((5, 2)\)}}
      }
    }
    child {
      node {\((2, 1)\)}
      child {
        node {\((2, 3)\)}
        child {node {\((2, 5)\)}}
        child {node {\((5, 3)\)}}
      }
      child {
        node {\((3, 1)\)}
        child {node {\((3, 4)\)}}
        child {node {\((4, 1)\)}}
      } 
    };
\end{tikzpicture}$$

We denote the Calkin-Wilf Tree as $\mathcal{CW}$.
\end{definition}

\begin{remark}
The Calkin-Wilf tree has the same limiting ratio average over rows as $\Phi_0$ (\cite{CW_means}, Theorem 1)

$$\lim_{k \rightarrow \infty} \frac{R_k(\Phi_0)}{2^k} = \lim_{k \rightarrow \infty} \frac{R_k(\mathcal{CW})}{2^k} = \frac{3}{2}$$
\end{remark}

\begin{remark}\label{calkinwilfrationals}
If we view the pairs $(a, b)$ on $\mathcal{CW}$ as fractions $\frac{a}{b}$, then the Calkin-Wilf tree read-off left to right, row by row, enumerates the positive rationals $\mathbf{Q}_{>0}$, meaning each $q \in \mathbf{Q}_{>0}$ appears exactly once in the sequence

$$\left\{ \frac{1}{1}, \frac{1}{2}, \frac{2}{1}, \frac{1}{3}, \frac{3}{2}, \frac{2}{3}, \cdots \right\}$$

A simple proof of this fact can be found in \cite{CW}.
\end{remark}

Calkin and Wilf also proved that the integer sequence of first components $\{ a(n) \}_{n \in \mathbf{N}}$ of $\mathcal{CW}$ read off row-by-row is given by the recursion 

$$\begin{cases}
a(2n) = a(n) \\
a(2n+1) = a(n) + a(n+1) \\
\end{cases}$$

with initial conditions $a(1) = 0$ and $a(2) = 1$, and with the initial first component on $\mathcal{CW}$ taken to be $a(2)$. The sequence $\{ a(n) \}_{n \in \mathbf{N}}$ is known as Stern's diatomic sequence (OEIS A002487) and has a number of remarkable properties, some of which are given in \cite{CW}. Moreover, the pairs on $\mathcal{CW}$ can be generated row-by-row by running over the sequence $a$ as

$$\left(a(n), a(n+1)\right)$$

Our goal is to find an integer sequence that  analogously generates the divisor pair tree $\Phi_0$.

\begin{definition}
Let $\mathcal{S}(k)$ denote the integer sequence which results from reading off the second components of $\Phi_0$. From the first $4$ rows of the second components

$$
\begin{tikzpicture}[level distance=1.5cm,
  level 1/.style={sibling distance=4cm},
  level 2/.style={sibling distance=2cm},
  level 3/.style={sibling distance=1cm}]
  \node {$0$}
    child {node {$1$}
      child {node {$2$}
        child {node {$3$}}
        child {node {$7$}}
      }
      child {node {$3$}
        child {node {$8$}}
        child {node {$5$}}
      }
    }
    child {node {$1$}
      child {node {$3$}
        child {node {$5$}}
        child {node {$8$}}
      }
      child {node {$2$}
        child {node {$7$}}
        child {node {$3$}}
      }
    };
\end{tikzpicture}
$$

we obtain

$$\{ \mathcal{S}(k) \}_{k \in \mathbf{N}} = \{0,1,1,2,3,3,2,3,7,8,5,5,8,7,3, \cdots \}$$
\end{definition}

\begin{remark}
Note that the boundary of the second component tree corresponds to the $\mathcal{S}$-sequence values $\mathcal{S}(2^n)$ on the left and  $\mathcal{S}(2^{n+1} - 1)$ on the right, for $n \in \mathbf{N}$.
\end{remark}

\begin{proposition}\label{n_components_net}
The following "net" recursively generates the second components of $\Phi_0$

$$\begin{tikzpicture}[level distance=1.5cm,
  level 1/.style={sibling distance=3cm},
  level 2/.style={sibling distance=1.5cm}]
  \node {$a$}
    child {node {$b$}
      child {node {$2b-a$}}
      child {node {$2b+c$}}
    }
    child {node {$c$}
      child {node {$2c+b$}}
      child {node {$2c-a$}}
    };
\end{tikzpicture}$$
The generation process is as follows:
we begin with initial conditions $(a, b, c) = (0, 1, 1)$. On each row $k$ we apply the net to each node and its two children on row $k+1$ to generate the row $k+2$. 
\end{proposition}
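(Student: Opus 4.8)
The plan is to reduce the statement to a single \emph{local} claim about one generic node of the $\Phi_0$ tree together with its two children, and then to check that the four grandchildren are given by the formulas in the net. Since the net produces row $k+2$ from rows $k$ and $k+1$, and since the root $(1,0)$ has second component $0$ while its children $\bar{S}(1,0)=(1,1)$ and $\bar{T}_{\phi_0}(1,0)=(2,1)$ have second components $1$ and $1$, the stated initial data $(a,b,c)=(0,1,1)$ is exactly the second-component content of rows $0$ and $1$. So it is enough to prove that for an arbitrary node, knowing its second component $a$ and the two second components $b,c$ of its children determines the four grandchildren as $2b-a,\ 2b+c$ (children of $b$) and $2c+b,\ 2c-a$ (children of $c$).

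First I would fix a node $P=(m,a)\in\mathcal{D}_{\phi_0}$ and recover the hidden first components from the second components. The left child is $\bar{S}(P)=(m,a+m)$, so $b=a+m$; the right child $\bar{T}_{\phi_0}(P)=\bar{c}_{\phi_0}\bar{S}\bar{c}_{\phi_0}(P)$ has second component $a+m'$ with $m':=\phi_0(a)/m$ (a positive integer, since $m\mid\phi_0(a)$ and $\phi_0(a)=a^2+1>0$ force $m\ge 1$), so $c=a+m'$. Hence $m=b-a$ and $m'=c-a$, and the defining factorization $m\,m'=\phi_0(a)=a^2+1$ becomes the single identity
$$(b-a)(c-a)=a^2+1, \qquad \text{equivalently} \qquad bc-a(b+c)=1,$$
which I will call $(\star)$. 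This relation, which merely re-encodes $\phi_0(a)=m\,m'$, is the tool that eliminates the first components.

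Next I would compute the first component $m_R$ of the right child $R=\bar{T}_{\phi_0}(P)$: unwinding the definition gives $m_R=\bigl((a+m')^2+1\bigr)/m'$, and using $a^2+1=m\,m'$ this collapses to $m_R=m+2a+m'=(b-a)+2a+(c-a)=b+c$. Now everything follows from two facts about the tree: the left child of a node adds that node's first component to its second component, and the right child of $(\mu,\nu)$ has second component $\nu+\phi_0(\nu)/\mu$. Applying these to $L=(b-a,\,b)$ gives $b+(b-a)=2b-a$ and $b+(b^2+1)/(b-a)$; since $(\star)$ yields $(b-a)(b+c)=b^2+1$, the second value is $b+(b+c)=2b+c$. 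Applying them to $R=(b+c,\,c)$ gives $c+(b+c)=2c+b$ and $c+(c^2+1)/(b+c)$; since $(\star)$ yields $(b+c)(c-a)=c^2+1$, the second value is $c+(c-a)=2c-a$. These are precisely the four entries of the net.

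Finally I would close with the bookkeeping that the local claim generates the whole tree: reading rows left to right, the two children of the $i$-th node of row $k$ occupy positions $2i-1,2i$ of row $k+1$, and their four children occupy positions $4i-3,\dots,4i$ of row $k+2$, so sweeping the net across all of row $k$ reproduces row $k+2$ exactly once and in order. I expect the genuine obstacle to be the elimination of the first components, i.e. isolating the relation $(\star)$ and the identity $m_R=b+c$; once those are established the four grandchild formulas are immediate substitutions, and the row-covering argument is routine.
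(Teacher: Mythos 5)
Your proof is correct and follows essentially the same route as the paper: both reduce the statement to a local computation on a generic node $(m,a)$ and its images under $\bar{S}$ and $\bar{T}_{\phi_0}$. The paper leaves the algebra as ``one can verify,'' whereas you actually carry it out, and your relation $(\star)$ together with the identity $m_R=b+c$ is exactly the computation being waved at there.
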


\begin{proof}
We denote by $n_S, \ n_T, \ n_{S^2}, \ n_{TS}, \ n_{ST}, \ n_{T^2}$ the second components of the image pairs $\bar{S}(m, n), \ \bar{T}_{\phi_0}(m, n), \ \bar{S}^2(m, n), \ \bar{T}_{\phi_0}\left(\bar{S}(m, n)\right), \ \bar{S}\left(\bar{T}_{\phi_0}(m, n)\right), \ \bar{T}^2_{\phi_0}(m, n)$ respectively. Then, beginning with an arbitrary second component $n$, we get that its children and grandchildren will be

$$\begin{tikzpicture}[level distance=1.5cm,
  level 1/.style={sibling distance=3cm},
  level 2/.style={sibling distance=1.5cm}]
  \node {$n$}
    child {node {$n_S$}
      child {node {$n_{S^2}$}}
      child {node {$n_{TS}$}}
    }
    child {node {$n_T$}
      child {node {$n_{ST}$}}
      child {node {$n_{T^2}$}}
    };
\end{tikzpicture}$$

After computing all of the image pairs, $$\bar{S}(m, n), \ \bar{T}_{\phi_0}(m, n), \ \bar{S}^2(m, n), \ \bar{T}_{\phi_0}\left(\bar{S}(m, n)\right), \ \bar{S}\left(\bar{T}_{\phi_0}(m, n)\right), \ \bar{T}^2_{\phi_0}(m, n)$$ and writing out their second components, one can verify that the last row of the net can be restated in terms of linear combinations of values on the previous two rows as

$$\begin{tikzpicture}[level distance=1.5cm,
  level 1/.style={sibling distance=5cm},
  level 2/.style={sibling distance=2cm}]
  \node {$n$}
    child {node {$n_S$}
      child {node {$2n_S -  n$}}
      child {node {$2n_S + n_T$}}
    }
    child {node {$n_T$}
      child {node {$2n_T + n_S$}}
      child {node {$2n_T - n$}}
    };
\end{tikzpicture}$$

Rewriting $n, \ n_S, \ n_T$ as $a, \ b, \ c$ respectively, we get the claim of the proposition.
\end{proof}

To reiterate, the $\Phi_0$ tree of divisor pairs has the properties

\begin{itemize}
    \item Every integer $n \in \mathbf{N}_0$ appears on the tree exactly $\tau(n^2 + 1)$ times. 
    \item An integer $n \in \mathbf{N}_0$ is absent from the interior of the tree if and only if $n^2 + 1$ is prime.
\end{itemize}

By using the recursion in Proposition~\ref{n_components_net}, we can generate $\Phi_0$ using two linear maps.

\

We want a matrix $L$ (left-child matrix) that sends

\

$\begin{pmatrix}
a \\
b \\
c \\
\end{pmatrix} \rightarrow \begin{pmatrix}
b \\
2b-a \\
2b+c \\
\end{pmatrix}$

\

and a matrix $R$ (right-child matrix) that sends

\

$\begin{pmatrix}
a \\
b \\
c \\
\end{pmatrix} \rightarrow \begin{pmatrix}
c \\
2c+b \\
2c-a \\
\end{pmatrix}$

\

These conditions are satisfied by the matrices

$$L = 
\begin{pmatrix}
0 & 1 & 0 \\
-1 & 2 & 0 \\
0 & 2 & 1 \\
\end{pmatrix}
$$

$$R = \begin{pmatrix}
0 & 0 & 1 \\
0 & 1 & 2 \\
-1 & 0 & 2 \\
\end{pmatrix}$$

\begin{remark}
$L, R \in SL_3(\mathbf{Z})$ and are  mutual conjugates via the matrix

$$\begin{pmatrix}
1 & 0 & 0 \\
0 & 0 & 1 \\
0 & 1 & 0 \\
\end{pmatrix}$$

which has determinant $-1$.
\end{remark}

Beginning with the initial condition vector $
= \begin{pmatrix}
0 \\
1 \\
1 \\
\end{pmatrix}$ we compose by $L$ on the left and by $R$ on the right. This yields the vector tree for $\Phi_0$ (first $3$ rows)

\

$$\begin{tikzpicture}[level distance=2.5cm,
  level 1/.style={sibling distance=6cm},
  level 2/.style={sibling distance=3cm}]
  \node {$\begin{pmatrix} 0 \\ 1 \\ 1 \end{pmatrix}$}
    child {node {$\begin{pmatrix} 1 \\ 2 \\ 3 \end{pmatrix}$}
      child {node {$\begin{pmatrix} 2 \\ 3 \\ 7 \end{pmatrix}$}}
      child {node {$\begin{pmatrix} 3 \\ 5 \\ 8 \end{pmatrix}$}}
    }
    child {node {$\begin{pmatrix} 1 \\ 3 \\ 2 \end{pmatrix}$}
      child {node {$\begin{pmatrix} 3 \\ 8 \\ 5 \end{pmatrix}$}}
      child {node {$\begin{pmatrix} 2 \\ 7 \\ 3 \end{pmatrix}$}}
    };
\end{tikzpicture}$$

\

One sees that the vectors on this tree all have the form

$$\begin{pmatrix}
n \\
n_S \\ 
n_T \\
\end{pmatrix} = \begin{pmatrix}
n \\
n+m \\ 
n + \frac{n^2+1}{m}  \\
\end{pmatrix}$$

Therefore, one easily recovers the tree $\Phi_0$ by sending each vector

$$\begin{pmatrix}
a \\
b \\ 
c \\
\end{pmatrix} \rightarrow 
(b-a, a) \in \mathcal{D}_{\phi_0}$$

Note that the top component of each vector runs over $\mathcal{S}(k)$ when reading off the sequence of vectors row-by-row. In fact, each vector will have the form

$$\mathbf{v} = \begin{pmatrix}
\mathcal{S}(k) \\
\mathcal{S}(2k) \\ 
\mathcal{S}(2k+1) \\
\end{pmatrix}$$

\

while its left and right children will have the forms

$$\begin{pmatrix}
\mathcal{S}(2k) \\
\mathcal{S}(4k) \\ 
\mathcal{S}(4k+1) \\
\end{pmatrix} \text{ and } \begin{pmatrix}
\mathcal{S}(2k+1) \\
\mathcal{S}(4k+2) \\ 
\mathcal{S}(4k+3) \\
\end{pmatrix}$$

respectively. Solving the equations

$$L\mathbf{v} = \begin{pmatrix}
\mathcal{S}(2k) \\
\mathcal{S}(4k) \\ 
\mathcal{S}(4k+1) \\
\end{pmatrix} \text{ and } R\mathbf{v} = \begin{pmatrix}
\mathcal{S}(2k+1) \\
\mathcal{S}(4k+2) \\ 
\mathcal{S}(4k+3)
\end{pmatrix}$$

yields the set of recursions for $\mathcal{S}(k)$

$$
\begin{cases}
\mathcal{S}(4k) = 2\mathcal{S}(2k) - \mathcal{S}(k) \\
\mathcal{S}(4k+1) = 2\mathcal{S}(2k) + \mathcal{S}(2k+1) \\
\mathcal{S}(4k+2) =  2\mathcal{S}(2k+1) + \mathcal{S}(2k) \\
\mathcal{S}(4k+3) =  2\mathcal{S}(2k+1) - \mathcal{S}(k)  \\
\end{cases}
$$

\

with initial conditions $\mathcal{S}(1) = 0$, $\mathcal{S}(2) = 1$, $\mathcal{S}(3) = 1$. 

\

One may also ask about the sequence that results from reading off the first components  from the divisor pair tree $\Phi_0$. Since the difference $b-a$ of the second and first vector components gave us the first divisor pair component $m$, and the vectors each had the form

$$\mathbf{v} = \begin{pmatrix}
\mathcal{S}(k) \\
\mathcal{S}(2k) \\ 
\mathcal{S}(2k+1) \\
\end{pmatrix}$$

we find that the first components are given by the relation $\mathcal{S}(2k) - \mathcal{S}(k)$ for all $k \in \mathbf{N}$. Thus, reading off the divisor pair tree $\Phi_0$ row-by-row the $k$th pair will be $\left( \mathcal{S}(2k) - \mathcal{S}(k), \mathcal{S}(k) \right)$ for $k \in \mathbf{N}$.

\

We define $k$-regular sequences, as introduced by Allouche and Shallit \cite{q_regular_sequences}. There are many equivalent definitions of $k$-regular sequences. The one we state allows us to immediately conclude that $\mathcal{S}$ is a $2$-regular sequence

\begin{definition}
A sequence \(s(n)\) is \(k\)-regular if there exists an integer \(E\) such that, for all \(e_j > E\) and \(0 \leq r_j \leq k^{e_j} - 1\), every subsequence of \(s\) of the form \(s(k^{e_j}n + r_j)\) is expressible as a linear combination
\[
\sum_{i}c_{ij}s(k^{f_{ij}}n+b_{ij}),
\]
where \(c_{ij}\) is an integer, \(f_{ij} \leq E\), and \(0 \leq b_{ij} \leq k^{f_{ij}} - 1.\)
\end{definition}

The asymptotic analysis of $k$-regular sequences is an area that has undergone significant development in recent years \cite{Asymptotics}, \cite{Asymptotics2}, \cite{Asymptotics3}.

\section{Summary of enumerable polynomials}\label{section_summary}

The purpose of this section is to record the most important properties of each enumerable polynomial in a single place. Recall that each polynomial's divisor pair tree was initially obtained by applying the equivariant, invertible map $\hat{F}_f$ to the $SL_2(\mathbf{N}_0)$ matrix tree generated by $S$ and $T$

$$\begin{tikzpicture}[level distance=15mm, 
                    level 1/.style={sibling distance=50mm},
                    level 2/.style={sibling distance=25mm},
                    level 3/.style={sibling distance=13mm}]
  \node {\(\begin{pmatrix}1 & 0 \\ 0 & 1\end{pmatrix}\)}
    child {
      node {\(\begin{pmatrix}1 & 0 \\ 1 & 1\end{pmatrix}\)} 
      child {
        node {\(\begin{pmatrix}1 & 0 \\ 2 & 1\end{pmatrix}\)}
        child {node {\(\begin{pmatrix}1 & 0 \\ 3 & 1\end{pmatrix}\)}}
        child {node {\(\begin{pmatrix}3 & 1 \\ 2 & 1\end{pmatrix}\)}}
      }
      child {
        node {\(\begin{pmatrix}2 & 1 \\ 1 & 1\end{pmatrix}\)}
        child {node {\(\begin{pmatrix}2 & 1 \\ 3 & 2\end{pmatrix}\)}}
        child {node {\(\begin{pmatrix}3 & 2 \\ 1 & 1\end{pmatrix}\)}}
      }
    }
    child {
      node {\(\begin{pmatrix}1 & 1 \\ 0 & 1\end{pmatrix}\)}
      child {
        node {\(\begin{pmatrix}1 & 1 \\ 1 & 2\end{pmatrix}\)}
        child {node {\(\begin{pmatrix}1 & 1 \\ 2 & 3\end{pmatrix}\)}}
        child {node {\(\begin{pmatrix}2 & 3 \\ 1 & 2\end{pmatrix}\)}}
      }
      child {
        node {\(\begin{pmatrix}1 & 2 \\ 0 & 1\end{pmatrix}\)}
        child {node {\(\begin{pmatrix}1 & 2 \\ 1 & 3\end{pmatrix}\)}}
        child {node {\(\begin{pmatrix}1 & 3 \\ 0 & 1\end{pmatrix}\)}}
      } 
    };
\end{tikzpicture}$$

 We reuse the notation $\mathcal{S}$ for every sequence generating the corresponding divisor pair tree. It should be clear from the context which polynomial's $\mathcal{S}$-sequence we are referring to. Note that for each $f \in \{\phi_0, \phi_1, \psi_2, \phi_3 \}$ the $\mathcal{S}$-sequences have the properties

\begin{itemize}
    \item $\mathcal{S}$ is $2$-regular.
    \item $f(n)$ is prime if and only if $\mathcal{S}^{-1}(\{n\}) = \{2^n, 2^{n+1} - 1 \}$.
     \item $|\mathcal{S}^{-1}(\{n \})| = \tau\left(f(n)\right)$.
     \item the $k$th pair of the divisor pair tree read off row-by-row is given by $\left( \mathcal{S}(2k) - \mathcal{S}(k), \ \mathcal{S}(k) \right)$ where $k \in \mathbf{N}$.
 \end{itemize}
 
\subsection{$\phi_0(n) = n^2 + 1$}

\textbf{Invertible, equivariant map}

$$\Phi_0 \begin{pmatrix}
    a & b \\
    c & d \\
\end{pmatrix} = (a^2 + b^2, ac + bd)$$

\textbf{Divisor pair tree}

$$
\begin{tikzpicture}[level distance=15mm, 
                    level 1/.style={sibling distance=50mm},
                    level 2/.style={sibling distance=25mm},
                    level 3/.style={sibling distance=13mm}]
  \node {\((1, 0)\)}
    child {
      node {\((1, 1)\)} 
      child {
        node {\((1, 2)\)}
        child {node {\((1, 3)\)}}
        child {node {\((10, 7)\)}}
      }
      child {
        node {\((5, 3)\)}
        child {node {\((5, 8)\)}}
        child {node {\((13, 5)\)}}
      }
    }
    child {
      node {\((2, 1)\)}
      child {
        node {\((2, 3)\)}
        child {node {\((2, 5)\)}}
        child {node {\((13, 8)\)}}
      }
      child {
        node {\((5, 2)\)}
        child {node {\((5, 7)\)}}
        child {node {\((10, 3)\)}}
      } 
    };
\end{tikzpicture}
$$

$\mathcal{S}$-\textbf{sequence}

\

For $k \in \mathbf{N}$,

$$
\begin{cases}
\mathcal{S}(4k) = 2\mathcal{S}(2k) - \mathcal{S}(k) \\
\mathcal{S}(4k+1) = 2\mathcal{S}(2k) + \mathcal{S}(2k+1) \\
\mathcal{S}(4k+2) =  2\mathcal{S}(2k+1) + \mathcal{S}(2k) \\
\mathcal{S}(4k+3) =  2\mathcal{S}(2k+1) - \mathcal{S}(k)  \\
\end{cases}
$$

with initial conditions $\mathcal{S}(1) = 0$, $\mathcal{S}(2) = 1$, $\mathcal{S}(3) = 1$.

\

\subsection{$\phi_1(n) = n^2 + n + 1$}

\textbf{Invertible, equivariant map}

$$\Phi_1 \begin{pmatrix}
    a & b \\
    c & d \\
\end{pmatrix} = (a^2 + ab + b^2, ac + bc + bd)$$

\textbf{Divisor pair tree:}

$$
\begin{tikzpicture}[level distance=15mm, 
                    level 1/.style={sibling distance=50mm},
                    level 2/.style={sibling distance=25mm},
                    level 3/.style={sibling distance=13mm}]
  \node {\((1, 0)\)}
    child {
      node {\((1, 1)\)} 
      child {
        node {\((1, 2)\)}
        child {node {\((1, 3)\)}}
        child {node {\((13, 9)\)}}
      }
      child {
        node {\((7, 4)\)}
        child {node {\((7, 11)\)}}
        child {node {\((19, 7)\)}}
      }
    }
    child {
      node {\((3, 1)\)}
      child {
        node {\((3, 4)\)}
        child {node {\((3, 7)\)}}
        child {node {\((19, 11)\)}}
      }
      child {
        node {\((7, 2)\)}
        child {node {\((7, 9)\)}}
        child {node {\((13, 3)\)}}
      } 
    };
\end{tikzpicture}
$$

$\mathcal{S}$-\textbf{sequence}

\

For $k \in \mathbf{N}$,

$$
\begin{cases}
\mathcal{S}(4k) = 2\mathcal{S}(2k) - \mathcal{S}(k) \\
\mathcal{S}(4k+1) = 2\mathcal{S}(2k) + \mathcal{S}(2k+1) + 1 \\
\mathcal{S}(4k+2) =  2\mathcal{S}(2k+1) + \mathcal{S}(2k) + 1 \\
\mathcal{S}(4k+3) =  2\mathcal{S}(2k+1) - \mathcal{S}(k)  \\
\end{cases}
$$
with initial conditions $\mathcal{S}(1) = 0$, $\mathcal{S}(2) = 1$, $\mathcal{S}(3) = 1$.

\

\subsection{$\psi_2(n) = n^2 + 2n - 1$}

\textbf{Invertible, equivariant map}

$$\Psi_2 \begin{pmatrix}
    a & b \\
    c & d \\
\end{pmatrix} = 
(\max \{a, b\}^2 + 2ab - \min \{a, b \}^2,  \ \max \{ac, bd \} + 2bc - \min \{ac, bd\})$$

\textbf{Divisor pair tree:}

$$
\begin{tikzpicture}[level distance=15mm, 
                    level 1/.style={sibling distance=50mm},
                    level 2/.style={sibling distance=25mm},
                    level 3/.style={sibling distance=13mm}]
  \node {\((1, 0)\)}
    child {
      node {\((1, 1)\)} 
      child {
        node {\((1, 2)\)}
        child {node {\((1, 3)\)}}
        child {node {\((14, 9)\)}}
      }
      child {
        node {\((7, 3)\)}
        child {node {\((7, 10)\)}}
        child {node {\((17, 5)\)}}
      }
    }
    child {
      node {\((2, 1)\)}
      child {
        node {\((2, 3)\)}
        child {node {\((2, 5)\)}}
        child {node {\((17, 10)\)}}
      }
      child {
        node {\((7, 2)\)}
        child {node {\((7, 9)\)}}
        child {node {\((14, 3)\)}}
      } 
    };
\end{tikzpicture}
$$

$\mathcal{S}$-\textbf{sequence}

\

For $k \geq 2$,

$$
\begin{cases}
\mathcal{S}(4k) = 2\mathcal{S}(2k) - \mathcal{S}(k) \\
\mathcal{S}(4k+1) = 2\mathcal{S}(2k) + \mathcal{S}(2k+1) + 2 \\
\mathcal{S}(4k+2) =  2\mathcal{S}(2k+1) + \mathcal{S}(2k) + 2 \\
\mathcal{S}(4k+3) =  2\mathcal{S}(2k+1) - \mathcal{S}(k)  \\
\end{cases}
$$

with initial conditions $\mathcal{S}(1) = 0$, $\mathcal{S}(2) = 1$, $\mathcal{S}(3) = 1$, $\mathcal{S}(4) = 2$, $\mathcal{S}(5) = 3$, $\mathcal{S}(6) = 3$, $\mathcal{S}(7) = 2$.

\begin{remark}
The reason we begin the recursion at $k=2$ is that $\psi_2(0) = -1$ instead of $1$, which initially alters the net generating the second component tree.
\end{remark}

\subsection{$\phi_3(n) = n^2 + 3n + 1$}

\textbf{Invertible, equivariant map}

$$\Phi_3 \begin{pmatrix}
    a & b \\
    c & d \\
\end{pmatrix} = (a^2 + 3ab + b^2, ac + 3bc + bd)$$

\textbf{Divisor pair tree:}

$$
\begin{tikzpicture}[level distance=15mm, 
                    level 1/.style={sibling distance=50mm},
                    level 2/.style={sibling distance=25mm},
                    level 3/.style={sibling distance=13mm}]
  \node {\((1, 0)\)}
    child {
      node {\((1, 1)\)} 
      child {
        node {\((1, 2)\)}
        child {node {\((1, 3)\)}}
        child {node {\((19, 13)\)}}
      }
      child {
        node {\((11, 6)\)}
        child {node {\((11, 17)\)}}
        child {node {\((31, 11)\)}}
      }
    }
    child {
      node {\((5, 1)\)}
      child {
        node {\((5, 6)\)}
        child {node {\((5, 11)\)}}
        child {node {\((31, 17)\)}}
      }
      child {
        node {\((11, 2)\)}
        child {node {\((11, 13)\)}}
        child {node {\((19, 3)\)}}
      } 
    };
\end{tikzpicture}
$$

$\mathcal{S}$-\textbf{sequence}

\

For $k \in \mathbf{N}$,

$$
\begin{cases}
\mathcal{S}(4k) = 2\mathcal{S}(2k) - \mathcal{S}(k) \\
\mathcal{S}(4k+1) = 2\mathcal{S}(2k) + \mathcal{S}(2k+1) + 3 \\
\mathcal{S}(4k+2) =  2\mathcal{S}(2k+1) + \mathcal{S}(2k) + 3 \\
\mathcal{S}(4k+3) =  2\mathcal{S}(2k+1) - \mathcal{S}(k)  \\
\end{cases}
$$

with initial conditions $\mathcal{S}(1) = 0$, $\mathcal{S}(2) = 1$, $\mathcal{S}(3) = 1$.

\

\

\

\textit{Acknowledgements.} I want to thank my advisors Dr. Ingalls and Dr. Logan whose insightful comments and suggestions made this project possible. I also want to thank Christian Kudeba for the many fruitful discussions.


\begin{thebibliography}{9}

\bibitem{CW}
N. Calkin and H. Wilf, "Recounting the rationals," \textit{Amer. Math. Monthly}, vol. 107, no. 4, pp. 360-363, 2000.

\bibitem{nathanson2014forest}
M. B. Nathanson, "A forest of linear fractional transformations," 2014, \textit{arXiv preprint arXiv:1401.0012}.

\bibitem{carlslemma}
C. Schildkraut, "Proving for every $f \in \mathbf{Z}[x]$ there will be composite $f(n)$ with only "large" divisors," \textit{Mathematics Stack Exchange}, 2024-02-18. [Online]. Available: \url{https://math.stackexchange.com/q/4864954}.

\bibitem{CW_means}
S. Han, A. M. Masuda, S. Singh, and J. Thiel, ``Mean Row Values in $(u, v)$-Calkin-Wilf Trees,'' in \emph{Springer Proceedings in Mathematics \& Statistics}, Springer International Publishing, Dec. 2019, pp. 133--146. DOI: \url{10.1007/978-3-030-31106-3_10}.

\bibitem{q_regular_sequences}
J.P. Allouche and J. Shallit, ``The ring of k-regular sequences,'' \emph{Theoretical Computer Science}, vol. 98, no. 2, pp. 163--197, 1992. DOI: \url{https://doi.org/10.1016/0304-3975(92)90001-V}.

\bibitem{Asymptotics}
C. Heuberger and D. Krenn, ``Asymptotic Analysis of Regular Sequences,'' \emph{Algorithmica}, vol. 82, no. 3, pp. 429--508, Oct. 2019. DOI: \url{10.1007/s00453-019-00631-3}.

\bibitem{Asymptotics2}
C. Heuberger, D. Krenn, and G. F. Lipnik, ``Asymptotic Analysis of $q$-Recursive Sequences,'' \emph{Algorithmica}, vol. 84, no. 9, pp. 2480--2532, May 2022. DOI: \url{10.1007/s00453-022-00950-y}.

\bibitem{Asymptotics3}
D. Krenn and J. Shallit, ``Strongly $k$-recursive sequences,'' 2024. [Online]. Available: \url{https://arxiv.org/abs/2401.14231}.

\end{thebibliography}
\end{document}